\documentclass[AMS]{article} 

\usepackage{xcolor}
\usepackage{geometry} 
\usepackage{graphicx} 

\usepackage{booktabs} 
\usepackage{array} 
\usepackage{paralist} 
\usepackage{verbatim} 
\usepackage{subfig} 

\usepackage{fancyhdr} 
\usepackage{amssymb}
\usepackage{amsmath}
\usepackage{epstopdf}
\usepackage[T1]{fontenc}
\usepackage{t1enc}
\usepackage[utf8]{inputenc}

\usepackage{sectsty}
\allsectionsfont{\sffamily\mdseries\upshape} 

\usepackage{hyperref}

\usepackage[nottoc,notlof,notlot]{tocbibind} 
\usepackage[titles,subfigure]{tocloft} 

\newtheorem{theorem}{Theorem}[section]
\newtheorem{lemma}[theorem]{Lemma}
\newtheorem{obs}[theorem]{Observation}
\newtheorem{fact}[theorem]{Fact}
\newtheorem{cor}[theorem]{Corollary}

\newtheorem{claim}[theorem]{Claim}
\newtheorem{prop}[theorem]{Proposition}
\newtheorem{defi}[theorem]{Definition}

\newcommand{\ep}{\varepsilon}

\newcommand{\cL}{\mathcal{L}}

\newcommand{\cC}{\mathcal{C}}

\newcommand{\cT}{\mathcal{T}}

\newcommand{\K}{\mathcal K}

\def\qedf{\hfill $\Box$}

\title{A stability theorem for embedding
bounded degree spanning trees} 
\author{ B\'ela Csaba\thanks{The research of the author was supported
by the Ministry of Innovation and Technology of Hungary from the National Research, Development and
Innovation Fund, project no. TKP2021-NVA-09. E-mail: bcsaba@math.u-szeged.hu. ORCID: https://orcid.org/0000-0002-6696-3219}\\ Bolyai Institute, University of Szeged, Hungary. }
\date{} 

\begin{document}
\maketitle

\begin{abstract}
We prove that if an $n$-vertex graph $G$ is non-extremal and $T$ is a bounded degree tree on $n$
vertices, then $T\subset G$ even when the minimum degree of $G$ is less than $n/2$ by a linear term.
We avoid the use of the Regularity lemma, instead we apply a vertex decomposition theorem by the author,
which does not require a tower-type lower bound for $n.$ 
\end{abstract}

\medskip

\quad {\bf keywords: } tree embedding, decomposition of graphs, regularity

\medskip

\section{Introduction}

We consider only simple graphs in this paper.
In 1978 Bollob\'as~\cite{BB} conjectured that if $G$ is a graph on $n$ vertices, $n$ is large enough, and 
$\delta(G) \ge (1/2+\ep)n$ for some $\ep >0,$ and $T$ is a bounded
degree tree on $n$ vertices, then $T \subset G.$ The problem was solved in the affirmative by Koml\'os, S\'ark\"ozy and Szemer\'edi~\cite{KSSz1}
for large graphs. Few years later they strengthened their result (see~\cite{KSSz2}), and proved, that $\Delta(T)$ need not be bounded: there exists a constant $c$ such that 
$T \subset G$ if $\Delta(T) \le cn/\log n,$
$\delta(G) \ge (1/2+\ep)n$ and $n$ is large. The Regularity lemma played an essential role in both proofs of the Bollob\'as conjecture.
In~\cite{regifa} the author, Levitt, Nagy-Gy\"orgy and Szemer\'edi proved the theorem below, without using the Regularity lemma.

\begin{theorem}\label{rfa}
Assume that $T$ is a bounded degree tree on $n$ vertices with $\Delta(T)=D.$
Let $G$ be a graph on $n$ vertices. Assume further that $\delta(G) \ge n/2 +c_D\log{n},$ where $c_D$ is a constant depending
only on $D.$
Then there exists a number $n_0$ such that $T \subset G$ for $n \ge n_0$. Furthermore, the bound on $\delta(G)$ is tight:  there exists a graph $G$ with
$\delta(G) > n/2+\log {n}/17$ such that the complete ternary tree on $n$ vertices is not a subgraph of $G$ if $n$ is sufficiently large.
\end{theorem}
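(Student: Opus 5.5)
This theorem has two parts, an embedding statement and a tightness construction, and I would prove them separately.

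For the embedding part I would use a stability dichotomy. Fix small constants $0<\alpha\ll\beta\ll 1/D$.

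\emph{Non-extremal case.} Suppose $G$ is $\beta$-far from both extremal configurations, meaning:
\begin{itemize}
\item[(i)] $G$ is not $\beta$-close to two disjoint cliques on about $n/2$ vertices each, and
\item[(ii)] $G$ is not $\beta$-close to a complete bipartite graph $K_{n/2,n/2}$ (plus arbitrary edges inside one side).
\end{itemize}
Then I would embed $T$ with a randomized greedy procedure, without regularity. First cut $T$ into $O(1/\alpha)$ subtrees of size about $\alpha n$ by removing a few cut vertices. Reserve a small random set $R$ of size $\alpha n$. Embed the subtrees greedily level by level, keeping the images of the current leaves spread out; the minimum degree near $n/2$, together with non-extremality, gives the expansion needed. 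Finally, complete the last $o(n)$ vertices by a matching argument into $R$. For that last step I would check Hall's condition, using that every vertex has at least about $\alpha n/2$ neighbours in $R$, together with expansion.

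\emph{Extremal case.} Here I would use the exact structure.
\begin{itemize}
\item \emph{Near two cliques.} $\delta(G)\ge n/2+c_D\log n$ forces many edges between the two halves $A$ and $B$, more than about $c_D n\log n$ of them. Split $T$ with a single separator into two forests whose sizes match $|A|$ and $|B|$. The connecting edges and the parts of sizes $|A|$ and $|B|$ require a balanced splitting of $T$ using few cross edges; bounded degree gives flexibility here. Exceptional vertices are then absorbed greedily.
\item \emph{Near bipartite.} Let $(A,B)$ be the near-bipartite partition. The two colour classes of $T$ need not be balanced, so I would need edges inside $A$ or $B$ to fix the imbalance. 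The degree surplus $c_D\log n$ guarantees enough such edges.
\end{itemize}

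\emph{Main obstacle.} The main obstacle is the bipartite extremal case with an extremely unbalanced imbalance requirement, since this is exactly where $\log n$ is needed. The cost of re-balancing can be of order $\log n$ when $T$ is a complete $D$-ary tree, because the leaves at depth $\approx\log_D n$ sit at one parity. To handle this I would first find a matching of size about $c_D\log n$ inside the larger side. Each matching edge lets me embed one tree edge inside a side, which shifts the parity of a whole subtree. I would then show that $O(\log n)$ such parity shifts, chosen along a root-to-leaf path decomposition, suffice to make the class sizes match exactly.

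For the tightness construction I would take $A$ and $B$ with $|A|$ slightly above $n/2$. I would make $G[A,B]$ complete, and put inside $A$ a graph of degree about $\log n/17$ and high girth, or an expander-free structure, so that $\delta(G)>n/2+\log n/17$. The complete ternary tree has one colour class of size about $3n/4$, so embedding it forces many edges inside $A$ arranged so that they cover the surplus. A counting argument shows that a sparse, locally tree-like graph inside $A$ cannot supply such a configuration. I would verify the constant $1/17$ last, by optimizing the degree inside $A$ against the depth of the ternary tree.
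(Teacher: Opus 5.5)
Your three-case outline (non-extremal, near two cliques, near $K_{n/2,n/2}$, plus a lower-bound construction) matches how the paper describes the proof of this theorem. The paper itself reproves only the non-extremal case, in the stronger form of Theorem~\ref{fa}, and takes the extremal cases and the lower bound from Csaba, Levitt, Nagy-Gy\"orgy and Szemer\'edi. Two of your steps fail as stated.

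\textbf{The tightness construction.} With $G[A,B]$ complete, $B$ independent and $G[A]$ sparse, regular and of large girth, the complete ternary tree $T$ \emph{does} embed. Let $L$ and $M$ be the larger and smaller colour classes of $T$; all leaves lie in $L$. Put into $I=\Phi^{-1}(B)$ all non-leaf vertices of $L$ (about $n/12$ of them) together with $|B|-n/12$ leaves. Then $T-I$ is $M$ plus $|A|-|M|$ leaves hanging from level $h-1$. This is a forest of isolated vertices, single edges and cherries, and you can tune their numbers. If $G[A]$ has even degree, Petersen's theorem gives a $2$-factor, all of whose cycles are long by the girth assumption. Cutting these cycles into paths on $1$, $2$ and $3$ vertices realizes $T-I$ as a spanning subgraph of $G[A]$.

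So spreading the surplus thinly helps the embedding; an obstruction must \emph{concentrate} it. For instance, take a set $Z$ of $s=\Theta(\log n)$ vertices joined to everything. Let $V\setminus Z=X\cup Y$ with $X,Y$ independent, $|X|=|Y|=(n-s)/2$ and $G[X,Y]$ complete, so $\delta(G)=n/2+s/2$. An embedding yields $S=\Phi^{-1}(Z)$ with $|S|=s$ such that the at most $3s+1$ components of $T-S$ can be oriented into exact balance. Each component is a pendant subtree minus further pendant subtrees, and every pendant subtree has imbalance $\pm(1-(-3)^j)/4$. Balancing therefore writes a number with $h-O(\log s)$ nonzero balanced-ternary digits ($h\approx\log_3 n$ the depth) as a signed sum of at most $4s+1$ powers of $3$, which forces $s=\Omega(\log n)$.

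The same rigidity (subtree sizes $(3^j-1)/2$) affects your two-cliques case. For the ternary tree and $|A|\approx n/2$, no single separator, indeed no $o(\log n)$ cut edges, gives parts of sizes exactly $|A|,|B|$. Moreover, the $A$--$B$ edges may all pass through $O(\log n)$ vertices. So the $\log n$ term is needed there too, not only in the bipartite case.

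\textbf{The non-extremal case.} The missing piece is the step that makes the embedding spanning. A Hall-type matching into the reserve $R$ only places \emph{leaves} whose parents are already embedded. A tree with $\Delta(T)\le D$ may have only two leaves (a Hamilton path). Then the free vertices must be covered by bare paths of prescribed lengths between prescribed endpoints, which needs an absorbing or connecting mechanism you do not supply. Your sizes are also inconsistent: the final part must have exactly as many vertices as remain free, about $|R|=\alpha n$, not $o(n)$. Moreover, with $\delta(G)\approx n/2$ a greedy embedding gets stuck once about half of $V(G)$ is used, unless the image provably stays random-like inside every neighbourhood. ``Spread out'' does not yet state such an invariant.

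The paper handles both issues with different machinery:
\begin{itemize}
\item Theorem~\ref{pontfelbontas} and Proposition~\ref{Gdecomp} give balanced cluster pairs with large super-regular cores.
\item The irregular vertices are covered by the zigzag embedding of Lemma~\ref{kimerito}.
\item Chunks of $T-T_S$ are assigned to pairs in increasing order of imbalance, so each cluster receives within $3\ep^4m$ of its size in tree vertices.
\item Relocations via Lemma~\ref{reloc} make these numbers equal.
\item The Blow-up lemma then embeds the rest exactly, regardless of how many leaves $T$ has.
\end{itemize}
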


That is, for embedding a bounded degree tree on $n$ vertices, it is sufficient if the minimum degree of the host graph is $n/2+O(\log n),$ 
moreover, an additive term of $\Omega({\log n})$ above $n/2$ is necessary. 

Let us mention a closely related theorem by Kathapurkar and Montgomery~\cite{KM}. They proved, without the Regularity lemma, that if the minimum semidegree in a $n$-vertex directed graph $D$ is at least $(1/2+\ep)n,$ then $D$ contains every spanning
oriented tree with maximum degree $o(n/\log n).$ This theorem implies the analogous result in~\cite{KSSz2}
for spanning trees in undirected graphs.

The main result of the present paper is a stability theorem for embedding bounded degree trees, strengthening 
Theorem~\ref{rfa} for so called non-extremal host graphs. First we need a definition for non-extremality.
Let $0<\gamma < 1/4$ be a number. A graph $G=(V, E)$ on $n$ vertices is called {\it $\gamma$-non-extremal}, if 
for every, not necessarily disjoint subsets $A, B \subset V,$ $|A|=|B|=\lfloor n/2\rfloor$ we have $e(A, B)> \gamma n^2,$ otherwise we
say that $G$ is {\it $\gamma$-extremal.} Note, that $\gamma\le 1/4$ for every graph, and the only $1/4$-non-extremal graph is the complete graph.

It is well-known (see eg.~in~\cite{regifa}) that if $G$ has minimum degree at least $n/2$ and is extremal, then, roughly speaking, either it is close to a balanced complete bipartite graph, or to the union of two vertex disjoint complete
graphs on $n/2$ vertices each. Here closeness is measured in edit distance. 

Let us remark, that due to the minium degree conditions, the Bollob\'as conjecture and the two papers~\cite{KSSz1, KSSz2}
by Koml\'os, S\'ark\"ozy and Szemer\'edi are on embedding spanning trees into $\ep$-non-extremal host graphs, similarly
to the paper by Kathapurkar and Montgomery~\cite{KM}.
It turns out that if $G$ is non-extremal, then the minimum degree requirement can be relaxed significantly. 

\begin{theorem}\label{fa}
Let $D\ge 2$ be an integer and $\gamma, \nu$ real numbers with $0<\nu \ll \gamma \ll 1.$ Then there exists a threshold number $n_0=n_0(D, \gamma, \nu)$ such that the following holds. If $n\ge n_0,$ $G$ is a $\gamma$-non-extremal graph on $n$ vertices having minimum degree $\delta(G)\ge (1/2-\nu)n,$ and $T$ is an $n$-vertex tree with $\Delta(T)\le D,$ then 
$T\subset G.$
\end{theorem}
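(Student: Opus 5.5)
We plan to follow the usual absorbing/regularity-type strategy, but with the author's vertex decomposition theorem in place of the Regularity lemma. The proof has three stages: a structural analysis of $G$, a cut-up of $T$ into small pieces, and a sequential embedding finished by a matching step.

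\emph{Structural step.} First we apply the decomposition theorem to $G$ with parameters $\nu\ll\ep\ll\gamma$. This partitions $V(G)$ into a small exceptional set $V_0$ and clusters $V_1,\dots,V_k$, where pairs are dense and regular-like ("$\ep$-regular" in the weak sense provided by the decomposition). From this we form the reduced (cluster) graph $R$. Since $\delta(G)\ge(1/2-\nu)n$, $R$ has minimum degree about $(1/2-\nu-O(\ep))k$. Non-extremality is inherited by $R$: any two sets of $k/2$ clusters span at least $\gamma k^2/2$ dense pairs. The key claim is that this forces a connected fractional perfect matching in $R$. More precisely, we want a collection of vertex-disjoint edges and triangles covering all but $O(\ep)k$ clusters, lying in one connected component of $R$. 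The plan to prove this claim is as follows. A maximum matching $M$ in $R$ misses fewer than $\ep k$ clusters, since otherwise a Gallai--Edmonds or Tutte-type barrier would give sets $A,B$ of size about $k/2$ with few edges between them, i.e.\ extremality. Connectivity of the relevant component also follows from non-extremality: two components of size close to $k/2$ would contradict $e(A,B)>\gamma n^2$ with $A=B$ chosen across them.

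\emph{Tree preparation.} Next we cut $T$, using $\Delta(T)\le D$, into a bounded number of subtrees of size at most $\beta n$ with $\ep\ll\beta$. The subtrees are joined along a small set of at most $O(1/\beta)$ connecting vertices. Each small subtree is bipartite with colour classes whose sizes we can balance by grouping subtrees. We then assign groups of subtrees to the edges of $M$, proportionally to cluster sizes, using the connectivity of $R$ to route the connecting paths. Embedding a subtree into a regular pair goes greedily, vertex by vertex, while keeping the images of parents in typical vertices. This fills each pair up to $(1-\ep')$ of its capacity.

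\emph{Finishing.} The leftover consists of $V_0$, the uncovered clusters, and the unused slack. We handle it with a set $L$ of leaves, or of parents of many leaves; $T$ has either $\Omega(n)$ leaves or $\Omega(n)$ bare paths of bounded length. We reserve these pieces and embed them last. For leaves we use a K\H{o}nig--Hall argument between the reserved leaves' parents and the remaining vertices. For bare paths we use a connecting/absorbing argument through regular pairs. The minimum degree $(1/2-\nu)n$ gives every remaining vertex about $n/2$ neighbours, so Hall's condition can fail only if some set $S$ of size about $n/2$ has its neighbourhood confined to a set of size about $n/2$. That would give $A,B$ with $e(A,B)\le\gamma n^2$, contradicting non-extremality.

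We expect this last Hall-type step to be the main obstacle, since the $\nu n$ deficit below $n/2$ means the naive greedy/Hall count fails. Our first attempt will be to pre-distribute the exceptional vertices $V_0$ and the parents of leaves so that they favour a set $S$ that has a large "cross" density, and then to quantify the Hall deficiency via non-extremality.
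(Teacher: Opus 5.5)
Your plan has a genuine gap exactly where the theorem is hard: absorbing the vertices that end up outside the regular structure. You leave this step open, and the reason you give for why Hall's condition should hold is not correct.

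The leftover $U$ (the exceptional set $V_0$, the clusters missed by $M$, and the slack) is a set of small linear size, and it is not random. So the Hall problem lives on subsets of $U$, not on sets of size about $n/2$, and $\gamma$-non-extremality says nothing about such small sets. Nothing in your construction prevents, say, half of $U$ from consisting of vertices with one common neighbourhood $N$ of size $(1/2-\nu)n$. This is compatible with both the degree condition and non-extremality, since $|U|\ll\gamma n$. If the parents of the reserved leaves are spread proportionally, these vertices see only about a $(1/2-\nu)$-fraction of the $|U|$ leaf slots, and Hall fails. Proving that one placement of parents works simultaneously for all such groups is precisely the missing content.

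The heuristic ``Hall can only fail for sets of size about $n/2$'' holds only when the vertex set is random-like, as in Lemma~\ref{1Fact} via the sets $S_v$. At the very least you would need the adversarial part to be negligible compared with a random reservoir, plus a separate non-extremality-based absorber for the bare-path case.

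The paper never creates such a leftover. Using Lemma~\ref{ParosNonExtr}, it shows (Lemma~\ref{reloc}) that every vertex $v$ has $\Omega(\gamma^6 n)$ disjoint relocating triples $(v,u,w)$. Such a triple inserts $v$ into any prescribed cluster, and each displaced vertex keeps a $\gamma^3$-fraction of degree into its new partner cluster. The second and third vertices are drawn from random half-clusters, so no cluster is overused. The resulting irregular vertices are covered first by tree components via the zigzag scheme of Lemma~\ref{kimerito}. Chunks are processed in increasing order of imbalance, so the imbalance created there can be compensated later. Exact cluster sizes are then restored by further relocations, and the Blow-up lemma embeds the rest spanningly. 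No Hall-type finish is needed.

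Your structural step also misreads the tool you invoke. Theorem~\ref{pontfelbontas} applies only to a balanced bipartite graph with all degrees between $r(1-\lambda)$ and $r$. The paper first extracts a $\nu n$-regular spanning subgraph of a random bipartition $G[A,B]$, which itself requires non-extremality (Lemma~\ref{1Fact}). The theorem then outputs only a \emph{matching} of super-regular pairs $G[A_i,B_i]$; all other pairs carry no structure. So there is no reduced graph with minimum degree $(1/2-\nu-O(\ep))k$, no inherited non-extremality, and no connectivity of $R$ along which to route. Your cluster-graph plan would really require the Regularity lemma. The paper instead makes all connections inside $G$: the first three levels of each component are embedded as a depth-three tree from $\Phi(s)$ into a prescribed small set $H$, using non-extremality directly.

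Finally, the order $\ep\ll\beta$ must be reversed. A piece of $T$ has to be much smaller than a cluster to fit into a single pair; the paper takes $|F|\le\ep^4 m$, where $m$ is the minimum cluster size.
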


Observe that, according to the above theorem, the minimum degree of $G$ can be strictly less than $n/2,$
if $G$ is non-extremal. 
Careful examination of the proof shows that with parameters $\gamma\le 1/100$ and $\nu\le \gamma/50,$ 
the theorem holds. However, it is easier to follow the computations if throughout the proof we apply the more flexible hierarchy $0< \nu \ll \gamma \ll 1$ in the statement of the theorem.

The proof of Theorem~\ref{rfa} has three cases: the case of a non-extremal $G,$ the almost complete bipartite case and 
the union of two almost 
complete subgraphs case. The minimum degree requirement of Theorem~\ref{rfa} is tight for the extremal
cases, as is proved in~\cite{regifa}, while Theorem~\ref{fa} shows that it can be substantially relaxed for non-extremal host graphs.
Hence, the stability version we prove in this paper together with the proofs 
of the extremal cases in~\cite{regifa} not only gives a new proof for this tree embedding problem, it is
also a stronger result. 

In the proof of Theorem~\ref{fa} we use several tools, ideas that were developed for use with the Regularity lemma of Szemer\'edi~\cite{Szemeredi}, but we replace the Regularity lemma with an alternative graph decomposition result by the author~\cite{CsBundle}. 
Roughly speaking,
we can use the advanced, well developed techniques of the Regularity Method -- using only a ``super matching'' containing super-regular pairs, which we find without the Regularity lemma. Due to this fact the new proof applies for graphs of ``practical'' size, not just for astronomically large ones.

The paper is organized as follows. In the second section we review the basic definitions and tools we need. Beginning with the third section we focus on the proof of the tree embedding theorem. The third section includes the preprocessing of the host graph $G,$ the fourth section contains the preprocessing of the tree to be embedded. A few further tools
are introduced in the fifth section. Finally, in the sixth section we put everything together, and prove the tree embedding theorem.

\section{Notation, definitions, main tools}

Given a graph $G=(V, E)$ we use the notation $v(G)=|V|$ and $e(G)=|E|.$ Given a set 
$X\subset V,$ $G[X]=(X, E_X),$ where $E_X=\{uv: uv\in E, \ u, v\in X\}.$ For 
disjoint subsets $X, Y\subset V,$ we let 
$G[X, Y]$ denote the bipartite subgraph of 
$G$ with parts $X$ and $Y$ that contains all the edges of $G$ with one endpoint in $X$ and the other endpoint in $Y.$ For every vertex $v\in V,$ the {\it neighborhood} of $v$ is denoted by 
$N_G(v),$ and the {\it degree} of $v$ is denoted by $\deg_G(v)=|N_G(v)|.$ Given a set $S\subset V,$ we let $N_G(v, S)=N_G(v)\cap S$ and 
$\deg_G(v, S)=|N_G(v, S)|,$ the subscripts may be omitted. For an $H\subseteq G$ and $v\in V(G),$ the number of neighbors of $v$ in $H$
is sometimes denoted by $deg_G(v, H).$ If we write $G=(A, B; E),$ this means that $G$ is bipartite with parts $A$ and $B,$ and edge set $E.$ If it is clear from the context, that a graph in question is bipartite, we may only write out
the vertex parts, and omit the letter ``$E$''. We call a bipartite graph {\it balanced}, if the two parts have the same cardinality.

The {\it density} of $G$ is defined to be $d_G=e(G)\cdot {\binom{v(G)}{2}}^{-1}.$ The {\it bipartite density} of bipartite subgraphs of $G$ with parts $A$ and $B$ 
is $d_G(A, B)=\frac{e(G[A, B])}{|A|\cdot |B|}.$ Sometimes the subscript may be omitted when there is no confusion. Similarly, when a graph in question is bipartite, density will mean bipartite density.

Given numbers $x, y$ we say that $z=x\pm y,$ if $x-y \le z \le x+y.$ If $n\ge 1$ is an integer, then we let 
$[n]=\{1, \dots, n\}.$ 
For numbers $0< \alpha, \beta <1$ the notation ``$\alpha \ll \beta$'' means that $\alpha$ is sufficiently smaller than $\beta.$ We remark, that whenever this notation is used in the paper, the relation of $\alpha$ and $\beta$ can be explicitly calculated, and will always
mean that $\alpha \le \beta^c$ for some positive integer constant $c.$ Still, using ``$\ll$'' enables us to concentrate on the essential parts
of the proofs.

\subsection{Regular pairs}

While we avoid using the Regularity lemma, the notion of regularity plays an essential role in the paper. Below 
is a brief review of the basics in the area.

\medskip

\begin{defi} 
Let $0<\ep, \delta <1$ be real numbers.
We say that a bipartite graph $H=(A, B)$ is an $\varepsilon$-{\it regular pair}, if for every $X\subseteq A,$ $Y\subseteq B$ 
with $|X|\ge \varepsilon |A|$ and $|Y|\ge \varepsilon |B|$ we have 
$$|d_H(A, B) - d_H(X, Y)|\le \ep.$$ 

We call $H$ an $(\varepsilon, \delta)$-{\it super-regular pair}, if in addition every $v\in A$ has at least $\delta |B|$ neighbors and every $u\in B$ has at least 
$\delta |A|$ neighbors.
\end{defi}


The following well known fact below will prove to be useful, the proof is omitted.

\begin{fact}\label{reg}
Assume that $H=(A, B)$ is an $\ep$-regular pair with density $d_H.$ Let $A'=\{x\in A: deg(x)<(d_H-\ep)|B|\}$ and 
$A''=\{x\in A: deg(x)>(d_H+\ep)|B|\}.$ Similarly, let $B'=\{x\in B: deg(x)<(d_H-\ep)|A|\}$ and 
$B''=\{x\in B: deg(x)>(d_H+\ep)|A|\}.$ Then $|A'|, |A''|\le \ep |A|$ and $|B'|, |B''|\le \ep |B|.$  
\end{fact}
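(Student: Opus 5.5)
The statement is Fact~\ref{reg}, and the proof is a direct argument by contradiction from the definition of $\ep$-regularity. There are four inequalities to verify. By symmetry between low and high degrees, and between the two sides $A$ and $B$, it suffices to treat $A'$ in detail; $A''$, $B'$ and $B''$ are handled the same way.

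Suppose, for a contradiction, that $|A'|>\ep|A|$. Take $X=A'$ and $Y=B$. Then $|X|\ge \ep|A|$, and trivially $|Y|=|B|\ge \ep|B|$, so the regularity condition applies to the pair $(X,Y)$ and gives $|d_H(A,B)-d_H(X,Y)|\le\ep$.

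On the other hand, every $x\in A'$ has fewer than $(d_H-\ep)|B|$ neighbours in $B$. Summing over $X$ gives
$$e(X,B)<|X|(d_H-\ep)|B|, \qquad\text{so}\qquad d_H(X,B)<d_H-\ep.$$
This means $d_H-d_H(X,Y)>\ep$, contradicting regularity. Hence $|A'|\le\ep|A|$.

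For $A''$ the argument is identical with the inequalities reversed: if $|A''|>\ep|A|$, then summing degrees over $A''$ gives $d_H(A'',B)>d_H+\ep$, again contradicting regularity. For $B'$ and $B''$ one takes $X=A$ and $Y=B'$ (respectively $Y=B''$) and sums the degrees of the vertices of $Y$ into $A$.

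There is no real obstacle here. The only points needing care are that the full opposite side $B$ (or $A$) is itself an admissible set, since it has size at least $\ep$ times the part, and that all the degree inequalities are strict. The strictness means that even the boundary case $|A'|=\ep|A|$ causes no trouble, because regularity applies whenever $|X|\ge\ep|A|$.
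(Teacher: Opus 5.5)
Your proof is correct, and it is the standard argument. You apply the regularity condition to the set of atypical vertices paired with the whole opposite side, and summing the strict degree bounds pushes the density deviation above $\ep$; the paper itself omits the proof of this fact as well known, so there is no different route to compare against.
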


We will use the so called Slicing lemma~\cite{KS} at various points in the paper.

\begin{fact}\label{slicing}
Assume that $H=(A, B)$ is an $\ep$-regular pair with density $d_H,$ and for some $\alpha >\ep$ let $A'\subset A,$
$|A'|\ge \alpha |A|$ and  $B'\subset B,$ $|B'|\ge \alpha |B|.$ Then $(A', B')$ is an $\ep'$-regular pair with 
$\ep'=\max \{\ep/\alpha, 2\ep\}$ and for its density $d'$ we have $|d'-d|<\ep.$ 
\end{fact}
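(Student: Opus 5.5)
The Slicing lemma (Fact~\ref{slicing}) follows directly from the definition of $\ep$-regularity. The plan is to check regularity of the pair $(A', B')$ at the scale $\ep'$ by passing from subsets of $A'$, $B'$ back to subsets of $A$, $B$.

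\textbf{Step 1: the density of $(A', B')$.} Since $\alpha>\ep$, we have $|A'|\ge \alpha|A|\ge \ep|A|$ and $|B'|\ge \ep|B|$. The definition of $\ep$-regularity of $H$, applied with $X=A'$ and $Y=B'$, gives $|d'-d_H|=|d_H(A',B')-d_H(A,B)|\le \ep$. This is the claimed density estimate, with $\le$ in place of $<$. Strict inequality cannot be extracted from the definition as stated, so I would either read $<$ as $\le$ or note that it is immaterial for all later uses.

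\textbf{Step 2: regularity of $(A', B')$.} Take $X\subseteq A'$ and $Y\subseteq B'$ with $|X|\ge \ep'|A'|$ and $|Y|\ge \ep'|B'|$. Because $\ep'\ge \ep/\alpha$,
\[
|X|\ge (\ep/\alpha)|A'|\ge (\ep/\alpha)\alpha|A|=\ep|A|,
\]
and similarly $|Y|\ge \ep|B|$. Hence regularity of $H$ applies to the pair $(X,Y)$ and gives $|d_H(X,Y)-d_H|\le \ep$. The triangle inequality with Step 1 then yields
\[
|d_H(X,Y)-d'|\le |d_H(X,Y)-d_H|+|d_H-d'|\le 2\ep\le \ep'.
\]
This is exactly $\ep'$-regularity of $(A', B')$, and it explains why the $2\ep$ term appears in the definition of $\ep'$.

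\textbf{Main obstacle.} There is no substantive obstacle. The only point needing care is the size bookkeeping in Step 2: we must confirm that the threshold $\ep'|A'|$ translates into at least $\ep|A|$. This is precisely what the choice $\ep'\ge\ep/\alpha$ guarantees.
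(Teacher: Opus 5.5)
Your proof is correct and is exactly the standard argument for the Slicing lemma: apply regularity of $H$ to $(A',B')$ to get the density estimate, then use $\ep'|A'|\ge \ep|A|$ and the triangle inequality. The paper gives no proof of its own and only cites Koml\'os--Simonovits for this fact; your remark is also accurate that the paper's non-strict definition of $\ep$-regularity yields only $|d'-d_H|\le\ep$ rather than the stated strict inequality, which is harmless for every later use.
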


\medskip

Given an $(\ep, \delta)$-super-regular pair $H(A, B)$ and a vertex $v\not\in A\cup B,$ we may insert $v$ into $A$
without significantly reducing the ``super-regularity'' of the pair, if $deg(v, B)\ge \delta |B|.$ In fact we may even insert a small linear number of vertices into one vertex class, if these have sufficiently large number of neighbors in the opposite 
vertex class.

\begin{lemma}\label{beilleszt}
Let $H(A, B)$ be an $(\ep, \delta)$-super-regular pair with density $d\ge \delta.$ 
Assume that $0\le s\le \ep^2|A|$ and $0\le t\le \ep^2|B|$ are numbers,  $S_A=\{v_1, \ldots, v_s\}$ and 
$S_B=\{u_1, \ldots, u_t\}$ are sets of vertices such that $S_A\cap A= S_B\cap B=\emptyset,$ and  
$deg(v, B)\ge d |B|$ for every $v\in S_A$ and $deg(u, A)\ge d|A|$ for every $u\in S_B.$ 
Then the new pair $H'(A\cup S_A, B\cup S_B)$ is $(3\ep, \delta-\ep)$-super-regular with density 
$d'=d'(A\cup S_A, B\cup S_B)=d\pm \ep.$  
\end{lemma}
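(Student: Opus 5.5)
The plan is to verify the two requirements of super-regularity directly: the minimum degree condition and $3\ep$-regularity. Both come from the smallness of $S_A,S_B$ (at most an $\ep^2$-fraction) and from the assumption that the new vertices have many neighbours. Write $A^*=A\cup S_A$ and $B^*=B\cup S_B$, so that $|A|\le |A^*|\le (1+\ep^2)|A|$ and similarly for $B$. Throughout we may assume $\ep$ is small, say $\ep\le 1/10$; otherwise the claimed regularity parameter $3\ep$ is trivial or the statement is vacuous in the relevant range.

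\emph{Minimum degrees.} Take a vertex $x\in A$. It has at least $\delta|B|$ neighbours in $B\subseteq B^*$, and
$$\delta|B|\ge \delta |B^*|/(1+\ep^2)\ge (\delta-\ep)|B^*|.$$
A vertex $v\in S_A$ has $\deg(v,B)\ge d|B|\ge \delta|B|$, since $d\ge\delta$, so the same bound holds for it. The argument for vertices of $B^*$ is symmetric. Hence every vertex of $A^*$ has at least $(\delta-\ep)|B^*|$ neighbours in $B^*$, and vice versa.

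\emph{The density $d'$.} Compare $e(A^*,B^*)$ with $e(A,B)=d|A||B|$. The extra edges number at most $s|B^*|+t|A^*|\le 3\ep^2|A||B|$. The normalising factor $|A^*||B^*|$ differs from $|A||B|$ by a factor between $1$ and $(1+\ep^2)^2$. Together these give $d'=d\pm 4\ep^2$, which is in particular $d\pm\ep$.

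\emph{Regularity.} Let $X\subseteq A^*$ and $Y\subseteq B^*$ with $|X|\ge 3\ep|A^*|$ and $|Y|\ge 3\ep|B^*|$. Put $X_0=X\cap A$ and $Y_0=Y\cap B$. Then
$$|X_0|\ge |X|-\ep^2|A|\ge (3\ep-\ep^2)|A|\ge \ep|A|,$$
and likewise $|Y_0|\ge \ep|B|$. So $\ep$-regularity of $H$ gives $d(X_0,Y_0)=d\pm\ep$. Next we compare $d(X,Y)$ with $d(X_0,Y_0)$.
\begin{itemize}
\item The edges of $H'[X,Y]$ not in $H[X_0,Y_0]$ number at most $s|Y|+t|X|$. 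Relative to $|X||Y|$ this is at most $\ep^2|A|/|X|+\ep^2|B|/|Y|\le 2\ep/3$.
\item The ratio $|X_0||Y_0|/(|X||Y|)$ lies between $(1-\ep/3)^2$ and $1$. So renormalising changes the density by at most $2\ep/3$.
\end{itemize}
Hence $d(X,Y)=d\pm(\ep+4\ep/3)$. Combining with $d'=d\pm4\ep^2$ yields $|d(X,Y)-d'|\le 7\ep/3+4\ep^2\le 3\ep$ for small $\ep$.

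The only real obstacle is this last step: keeping the three error sources within the budget $3\ep$. These are the regularity error, the added edges, and the renormalisation together with the shift from $d$ to $d'$. The budget works because the added sets are of order $\ep^2$ while $X$ and $Y$ have size of order $\ep$, so each contamination term is only $O(\ep)$ with a constant below $1$.
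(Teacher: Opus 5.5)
Your proof is correct and follows the same route as the paper. The degree bounds come directly from the hypotheses on $S_A,S_B$. For regularity, you discard $X\cap S_A$ and $Y\cap S_B$, apply $\ep$-regularity to the remainders, and charge the added vertices at most $\ep^2|A|/|X|+\ep^2|B|/|Y|\le 2\ep/3$. Your accounting is slightly more careful than the paper's at one point: you compare $d(X,Y)$ with the new density $d'$, as the definition of regularity requires, rather than with $d$.
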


\noindent {\bf Proof:} Let $A'=A\cup S_A$ and $B'=B\cup S_B.$ The lower bounds $(\delta-\ep)|B'|$ and
$(\delta-\ep)|A'|$ for the degrees of vertices in $A'$ and $B',$ respectively, follows easily from the degree bounds for $S_A$ and $S_B.$
Since $S_A$ and $S_B$ are very small compared to $A,$ respectively, $B,$ a simple calculations shows
that $d'$ cannot deviate from $d$ by more than $\ep.$  

Next we verify the $3\ep$-regularity of the new pair. Let $X'\subset A'$ with $|X'|=3\ep |A'|$ and $Y'\subset B'$ 
with $|Y'|=3\ep|B'|,$ by convexity of density (see e.g. in~\cite{KS}) it is enough to consider
subsets of this size. Set $X=X'-S_A$ and $Y=Y'-S_B.$ Clearly, 
$$|X|\ge 3\ep |A'|-\ep^2|A|\ge \ep(3-\ep)|A'|>\ep |A|,$$ and similarly, 
$$|Y|\ge 3\ep |B'|-\ep^2|B|\ge \ep(3-\ep)|B'|>\ep |B|.$$
We need upper and lower bounds for the number of edges between $X'$ and $Y'.$ For the lower bound
we can use the $\ep$-regularity of the original pair $H(A, B),$ since $X$ and $Y$ are sufficiently large:
$$e(X', Y')\ge (d-\ep)|X|\cdot |Y|\ge (d-\ep)\ep^2(3-\ep)^2|A'|\cdot |B'|.$$
This implies that $$d'(X', Y')=\frac{e(X', Y')}{|X'| |Y'|}\ge (d-\ep)\frac{\ep^2(3-\ep)^2 |A'|\cdot |B'|}{9\ep^2 |A'||B'|},$$ here we used that $|X'|=3\ep|A'|$ and $|Y'|=3\ep |B'|.$ Simple calculation shows that the latter expression
is larger than $d-3\ep,$ hence, $d'(X', Y')\ge d-3\ep.$

For estimating the upper bound, we assume the worst case, when vertices of $S_A\cup S_B$ have full degree into the opposite
part:
$$e(X', Y')\le e(X, Y)+e(S_A\cap X', Y')+e(X', S_B\cap Y').$$
Set $x=|X'\cap S_A|$ and $y=|Y'\cap S_B|.$ Then, applying $\ep$-regularity for $e(X, Y),$ we have
$$e(X', Y')\le (d+\ep)|X||Y|+x|Y'|+y|X'|.$$ Dividing by $|X'||Y'|$ we obtain that
$$d'(X', Y')\le (d+\ep)\frac{|X||Y|}{|X'||Y'|}+\frac{x}{|X'|}+\frac{y}{|Y'|}\le 
d+\ep+\frac{\ep^2|A|}{3\ep|A'|}+\frac{\ep^2|B|}{3\ep |B'|}\le d+3\ep.$$
This finishes the proof of the lemma. 
\qedf

\medskip

We use the decomposition theorem of the author~\cite{CsBundle} below for replacing the Regularity lemma. 

\begin{theorem}\label{pontfelbontas}
Let $0< \ep <1/10$ be a number, and assume that $d$ is a real number such that $10 \ep ^{1/5}\le d< 1/3.$  
Assume further that $G=(V, E)$ is a balanced bipartite graph on $2n$ vertices with bipartition $V=A\cup B$ and $r(1-\lambda)\le deg(v)\le r$ 
for every $v\in V,$ where $r\ge d^{1/3}n$ and $0\le \lambda\le d^4/2.$ 
If $n>\exp(150 \log (1/d)/\ep^2),$ then there exists a natural number $K\le 8 \frac{d_G}{d}\cdot d^{-100/\varepsilon^2}$ such that $G$ admits the following decomposition:

\begin{itemize}

\item [(i)] $A=A_1\cup \dots \cup A_K \cup A_0$ and $B=B_1\cup \dots \cup B_K \cup B_0,$
where $A_i\cap A_j=B_i\cap B_j=\emptyset,$ whenever $i\neq j,$

\item [(ii)] $|A_0 \cup B _0|< 8\sqrt[3]{d}n,$ 

\item [(iii)] $|A_i|, |B_i| \ge d^{101/\ep^2}\cdot \frac{n}{4}=\Omega\left(n^{1/3}(\log n)^{-1/2}\right)$ for every 
$1\le i\le K,$

\item [(iv)] $||A_i|-|B_i||\le 2\ep^2|A_i|$ for every $1\le i\le K,$

\item [(v)] the bipartite subgraphs $G[A_i, B_i]\subset G$ for $i\ge 1$ are all $(\ep', d_i)$-super-regular, where 
$\ep'\le (64\ep)^{1/5}$ and $d_i\ge d-3\ep.$ 

\end{itemize}

\end{theorem}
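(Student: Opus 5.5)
This decomposition theorem is imported from~\cite{CsBundle}; here we only outline how one would prove it without the Regularity lemma. The approach is greedy. We repeatedly extract a dense $\ep$-regular pair from the part of $G$ not yet covered, clean it into a super-regular pair, and stop once the uncovered vertex set $A_0\cup B_0$ has size below $8\sqrt[3]{d}n$.

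The basic engine is a density-increment argument inside a single bipartite subgraph. Suppose we have current sets $A^*\subset A$ and $B^*\subset B$ with bipartite density at least $d$, and the pair is not $\ep$-regular. Then there are witnesses $X\subset A^*$, $Y\subset B^*$ with $|X|\ge\ep|A^*|$, $|Y|\ge \ep|B^*|$ and density deviating by more than $\ep$. A standard defect (Cauchy--Schwarz) argument then yields a subpair of relative size at least $\ep$ in each class whose density is larger by about $\ep^2$. Since density is at most $1$, this can happen at most $O(\ep^{-2}\log(1/d))$ times. After that many steps we reach an $\ep$-regular pair of size at least $d^{O(1/\ep^2)}n$, which explains the lower bound in (iii) and the requirement $n>\exp(150\log(1/d)/\ep^2)$. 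Applying Fact~\ref{reg}, we discard the at most $\ep$-fraction of vertices with low degree and trim the larger side so that the classes are nearly equal. The Slicing lemma (Fact~\ref{slicing}) keeps the pair $(64\ep)^{1/5}$-regular with minimum degrees at least $(d-3\ep)$ times the size of the opposite class. This gives (iv) and (v).

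The global step is to iterate. Record the super-regular pair $(A_i,B_i)$, remove it, and continue inside the remaining graph. The near-regularity hypothesis $r(1-\lambda)\le \deg(v)\le r$ with $\lambda\le d^4/2$ matters here. As long as $|A^*\cup B^*|\ge 8\sqrt[3]{d}n$, the leftover graph still has density at least $d$ between the remaining parts, because $r\ge d^{1/3}n$ and degrees are nearly uniform. One can see this by double counting: the edges from the remaining sets into the already covered pairs are bounded using the near-regularity, so that the covered part cannot absorb almost all the degree of the uncovered vertices. If it could, an averaging argument would contradict the near-equality of degrees.

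The bound on $K$ follows by comparing edge counts. Each extracted pair has at least $d$ times (product of sizes) edges, and the total edge count is $d_G n^2$. Combined with the minimum piece size, this gives $K\le 8\frac{d_G}{d}d^{-100/\ep^2}$.

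The main obstacle is this global covering step: showing that the greedy extraction never gets stuck before the leftover drops below $8\sqrt[3]{d}n$, i.e.\ that uncovered vertices keep density at least $d$ among themselves. I would try first a potential or averaging argument over the degree sum $\sum_{v\in A^*}\deg(v,B^*)$, using the two-sided degree bounds. If this turns out too weak, I would allow recycling vertices: swap uncovered vertices into existing pairs via Lemma~\ref{beilleszt}, provided they have degree at least $d$ times the size of the opposite class of some pair.
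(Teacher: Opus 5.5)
The paper does not prove this theorem; it is quoted from \cite{CsBundle}, so there is no in-paper argument to compare with. Judged on its own, your outline has a genuine gap exactly at the step you flag as the main obstacle. The justification you give for that step is false: near-regularity of $G$ does \emph{not} force the uncovered part to keep density $d$.

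Write $a=|A^*|$, $b=|B^*|$, $A'=A\setminus A^*$, $B'=B\setminus B^*$. The degree constraints are all met by $e(A^*,B^*)=0$, $e(A^*,B')=ar$, $e(A',B^*)=br$, $e(A',B')=(n-a-b)r$ whenever $a+b\le n$. So no double counting of degrees alone can yield your claim. Here is a concrete instance, ignoring rounding:
\begin{itemize}
\item let $r=d^{1/3}n$ and $a=b=5d^{1/3}n$;
\item let $G[A',B']$ be a disjoint union of copies of $K_{r',r'}$ with $r'=r(n-2a)/(n-a)$;
\item let $G[A^*,B']$ be biregular with degrees $r$ on $A^*$ and $r-r'=ar/(n-a)$ on $B'$, and symmetrically for $G[A',B^*]$;
\item put no edges between $A^*$ and $B^*$.
\end{itemize}
This $G$ is $r$-regular, so it satisfies every hypothesis with $\lambda=0$. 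Each block $K_{r',r'}$ is a legitimate output of your extraction step: it is a witness of density $1$ and linear size, and it is perfectly regular and balanced. If the blocks are extracted first, the uncovered set $A^*\cup B^*$ has $10d^{1/3}n>8\sqrt[3]{d}n$ vertices and no edges at all, so the procedure stalls. The theorem of course still holds for this $G$; the point is that the pairs must be chosen with foresight. What is missing is a rule for choosing pairs that keeps the uncovered graph nearly regular. That is, apart from a small exceptional set, every uncovered vertex must retain a proportional share of its degree inside the uncovered part. An arbitrary greedy choice of dense regular pairs does not have this property.

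Your fallback does not repair this. Lemma~\ref{beilleszt} absorbs at most $\ep^2|A_i|$ vertices per cluster, so $O(\ep^2 n)$ in total. Here you would need to absorb $\Omega(d^{1/3}n)$ vertices, and $\ep\ll d$. This is precisely why Proposition~\ref{Gdecomp} must carry the irregular sets $A''_i,B''_i$.

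Two quantitative claims also do not follow from what you wrote.
\begin{itemize}
\item Each increment passes to witness sets of relative size only $\ep$, so $s$ increments leave a pair of size $\ep^{s}n$. Additive gains of order $\ep^2$ give $s=O(\ep^{-2})$, not $O(\ep^{-2}\log(1/d))$, and hence size $\ep^{O(\ep^{-2})}n$, not $d^{O(\ep^{-2})}n$. The hypotheses only require $\ep\le(d/10)^5$, so this can be far below $d^{101/\ep^2}n/4$ in (iii). Getting base $d$ needs increments whose shrinkage depends only on $d$.
\item Counting edges, with pairs of size at least $m=d^{101/\ep^2}n/4$ and density at least $d$, gives only $K\le d_Gn^2/(dm^2)=16\frac{d_G}{d}d^{-202/\ep^2}$. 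The claimed bound $8\frac{d_G}{d}d^{-100/\ep^2}$ is much stronger and beats even the trivial bound $K\le n/m$.
\end{itemize}
A smaller, fixable point: trimming a very unbalanced pair via Fact~\ref{slicing} to relative size $\alpha$ costs a factor $1/\alpha$ in regularity. You should keep the classes balanced during the increments instead, for example by averaging the witnesses down to equal relative size.
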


We remark, that in~\cite{CsBundle} a slightly stronger result is proved: for $i\ge 1$ the $G[A_i, B_i]$ pairs
are not only super-regular, there is an upper bound for the degrees in such a pair. We do not need this
stronger notion in the present paper.

The sets $A_1, \ldots, A_K$ and $B_1, \ldots, B_K$ are called the {\it non-exceptional clusters} of the decomposition,
and the sets $A_0$ and $B_0$ are the {\it exceptional clusters,} analogously to the decomposition of the Regularity lemma. 
Note, that the exceptional clusters, while can be made small by choosing $d$ to be small, could be much larger than the
non-exceptional clusters. Analogously to the Regularity lemma, sometimes we will call the union of the
$G[A_i, B_i]$ super-regular pairs the {\it reduced graph} of $G.$ The vertices of this reduced graph are the clusters, the edges are the 
$G[A_i, B_i]$ super-regular pairs, these constitute a matching in the reduced graph.
Note, that the $G[A_i, B_i]$ pairs may have only a very small fraction of the edges of $G$ itself.
Hence, the vast majority of the edges of $G$ do not belong to quasirandom subgraphs.
On the other hand, we do not need that $n$ is a tower function of $\ep.$ 

The partitioning in Theorem~\ref{pontfelbontas} is less powerful than that of the Regularity lemma. 
Still, the quasirandomness of the
$G[A_i, B_i]$ pairs, together with the (unstructured) set of remaining edges of $G$ not belonging to these pairs,
can be used in embedding problems. This is the governing idea which enables us to use the
above decomposition theorem for tree embedding.

\subsection{Probabilistic tools}

We use random methods at various points in the paper, and need large deviation bounds for discrete probability distributions.
The following inequality, a generalized version of Chernoff's bound,  is Theorem 2.8 in~\cite{JLR}.

\begin{theorem}\label{Chernoff} 
Assume that $X$ is the sum of $k$ independent indicator random variables: $X=X_1+\ldots+X_k.$ If $0\le \lambda \le 3/2,$ then
$$P(|X-\mathbb{E}[X]|\ge \lambda \mathbb{E}[X])\le 2e^{-\frac{\lambda^2}{3} \mathbb{E}[X]}.$$ 
\end{theorem}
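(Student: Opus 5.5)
We prove this with the standard exponential moment (Cramér--Chernoff) method, handling the upper and lower tails separately and combining them with a union bound. That union bound is where the factor $2$ comes from. Write $X_i\sim \mathrm{Bernoulli}(p_i)$ and $\mu=\mathbb{E}[X]=\sum p_i$. By independence and $1+x\le e^x$,
$$\mathbb{E}[e^{tX}]=\prod_{i=1}^k\bigl(1+p_i(e^t-1)\bigr)\le \exp\bigl(\mu(e^t-1)\bigr)$$
for every real $t$. This bound replaces $X$ by a Poisson variable with the same mean, which is the worst case for both tails.

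\emph{Upper tail.} For $t>0$, Markov's inequality applied to $e^{tX}$ gives $P(X\ge (1+\lambda)\mu)\le \exp\bigl(\mu(e^t-1)-t(1+\lambda)\mu\bigr)$. Choosing $t=\ln(1+\lambda)$ yields $P(X\ge(1+\lambda)\mu)\le \exp(-\mu\varphi(\lambda))$, where $\varphi(x)=(1+x)\ln(1+x)-x$.

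\emph{Lower tail.} For $0<\lambda<1$, apply Markov to $e^{-tX}$ with $t=-\ln(1-\lambda)>0$. The same computation gives $P(X\le(1-\lambda)\mu)\le\exp(-\mu\varphi(-\lambda))$. The boundary cases are trivial. For $\lambda>1$ the lower-tail event is empty since $X\ge 0$. For $\lambda=1$ the event is $\{X=0\}$, and $P(X=0)=\prod(1-p_i)\le e^{-\mu}$.

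The remaining work is the elementary calculus comparing $\varphi$ with $\lambda^2/3$, and this is where the restriction $\lambda\le 3/2$ enters. For the lower tail, the Taylor series $\varphi(-x)=\sum_{j\ge2}x^j/(j(j-1))$ has nonnegative terms, so $\varphi(-x)\ge x^2/2\ge x^2/3$ for $0\le x<1$. For the upper tail we use the classical Bernstein-type estimate
$$\varphi(x)\ge \frac{x^2}{2(1+x/3)}\qquad (x\ge 0).$$
To prove it, let $g(x)=\varphi(x)-x^2/(2+2x/3)$. Then $g(0)=g'(0)=0$, and one checks $g''(x)=\frac{1}{1+x}-\frac{27}{(3+x)^3}\ge 0$, which is equivalent to $(3+x)^3\ge 27(1+x)$ and holds after expanding. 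Since $2(1+\lambda/3)\le 3$ exactly when $\lambda\le 3/2$, we get $\varphi(\lambda)\ge\lambda^2/3$ on $[0,3/2]$.

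Both tails are therefore at most $e^{-\lambda^2\mu/3}$, and the union bound gives the stated inequality. The only step needing care is verifying the inequality for $\varphi$, in particular the sign of $g''$. Everything else is routine.
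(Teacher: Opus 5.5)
Your proof is correct. It is the standard Cram\'er--Chernoff argument: Poisson domination $\mathbb{E}[e^{tX}]\le e^{\mu(e^t-1)}$, optimisation giving $e^{-\mu\varphi(\pm\lambda)}$, then $\varphi(x)\ge x^2/(2(1+x/3))$ and $\varphi(-x)\ge x^2/2$. This is the same line of proof as in the source the paper cites for this statement (Theorem 2.8 of Janson, {\L}uczak and Ruci\'nski); the paper itself gives no proof. The only blemish is cosmetic: when $\mu=0$ the lower-tail event is not empty for $\lambda>1$, but then the right-hand side equals $2$ and there is nothing to prove.
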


We will also need another inequality, in which we do not assume independence of the variables. It was proved by Hoeffding and also by Azuma, sometimes it is called Azuma's inequality.
A sequence of random variables $X_0, X_1, \ldots$ is a {\it martingale} if 
${\mathbb E}[X_{i+1}| X_0, \ldots,  X_i] = X_i$ for each $i \ge 0.$ 
We have the following important inequality, see e.g.~in~\cite{AS}.

\begin{theorem}[Azuma-Hoeffding inequality]\label{azuma}
Assume that the sequence $X_0, X_1, \ldots $ is a martingale, and let $\sigma_i> 0$ for all $i\ge 1.$
If $|X_i-X_{i-1}|\le \sigma_i$ for each $i\ge 1$ and $a>0$ is a real number, then for each $n\ge 1$ we have 
$$P(|X_n-X_0|\ge a)\le2 e^{-a^2/2\sigma^2}$$
where $\sigma^2=\sum_{i=1}^t \sigma^2_i.$ 
\end{theorem}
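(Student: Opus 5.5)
We prove this standard inequality by the exponential moment (Chernoff) method applied to the martingale differences. Set $Y_i=X_i-X_{i-1}$ for $i\ge 1$, so that $X_n-X_0=\sum_{i=1}^n Y_i$, and read $\sigma^2$ as $\sum_{i=1}^n\sigma_i^2$ (the index $t$ in the statement should be $n$). By the martingale property, ${\mathbb E}[Y_i\mid X_0,\ldots,X_{i-1}]=0$ and $|Y_i|\le \sigma_i$. By symmetry (apply the argument to the martingale $-X_i$ as well) it suffices to show
$$P(X_n-X_0\ge a)\le e^{-a^2/2\sigma^2};$$
a union bound over the two tails then gives the factor $2$.

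The key step is a conditional version of Hoeffding's lemma. If $Y$ is a random variable with $|Y|\le c$ and ${\mathbb E}[Y\mid \mathcal F]=0$, then for every real $\lambda$
$${\mathbb E}[e^{\lambda Y}\mid \mathcal F]\le \cosh(\lambda c)\le e^{\lambda^2c^2/2}.$$
To see this, use convexity of $y\mapsto e^{\lambda y}$ on $[-c,c]$: we have $e^{\lambda y}\le \frac{c-y}{2c}e^{-\lambda c}+\frac{c+y}{2c}e^{\lambda c}$. Take conditional expectations and use ${\mathbb E}[Y\mid\mathcal F]=0$ to obtain $\cosh(\lambda c)$. Then compare Taylor series termwise: $(2k)!\ge 2^k k!$ gives $\cosh x\le e^{x^2/2}$.

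Next we iterate by the tower property. Let $\mathcal F_{i}$ be generated by $X_0,\ldots,X_{i}$. Then
$${\mathbb E}\bigl[e^{\lambda(X_n-X_0)}\bigr]={\mathbb E}\Bigl[e^{\lambda(X_{n-1}-X_0)}\,{\mathbb E}[e^{\lambda Y_n}\mid \mathcal F_{n-1}]\Bigr]\le e^{\lambda^2\sigma_n^2/2}\,{\mathbb E}\bigl[e^{\lambda(X_{n-1}-X_0)}\bigr].$$
By induction on $n$ this gives ${\mathbb E}[e^{\lambda(X_n-X_0)}]\le e^{\lambda^2\sigma^2/2}$ for all $\lambda>0$.

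Finally, Markov's inequality yields
$$P(X_n-X_0\ge a)\le e^{-\lambda a+\lambda^2\sigma^2/2}.$$
Choosing $\lambda=a/\sigma^2$ gives $e^{-a^2/2\sigma^2}$, as required.

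The only nontrivial point is the conditional Hoeffding bound: it must hold pointwise in the conditioning, and the convexity argument uses only $|Y_i|\le\sigma_i$ and the vanishing conditional mean. Strictly speaking, the martingale definition here conditions on $X_0,\ldots,X_i$, which is exactly the filtration used above, so no measurability issues arise. Everything else is routine.
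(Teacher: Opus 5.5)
Your proof is correct. It is the standard exponential-moment argument: the convexity bound $\mathbb{E}[e^{\lambda Y_i}\mid \mathcal{F}_{i-1}]\le \cosh(\lambda\sigma_i)\le e^{\lambda^2\sigma_i^2/2}$, iteration via the tower property, Markov's inequality with $\lambda=a/\sigma^2$, and a union bound over the two tails. Reading the undefined index $t$ as $n$ is right. The paper gives no proof of its own and simply cites Alon--Spencer, whose proof is essentially the one you wrote.
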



\section{Preprocessing of $G$}

In order to prove Theorem~\ref{fa}, we need to preprocess the host graph $G$ and the tree $T$ as well. In this section we 
focus on $G.$

First we find a bipartite $r$-regular spanning subgraph of $G$ in subsection~\ref{bipreg}. This enables us to use Theorem~\ref{pontfelbontas}, and to construct vertex disjoint quasirandom pairs 
$G[A_1, B_1],\ldots, G[A_K, B_K],$ which together cover almost every vertex of $G.$ 
Since we want to embed a spanning tree, the exceptional vertices of $V_0=A_0\cup B_0$ will be inserted
into the non-exceptional clusters $A_1, \ldots, A_K$ and $B_1, \ldots, B_K.$
A few vertices will also change cluster in order to make sure that every pair in the decomposition is balanced. This is not an easy
task, it is the subject of subsections~\ref{reloclem} and~\ref{balanceproc}. 
Our main goal in this section is to prove the statement below.
 
\begin{prop}\label{Gdecomp}
Assume that $\ep, d, \nu$ and $\gamma$ are numbers such that $0<\ep\ll d\ll\nu \ll \gamma \ll 1.$ Let
$G=(V, E)$ be an $n$-vertex, $\gamma$-non-extremal graph having minimum degree $\delta(G)\ge (1/2-\nu)n,$ where $n\ge n_0=n_0(\nu).$ Then one can divide $V$ into the 
disjoint sets $A$ and $B$ with $|A|=|B|=\lfloor n/2 \rfloor$ and possibly an extra vertex $v_0$ such that
the following hold. The subsets $A$ and $B$ are decomposed into disjoint clusters: 
$A=\widehat{A}_1\cup \ldots \cup \widehat{A}_K$ and $B=\widehat{B}_1\cup \ldots \cup \widehat{B}_K,$ such that for every $i\in [K]$ we have:

\begin{enumerate}
\item $m_i=|\widehat{A}_i|=|\widehat{B}_i|\ge d^{101/\ep^2}\cdot \frac{n}{4},$ 
\item $\widehat{A}_i=A'_i\cup A''_i,$ $A'_i\cap A''_i=\emptyset$ and $\widehat{B}_i=B'_i\cup B''_i,$ $B'_i\cap B''_i=\emptyset,$
\item $|A''_i|, |B''_i| \le \gamma^4 m_i$
\item $G[A'_i, B'_i]$ is a $(2\ep, d/3)$-super-regular pair, 
\item if $v\in A''_i,$ then $deg(v, B'_i)\ge \gamma^3|B'_i|/3,$ and $deg(w, A'_i)\ge \gamma^3|A'_i|/3$ 
for every $w\in B''_i.$
\end{enumerate}
\end{prop}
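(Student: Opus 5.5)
The plan follows the roadmap the section itself announces: first find a spanning bipartite almost-regular subgraph of $G$, then apply Theorem~\ref{pontfelbontas} to it, and finally absorb the exceptional vertices while rebalancing the clusters.

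\textbf{Step 1: a balanced bipartite almost-regular subgraph.} Split $V$ into two halves $A,B$ of size $\lfloor n/2\rfloor$, possibly leaving out one vertex $v_0$. Choose the split uniformly at random. By Theorem~\ref{Chernoff}, every vertex then has about $\deg(v)/2\ge(1/4-\nu)n$ neighbours on each side. Next I would find a spanning subgraph $H\subset G[A,B]$ whose degrees all lie in $[r(1-\lambda),r]$ with $r\ge d^{1/3}n$. For this I would take $r$ of order $\gamma n$, say, and extract an $r$-factor, or a near $r$-factor, via a flow/Tutte-type argument. This is where non-extremality enters. A bipartite degree-constrained subgraph fails to exist only if some sets $X\subset A$, $Y\subset B$ with $|X|,|Y|$ large have $e(X,B\setminus Y)$ too small. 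Together with $\delta(G)\ge(1/2-\nu)n$, such a deficiency would force $G$ to be $\gamma$-close to either two disjoint cliques or a complete bipartite graph, contradicting $\gamma$-non-extremality. Almost-regularity (slack $\lambda\le d^4/2$) suffices, which gives room for the few low-degree problem vertices.

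\textbf{Step 2: decomposition.} Apply Theorem~\ref{pontfelbontas} to $H$ with parameters $\ep,d$. This yields pairs $(A_i,B_i)$ that are $(\ep',d_i)$-super-regular with $d_i\ge d-3\ep$, with sizes differing by at most $2\ep^2|A_i|$, and with an exceptional set of size $|A_0\cup B_0|<8d^{1/3}n\ll\gamma^4 n$. We set $A'_i,B'_i$ to be (slightly trimmed) $A_i,B_i$. Trimming a few vertices to equalise sizes preserves $(2\ep,d/3)$-super-regularity by Fact~\ref{slicing} and Fact~\ref{reg}.

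\textbf{Step 3: distributing exceptional vertices and balancing (main obstacle).} Each exceptional vertex $v\in A_0$ must be placed into some $A''_i$ with $\deg(v,B'_i)\ge\gamma^3|B'_i|/3$. Since $\deg_G(v)\ge(1/2-\nu)n$, averaging shows that $v$ has relative degree at least $\gamma^3$ into clusters of total size linear in $n$. However, those clusters may lie on the wrong side: $v$'s neighbours could sit mostly in the $A$-clusters. I would therefore allow $v$ to switch sides, putting it into $B''_j$ when it has many neighbours in $A'_j$. Assigning vertices greedily, or randomly among admissible clusters, keeps $|A''_i|,|B''_i|\le\gamma^4 m_i$, because the exceptional set is much smaller than $\gamma^4 n$ and the clusters have comparable sizes.

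The hard part is restoring $|A|=|B|$ and $|\widehat A_i|=|\widehat B_i|$ after these switches. For this I would prove a relocation lemma: along paths in an auxiliary cluster graph, move vertices one at a time from one cluster to another. A vertex of $A'_i$ having $\gamma^3$-many neighbours in some $A'_j$ can be moved into $B''_j$, shifting the imbalance. Non-extremality of $G$ guarantees that this auxiliary graph is connected and contains an odd cycle. The odd cycle is needed for parity: it lets surplus on the $A$ side be converted into surplus on the $B$ side. If no such structure existed, the clusters would split into groups spanning few edges, or into a bipartite-like structure, giving sets of size $n/2$ with $e(A,B)\le\gamma n^2$. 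Only $O(d^{1/3}n)$ relocations are needed, spread across many clusters, so bounds (3) and (5) survive. Lemma~\ref{beilleszt} then confirms that the enlarged pairs remain super-regular in the required sense.
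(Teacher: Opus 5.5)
Your Steps 1 and 2 follow the paper's route. You take a random halving, then a regular spanning subgraph of $G[A,B]$ from a Hall-type condition guaranteed by non-extremality (the paper makes your heuristic precise via the sets $S_v$, Lemma~\ref{NonExt} and Lemma~\ref{1Fact}), then Theorem~\ref{pontfelbontas}. Step 3, however, has a genuine gap.

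Property 4 concerns $G[A'_i,B'_i]$, where $A'_i$ is what remains of the original cluster after relocations have pulled vertices \emph{out} of it. Your relocations remove on the order of $|V_0|$ original vertices, potentially $\Theta(d^{1/3}n)$. So a cluster may lose a $d^{1/3}$-fraction of itself (your target (3) tolerates $\gamma^4$). Meanwhile a vertex of $B_i$ is only guaranteed about $d|A_i|$ neighbours in $A_i$, and $d\ll d^{1/3}$. Nothing in your scheme stops it from losing all of them.

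Lemma~\ref{beilleszt} does not help. It handles inserting at most an $\ep^2$-fraction of vertices, not deletions, and here insertions reach $\gamma^4 m_i\gg\ep^2 m_i$. That is exactly why the proposition does not claim $G[\widehat A_i,\widehat B_i]$ is super-regular.

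The missing idea is the paper's random split $A_i=A_i^{(1)}\cup A_i^{(2)}$, $B_i=B_i^{(1)}\cup B_i^{(2)}$, made before balancing. Every vertex moved out of a cluster is taken from the first half only. Hence $A_i^{(2)}\subseteq A'_i$, and by Chernoff every vertex keeps at least a third of its degree there, which gives the bound $d/3$. The relocation triples are also chosen at random, and a martingale argument (Claim~\ref{kisvesszo}) controls how often any single cluster is used. Your appeal to ``comparable cluster sizes'' for (3) is also unavailable, since Theorem~\ref{pontfelbontas} gives no upper bound on cluster sizes.

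Second, the relocation lemma you sketch is too weak. Connectivity plus an odd cycle in an auxiliary cluster graph only yields paths of length up to about $2K$. Theorem~\ref{pontfelbontas} bounds $K$ merely by roughly $d^{-100/\ep^2}\gg d^{-1/3}$. With $\Theta(d^{1/3}n)$ relocations, routing along such paths can require more than $n$ moves. Also, an auxiliary edge witnessed by few vertices cannot be reused that often.

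What is needed is that for \emph{every} vertex $v$ and \emph{every} cluster there are at least $\gamma^6 n/30$ disjoint relocation paths with just two intermediate vertices. Lemma~\ref{reloc} extracts exactly this from the bipartite non-extremality of Lemma~\ref{ParosNonExtr}, via the index sets $I$, $J$, $J(u)$.

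Finally, your side-switching and parity discussion addresses a non-problem. Your own Step 1 gives every vertex at least $(1/4-2\nu)n$ neighbours in each of $A$ and $B$. So a vertex of $A_0$ has relative degree at least $\gamma^3$ into $B$-clusters of total size about $n/4$, and can stay on the $A$ side. Since $|A|=|B|$ is then never disturbed, balancing within each side suffices and no odd cycle is needed.
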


Observe, that the proposition does not claim that the $G[\widehat{A}_i, \widehat{B}_i]$ pairs are super-regular.
Instead, we have a large super-regular sub-pair of $G[\widehat{A}_i, \widehat{B}_i]$ 
(large, since the clusters $|A'_i|, |B'_i|\ge (1-\gamma^4)m_i$), and vertices in 
the {\it irregular} parts $A''_i$ and $B''_i$ have large degrees to the opposite side. We call $\bigcup_{i} (A''_i\cup B''_i)$ the
set of {\it irregular vertices}. The irregular vertices need a special care when proving Theorem~\ref{fa}. 

Proposition~\ref{Gdecomp} is a general result, we think its use is not restricted to tree embedding, it could be 
applied to other problems. 

\subsection{Finding a bipartite $r$-regular spanning subgraph of $G$}\label{bipreg}

We begin with a structural fact about $\gamma$-non-extremal graphs. Recall, that $G$ is a $\gamma$-non-extremal graph with 
$\delta(G)\ge (1/2-\nu)n,$
where $\nu \ll \gamma.$ 
For every vertex $v\in V$  we 
define a subset of $V$: $$S_v=\{u\in V: deg(u, \overline{N(v)})< \gamma n/2\},$$
where $\overline{N(v)}$ denotes the complement of $N(v).$

\begin{lemma}\label{NonExt}
Let $v\in V$ be any vertex with $deg(v)\le n/2+\gamma n/2.$ Then $|S_v|\le n/2-\gamma n/2.$ 
\end{lemma}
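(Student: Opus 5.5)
Argue by contradiction, using the $\gamma$-non-extremality of $G$. Suppose $deg(v)\le n/2+\gamma n/2$ but $|S_v|> n/2-\gamma n/2$. Put $M=\overline{N(v)}$, the set of vertices not adjacent to $v$ (we include $v$ itself in $M$). Then
$$|M|=n-deg(v)\ge n/2-\gamma n/2,$$
and by definition every $u\in S_v$ satisfies $deg(u,M)<\gamma n/2$. So there are few edges between two sets of size roughly $n/2$. We will enlarge or trim them to sets $A,B$ of size exactly $\lfloor n/2\rfloor$ with $e(A,B)\le \gamma n^2$, which contradicts non-extremality.

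\textbf{Step 1: choosing the sets.} Take $A\subseteq S_v$ with $|A|=\lfloor n/2\rfloor-\lceil\gamma n/2\rceil$, and then add arbitrary vertices so that $|A|=\lfloor n/2\rfloor$. This adds at most about $\gamma n/2$ vertices. Similarly, take $B\subseteq M$ of size $\min(|M|,\lfloor n/2\rfloor)$, padded if necessary with at most $\gamma n/2+1$ arbitrary vertices. The sets $A$ and $B$ need not be disjoint; the definition allows overlapping sets, so this causes no trouble.

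\textbf{Step 2: counting.} Let $A_0=A\cap S_v$ and $B_0=B\cap M$. We bound $e(A,B)$ by three contributions:
\begin{itemize}
\item edges between $A_0$ and $B_0$: at most $|A_0|\cdot\gamma n/2\le \gamma n^2/4$;
\item edges touching the padding vertices of $A$: at most $(\gamma n/2+1)\cdot n/2$;
\item edges touching the padding vertices of $B$: at most $(\gamma n/2+1)\cdot n/2$.
\end{itemize}
Here $e(A,B)$ counts pairs, and any double counting from overlaps only affects constants. The total is
$$e(A,B)\le \gamma n^2/4+\gamma n^2/4+\gamma n^2/4+O(n)<\gamma n^2$$
for large $n$. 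This contradicts $\gamma$-non-extremality, so $|S_v|\le n/2-\gamma n/2$.

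\textbf{Main obstacle.} The only delicate point is the bookkeeping of constants: the padding must cost less than $\gamma n^2$ in total. Each of the three terms is at most about $\gamma n^2/4$, so together they stay below $\gamma n^2$ with room to spare. Rounding errors of order $O(n)$ are absorbed once $n$ is large.

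One more detail needs care. If the definition requires $e(A,B)$ to count edges with one endpoint in each set, then an edge with both endpoints in $A\cap B$ may be counted twice. Even then each of the three bounds at most doubles within the $\gamma n^2/4$ budgets only if we are more careful. In that case we can instead take $A_0$ and $B_0$ disjoint: remove $A_0\cap B_0$ from one side before padding. Because $S_v$-vertices have degree less than $\gamma n/2$ into $M$, the overlap contributes few edges anyway, and the same estimate goes through.
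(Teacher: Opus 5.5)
Your proof is correct and follows the paper's argument: assuming $|S_v|>n/2-\gamma n/2$, pad $S_v$ and $\overline{N(v)}$ to size $\lfloor n/2\rfloor$ with at most about $\gamma n/2$ extra vertices each, and observe that the edges from $S_v$ into $\overline{N(v)}$, together with those at padding vertices, number fewer than $\gamma n^2$ (trimming first gives you $3\gamma n^2/4+O(n)$, slightly tighter than the paper's $\gamma n^2/2+\gamma n^2/2$). Your closing paragraph is unnecessary and its proposed fix should be dropped: your three bounds already control $\sum_{a\in A}\deg(a,B)$, which dominates $e(A,B)$ under either counting convention, whereas deleting $A_0\cap B_0$ from one side could force far more than $\gamma n/2$ padding vertices, since $S_v\cap\overline{N(v)}$ need not be small.
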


\noindent {\bf Proof:} Suppose on the contrary that $|S_v|>n/2-\gamma n/2.$ We count the number of edges between $S_v$ and $\overline{N(v)}.$
First observe, that $$e(S_v, \overline{N(v)})< \frac{\gamma n}{2} |S_v|\le \gamma n^2/2$$ by the definition of $S_v.$ 

Next, if necessary, we add extra vertices to $S_v$ and $\overline{N(v)}$ so that the resulting new sets both have
at least $n/2$ vertices. If $S_v$ or $\overline{N(v)}$ has at least $n/2$ vertices, we leave it as is.
For complementing $S_v$ we need less than $\gamma n/2$ new vertices using our assumption on the cardinality of $S_v.$ Similarly, we need at most 
$\gamma n/2$
new vertices for $\overline{N(v)},$ since $deg(v)\le n/2+\gamma n/2.$ Denote the new sets we have just obtained by $S'_v$ and $\overline{N(v)}'.$

By $\gamma$-non-extremality of $G,$ we have at least $\gamma n^2$ edges going in between $S'_v$ and $\overline{N(v)}'.$ 
On the other hand, with the newly added vertices we could increase the number of edges  
between $S'_v$ and $\overline{N(v)}'$ by at most $2\gamma \frac{n}{2} \cdot \frac{n}{2}=\gamma n^2/2.$ Hence,
we can add at most $\gamma n^2/2$ edges to the strictly less than $\gamma n^2/2$ edges that were supposedly present between $S_v$ and 
$\overline{N(v)},$ so the total number of edges between $S'_v$ and $\overline{N(v)}'$ is less than $\gamma n^2$ -- thus we arrived at a contradiction.
\qedf

\medskip

Next we randomly split $V$ into two sets, $A$ and $B,$ of sizes $|A|=|B|=n/2,$ if $n$ is even. If $n$ is odd, we 
set aside an arbitrary vertex, denoted by $v_0,$ before the random splitting. That vertex will be inserted back to $G$ at the
end of the preprocessing. 

The random splitting procedure goes as follows. In the beginning, $A$ and $B$ are empty sets. For every $v\in V$ we flip a coin, independently from other choices. If the outcome is heads, we add $v$ to $A,$ otherwise
$v$ is added to $B.$ 
If $|A|\neq |B|,$ we will make them equal as follows. Say, that after the random splitting $|A|>|B|.$ Then we pick $(|A|-|B|)/2$ vertices of $A$ arbitrarily, and relocate them into $B.$ Note that with high probability the number of relocated vertices is at most $O(\sqrt{n\log n}).$ 

The proof of the lemma below is implied by Theorem~\ref{Chernoff}, we leave the details for the reader.

\begin{lemma}\label{VelTul}
For every $v\in V$ we have $$deg(v, A), deg(v, B)\ge \frac{deg(v)}{2}-\nu n \ge n/4-2\nu n$$ with high probability. Furthermore, 
if $deg(v)\le n/2+\gamma n/2,$ then the following properties also hold with high probability:
\begin{itemize}

\item[(i)] $|S_v\cap A|, |S_v\cap B| \le n/4 -\gamma n/5,$ and

\item[(ii)] if $w\not\in S_v,$ then $deg(w, \overline{N(v)}\cap A), deg(w, \overline{N(v)}\cap B)\ge \gamma n/5.$

\end{itemize}
\end{lemma}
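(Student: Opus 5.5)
We will derive Lemma~\ref{VelTul} from Theorem~\ref{Chernoff}, applied to the independent coin flips, followed by a union bound over polynomially many events. It is convenient to first analyse the purely random split, where each vertex lands in $A$ independently with probability $1/2$, and then account for the balancing step. That step moves at most $O(\sqrt{n\log n})=o(\nu n)$ vertices (again by Theorem~\ref{Chernoff} applied to $|A|$). Hence every degree count $deg(w, X\cap A)$ and every set size $|X\cap A|$ changes by at most $o(\nu n)$, which is absorbed in the error terms. The vertex $v_0$, if set aside, changes the counts by at most one and is handled the same way.

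\textbf{Degree bound.} For a fixed $v$, the quantity $deg(v,A)$ is a sum of $deg(v)$ independent indicators with mean $deg(v)/2\ge n/4-\nu n/2$. Choose $\lambda=\nu n/(2\mathbb{E}[X])$, which is at least of order $\nu$ and at most $3/2$. Theorem~\ref{Chernoff} then gives deviation at least $\nu n/2$ with probability at most $2\exp(-\Omega(\nu^2 n))$. The same holds for $B$. Adding the $o(\nu n)$ loss from balancing gives $deg(v,A),deg(v,B)\ge deg(v)/2-\nu n$. Since $deg(v)\ge (1/2-\nu)n$, this is at least $n/4-\nu n/2-\nu n\ge n/4-2\nu n$. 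A union bound over the $n$ vertices keeps the failure probability at most $2n\,e^{-\Omega(\nu^2 n)}=o(1)$.

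\textbf{Property (i).} Fix $v$ with $deg(v)\le n/2+\gamma n/2$. By Lemma~\ref{NonExt}, $|S_v|\le n/2-\gamma n/2$, so $\mathbb{E}|S_v\cap A|\le n/4-\gamma n/4$. The set $S_v$ is determined by $G$ alone, not by the random split, so $|S_v\cap A|$ is a sum of independent indicators. A deviation of $\gamma n/20-o(n)$ has probability at most $e^{-\Omega(\gamma^2 n)}$. If $|S_v|$ is small, we first pad $S_v$ with arbitrary vertices up to size $n/2-\gamma n/2$; this only increases the count, so the bound still applies, and the relative deviation $\lambda$ stays bounded by a constant. We get $|S_v\cap A|\le n/4-\gamma n/4+\gamma n/20=n/4-\gamma n/5$, and the same for $B$.

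\textbf{Property (ii).} For $w\notin S_v$ we have $deg(w,\overline{N(v)})\ge\gamma n/2$. The count $deg(w,\overline{N(v)}\cap A)$ is a sum of independent indicators, again because $\overline{N(v)}$ is fixed by $G$. Its mean is at least $\gamma n/4$, so it falls below $\gamma n/4-\gamma n/20-o(n)\ge\gamma n/5$ only with probability at most $e^{-\Omega(\gamma^2 n)}$. The same holds for $B$.

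A union bound over all pairs $(v,w)$, at most $n^2$ of them, together with the $n$ events from (i), gives total failure probability $O(n^2)e^{-\Omega(\gamma^2 n)}+O(n)e^{-\Omega(\nu^2 n)}=o(1)$ for $n\ge n_0(\nu)$.

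There is no real obstacle here. The only points needing care are that $S_v$ and $\overline{N(v)}$ depend only on $G$, so independence is legitimate, and that the deterministic rebalancing step costs only $o(n)$, which is negligible against $\nu n$ and $\gamma n$.
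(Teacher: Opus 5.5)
Your proposal is correct and follows exactly the route the paper intends: the paper only remarks that the lemma follows from Theorem~\ref{Chernoff} and leaves the details to the reader. You supply those details (Chernoff on the independent coin flips, Lemma~\ref{NonExt} to bound the mean in (i), an $o(n)$ correction for the rebalancing step and for $v_0$, and a union bound over $v$ and over pairs $(v,w)$). The only blemish is cosmetic: in (ii) the inequality $\gamma n/4-\gamma n/20-o(n)\ge\gamma n/5$ is off by the $o(n)$ term, and you fix it exactly as you did in (i), by taking a Chernoff deviation of $\gamma n/20-o(n)$.
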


The following lemma shows that for spanning random bipartite subgraphs a certain form of non-extremality is inherited with high probability. 

\begin{lemma}\label{ParosNonExtr}
Let $G=(V, E)$ be a $\gamma$-non-extremal graph on $n$ vertices with minimum degree $\delta(G)\ge (1/2-\nu)n.$ Divide $V$ randomly into two parts, $A$ and $B,$ having equal cardinality $n/2,$ as described above. Then the following holds with high probability: 
for every $X\subset A,$ $Y\subset B$ with $|X|=|Y|=n/4$ we have $e(X, Y)\ge \gamma^2n^2/50.$
\end{lemma}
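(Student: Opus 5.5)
The plan is to argue by contradiction. Take a random split $V=A\cup B$ satisfying the high probability conclusions of Lemma~\ref{VelTul}, in particular $deg(v,A),deg(v,B)=deg(v)/2\pm\nu n$ for every $v$. Suppose some $X\subset A$, $Y\subset B$ with $|X|=|Y|=n/4$ have $e(X,Y)<\gamma^2n^2/50$. The aim is to build two sets of size $n/2$ in $V$ that span fewer than $\gamma n^2$ edges, contradicting $\gamma$-non-extremality.

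I will not union bound over all pairs $(X,Y)$. There are about $4^n$ of them, which is too many for concentration of $e(X,Y)$ at precision $\gamma^2 n^2$. Instead, the only randomness I use is the per-vertex degree halving of Lemma~\ref{VelTul}, together with the deterministic non-extremality of $G$.

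\textbf{Step 1: $X$ and $Y$ nearly fill their complements.} Since $e(X,Y)$ is small, all but at most $\gamma n/10$ vertices $v\in X$ satisfy $deg(v,Y)\le \gamma n/5$. For such $v$ we have $deg(v,B)\ge n/4-2\nu n$, so $deg(v,B\setminus Y)\ge n/4-\gamma n/4$, i.e.\ $v$ is adjacent to almost all of $B\setminus Y$. Symmetrically, almost every $u\in Y$ is adjacent to almost all of $A\setminus X$. Hence $e(X,B\setminus Y)$ and $e(Y,A\setminus X)$ are both $n^2/16-O(\gamma n^2)$.

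\textbf{Step 2: transfer information from $B$ to $A$ by degree balance.} Summing $deg(v,A)\le deg(v,B)+2\nu n$ over $v\in X$ gives
$$2e(X)+e(X,A\setminus X)\le e(X,Y)+e(X,B\setminus Y)+\nu n^2\le n^2/16+\gamma^2n^2/50+\nu n^2 .$$
The analogous inequality holds for $Y$. Summing the reverse inequality $deg(v,B)\le deg(v,A)+2\nu n$ over $X$ gives a matching lower bound, and the same can be done over $A\setminus X$ and $B\setminus Y$. Combining these with Step 1, I want to conclude that one of the following two cases occurs.
\begin{itemize}
\item[(a)] $e(X)$ and $e(Y)$ are both $O(\gamma^2 n^2+\nu n^2)$. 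Then $X\cup Y$ is an almost independent set of size $n/2$. Taking both test sets in the definition equal to $X\cup Y$ gives $e(X\cup Y,X\cup Y)<\gamma n^2$, a contradiction.
\item[(b)] $e(X,A\setminus X)$ and $e(Y,B\setminus Y)$ are both small. Then $X\cup Y$ is almost disconnected from its complement, which also has size $n/2$. Taking the test sets $X\cup Y$ and $V\setminus(X\cup Y)$ contradicts non-extremality, using $e(X,Y)$ small and the edge count from Step 1.
\end{itemize}

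The main obstacle is Step 2, namely forcing the dichotomy (a)/(b). The degree balance controls only the sums $2e(X)+e(X,A\setminus X)$, not the two terms separately. The first thing I would try is to also apply non-extremality to the mixed sets $X\cup(B\setminus Y)$ and $Y\cup(A\setminus X)$. The edges between them are exactly
$$e(X,Y)+e(X,A\setminus X)+e(B\setminus Y,Y)+e(B\setminus Y,A\setminus X),$$
and this gives lower bounds that should pin down which case occurs.

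If this accounting leaves a gap, I would add a vertex-level argument via Lemma~\ref{NonExt} and Lemma~\ref{VelTul}(ii). Pick a typical $v\in X$ with small degree into $Y$. Then $Y$ lies mostly in $\overline{N(v)}\cap B$. Lemma~\ref{VelTul}(ii) says every $w\notin S_v$ has $\gamma n/5$ neighbours in $\overline{N(v)}\cap B$. Lemma~\ref{VelTul}(i) says that at most $n/4-\gamma n/5$ vertices of $X$ lie in $S_v$. So at least $\gamma n/5$ vertices of $X$ lie outside $S_v$, and each of them has $\gamma n/5$ neighbours in $\overline{N(v)}\cap B$. Since $\overline{N(v)}\cap B$ is essentially $Y$, this gives about $(\gamma n/5)^2=\gamma^2n^2/25$ edges between $X$ and $Y$, which exceeds the assumed $\gamma^2n^2/50$ and closes the argument. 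For this to apply I need $deg(v)\le n/2+\gamma n/2$, and I would handle the high-degree case separately with a direct count.
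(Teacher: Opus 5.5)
Your closing vertex-level argument is exactly the paper's proof, and it is complete on its own. Steps 1--2 can be deleted entirely. Two details need tightening.

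First, choose $v$ by averaging, so that $deg(v,Y)\le 4e(X,Y)/n<2\gamma^2n/25$. The threshold $\gamma n/5$ from Step 1 is too weak, because
$|(\overline{N(v)}\cap B)\setminus Y|\le (n/2-deg(v,B))-(n/4-deg(v,Y))\le 2\nu n+deg(v,Y)$,
and this must be small compared with $\gamma n/5$. With the averaged $v$, each of the at least $\gamma n/5$ vertices of $X\setminus S_v$ has at least $\gamma n/5-2\nu n-deg(v,Y)\ge \gamma n/10$ neighbours in $Y$. This gives $e(X,Y)\ge\gamma^2n^2/50$, the desired contradiction.

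Second, there is no high-degree case. Lemma~\ref{VelTul} gives $deg(v)\le 2deg(v,B)+2\nu n$, and $deg(v,B)\le n/4+deg(v,Y)$. Hence $deg(v)\le n/2+2deg(v,Y)+2\nu n<n/2+\gamma n/2$, so Lemma~\ref{VelTul}(i),(ii) apply to $v$ directly.

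Your main route, by contrast, cannot be completed. Case (b) uses the wrong test sets: by your own Step 1, $e(X,B\setminus Y)\approx n^2/16$, so $X\cup Y$ is far from isolated from its complement. More seriously, per-vertex degree halving plus non-extremality does not imply the lemma at all. Here is a counterexample.
\begin{itemize}
\item Take eight classes of size $n/8$, with $A=X_1\cup X_2\cup W_1\cup W_2$ and $B=Y_1\cup Y_2\cup U_1\cup U_2$.
\item Join $X=X_1\cup X_2$ completely to $U=U_1\cup U_2$, and $Y=Y_1\cup Y_2$ completely to $W=W_1\cup W_2$.
\item Put cliques on $X_j\cup W_j$ and on $Y_j\cup U_j$ for $j=1,2$.
\end{itemize}
This is a blow-up of the $3$-cube with a loop at every vertex. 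Every vertex has about $n/4$ neighbours in each of $A$ and $B$, and $\delta(G)\ge n/2-1$. Minimising the bilinear form over $4$-sets of classes shows $e(P,Q)\ge (1/32-o(1))n^2$ whenever $|P|=|Q|=\lfloor n/2\rfloor$, so $G$ is, say, $1/100$-non-extremal. Yet $e(X,Y)=0$. Moreover $2e(X)\approx e(X,A\setminus X)\approx n^2/32$, so neither (a) nor (b) holds, and no choice of test sets can give a contradiction.

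What fails for this split is precisely Lemma~\ref{VelTul}(ii). For $v\in X_1$, the vertices of $X_2$ lie outside $S_v$, but all their neighbours in $\overline{N(v)}$ are in $A$. So the pairwise concentration in (ii) is indispensable: your fallback uses it, and your main route deliberately avoids it.
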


\noindent {\bf Proof:} Assume, that $X\subset A$ and $Y\subset B$ both have $n/4$ vertices, and assume on the 
contrary, that $e(X, Y)< \gamma^2n^2/50.$ Then by averaging there exists a vertex $v\in X$ such that $deg(v, Y)\le \gamma^2n/12.$ This implies that $deg(v)\le n/2+\gamma n/2.$ Moreover, using Lemma~\ref{VelTul}, we must have $|\overline{Y}-N(v)|\le 2\nu n +\gamma^2n/12.$ Hence, using the definition of the set $S_v$ and Lemma~\ref{VelTul}, $A$ has at least $n/4+\gamma n/5$ such vertices
which all have at least $\gamma n/5-2\nu n-\gamma^2n/12\ge \gamma n/10$ neighbors in $Y.$ Since $X$ must contain at least $\gamma n/5$ from them, the number of edges between $X$ and $Y$ is more than $\gamma^2n^2/50,$ as desired.
\qedf

\medskip

Next we show that the induced subgraph $G[A, B]$ has an $r$-regular spanning subgraph with $r= \nu n.$ 
This is done by finding $r$ edge-disjoint perfect matchings in $G[A, B].$ After finding a 1-factor, we delete its
edges from $G[A, B],$ and look for another one. Thus, during this procedure we decrease the degrees
of the vertices. It is easy to see that for the existence of the $r$ edge-disjoint 1-factors it is sufficient to prove the following.

\begin{lemma}\label{1Fact}
Assume, that the edges of less than $r=\nu n$ edge-disjoint perfect matchings were deleted from $G[A, B].$ Denote $G'[A, B]$
the subgraph what is left. Then $G'[A, B]$ has a 1-factor.
\end{lemma}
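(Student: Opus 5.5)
We verify Hall's condition in $G'[A,B]$, where $|A|=|B|=n/2$. Removing fewer than $r=\nu n$ edge-disjoint perfect matchings lowers every degree by less than $\nu n$. By Lemma~\ref{VelTul}, every vertex of $G'[A,B]$ therefore has at least $n/4-3\nu n$ neighbors on the other side. By Lemma~\ref{ParosNonExtr}, $G[A,B]$ contains at least $\gamma^2n^2/50$ edges between any $X\subset A$, $Y\subset B$ with $|X|=|Y|=n/4$. Deleting the matchings removes at most $\nu n\cdot n/4$ of these edges, since each removed matching meets $X$ in at most $|X|=n/4$ edges. As $\nu\ll\gamma$, $G'$ still has at least $\gamma^2n^2/50-\nu n^2/4>\gamma^2n^2/60>0$ edges between such $X$ and $Y$.

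Let $X\subset A$ be nonempty; we show $|N_{G'}(X)|\ge |X|$. By symmetry of the argument we may also check sets in $B$, which lets us restrict to $|X|\le n/4$ (the standard reduction: a Hall violator in $A$ of size above $n/4$ yields one in $B$ of size below $n/4$). There are three cases.

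\emph{Small $X$.} If $|X|\le n/4-3\nu n$, any single vertex of $X$ already has at least $n/4-3\nu n\ge |X|$ neighbors.

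\emph{Intermediate $X$.} Suppose $n/4-3\nu n<|X|\le n/4$ and, for contradiction, $|N_{G'}(X)|<|X|\le n/4$. Then $Y:=B\setminus N_{G'}(X)$ has more than $n/4$ vertices, and $e_{G'}(X,Y)=0$. Pad $X$ with at most $3\nu n$ arbitrary vertices of $A$ to get $X'$ with $|X'|=n/4$, and take $Y'\subset Y$ with $|Y'|=n/4$. The padding vertices send at most $3\nu n\cdot n/4$ edges into $Y'$, so $e_{G'}(X',Y')\le 3\nu n^2/4<\gamma^2n^2/60$. This contradicts the bound from the first paragraph.

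\emph{Large $X$.} If $|X|>n/4$ and $|N(X)|<|X|$, then $Y=B\setminus N(X)$ is nonempty. Every vertex of $Y$ has all its neighbors in $A\setminus X$, which has fewer than $n/4$ vertices. This contradicts the minimum degree bound when $|X|\ge n/4+3\nu n$. When $n/4<|X|<n/4+3\nu n$, instead take $n/4$ vertices of $X$ and $n/4$ vertices of $Y$, padding $Y$ if needed. As in the intermediate case, this yields at most about $\nu n^2$ edges between two $n/4$-sets, contradicting Lemma~\ref{ParosNonExtr}.

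The main obstacle is the intermediate range around $n/4$, where minimum degree alone fails and we must rely on the robust non-extremality of Lemma~\ref{ParosNonExtr}. The key check is that deleting fewer than $\nu n$ matchings, and padding sets by $O(\nu n)$ vertices, together cost only $O(\nu n^2)\ll\gamma^2n^2$ edges. With Hall's condition established, $G'[A,B]$ has a 1-factor.
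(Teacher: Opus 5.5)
Your proof is correct, but in the critical window around $n/4$ it takes a different route from the paper. The paper does not use Lemma~\ref{ParosNonExtr} here. For $n/4-3\nu n<|A'|\le n/4+3\nu n$ it argues with single vertices:
\begin{itemize}
\item either $A'$ contains a vertex of degree above $n/2+\gamma n/2$, which alone has more than $n/4+3\nu n$ neighbors in $B$;
\item or it contains a vertex $v$ with $deg(v)\le n/2+\gamma n/2$. Then Lemma~\ref{VelTul}(i) gives $|S_v\cap A|<|A'|$, so some $w\in A'\setminus S_v$ exists. By Lemma~\ref{VelTul}(ii), $w$ contributes about $\gamma n/5$ neighbors outside $N(v)$, so $|N(A')|>n/4+3\nu n$.
\end{itemize}
Sets above this window are handled by minimum degree alone, giving $N(A')=B$.

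You instead use the same $S_v$ structure only through Lemma~\ref{ParosNonExtr}, whose proof rests on that mechanism, and run a uniform counting argument. A Hall violator near $n/4$, after padding by $O(\nu n)$ vertices, yields two $n/4$-sets spanning at most $O(\nu n^2)$ edges of $G'$. But bipartite non-extremality, minus the at most $\nu n^2/4$ deleted matching edges, forces more than $\gamma^2n^2/60$ such edges.

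What each approach buys:
\begin{itemize}
\item Yours is more modular: it needs only a minimum-degree bound plus robust bipartite non-extremality. It also accounts explicitly for the deleted matchings, which the paper leaves to slack (for instance, it does not subtract them from $w$'s degree).
\item The paper's gives a stronger conclusion: strict expansion $|N(A')|>n/4+3\nu n$ over the whole window, so it needs no split at $n/4$ and no symmetry reduction. It also needs only a linear relation $\nu=O(\gamma)$, whereas yours needs roughly $\nu<\gamma^2/75$. That is harmless under $\nu\ll\gamma$.
\end{itemize}

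Two cosmetic points. The symmetry reduction produces a violator in $B$ of size at most (not below) $n/4$. Since you then treat large $X$ directly anyway, the reduction is redundant.
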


\noindent {\bf Proof:} We will check the K\H onig-Hall conditions in three steps. 

In the first step, using Lemma~\ref{VelTul}, we have that the minimum degree in $G'[A, B]$ is at least 
$n/4-2\nu n-r=n/4-3\nu n.$ Hence, for every $A'\subset A$ with $|A'|\le n/4-3\nu n$ we have that 
$|N(A')|\ge n/4-3\nu n\ge |A'|.$ 

In the second step we assume that $A'\subset A$ with $n/4-3\nu n<|A'|\le n/4+3\nu n,$ and show that $|N(A')|> n/4+3\nu n.$ Observe first that
if there is a vertex $v\in A'$ with $deg(v) > n/2+\gamma n/2,$ then by Lemma~\ref{VelTul}, $$deg(v, B)\ge n/4+\gamma n/4-\nu n>n/4+3\nu n,$$
implying $|N(A')|>n/4+3\nu n.$

Next we assume that there is a vertex $v\in A'$ with $deg(v)\le n/2+\gamma n/2.$ Lemma~\ref{VelTul} implies that 
$|S_v\cap A|\le n/4 -\gamma n/5.$ Since $$|S_v\cap A|\le n/4-\gamma n/5 < n/4 -3\nu n \le |A'|,$$ 
there exists a vertex $w\in A'-S_v.$ Hence, by Lemma~\ref{VelTul} we have 
$$|N(A')|\ge deg(v, B)+deg(w, \overline{N(v)}\cap B)\ge n/4-3\nu n+\gamma n/5>n/4+3\nu n.$$

Finally, in the third step we may assume that $|A'|\ge n/4-3\nu n +1.$ By Lemma~\ref{VelTul}
every vertex of $B$ will have at least one neighbor in $A',$ hence, in this case $N(A')=B,$ finishing
the proof of the lemma.
\qedf

\medskip

As we discussed earlier, the above lemma immediately implies the following.

\begin{cor}\label{rFact}
The induced subgraph $G[A, B]$ has an $r$-regular spanning subgraph $G'[A, B]$ with $r=\nu n.$ 
\end{cor}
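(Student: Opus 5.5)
The plan is to build the $r$-regular spanning subgraph greedily, one perfect matching at a time, using Lemma~\ref{1Fact} at every step. Set $H_0=G[A,B]$. Suppose that for some $j<r=\nu n$ we have already found edge-disjoint perfect matchings $M_1,\ldots,M_j$ of $G[A,B]$. Put $H_j=G[A,B]-(M_1\cup\cdots\cup M_j)$. This is exactly the situation of Lemma~\ref{1Fact}: fewer than $r$ edge-disjoint perfect matchings have been deleted from $G[A,B]$. Hence $H_j$ has a 1-factor, and we take it as $M_{j+1}$. It is automatically edge-disjoint from $M_1,\ldots,M_j$, since it lives in $H_j$.

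We iterate this for $j=0,1,\ldots,r-1$, which produces $r$ pairwise edge-disjoint perfect matchings $M_1,\ldots,M_r$ of $G[A,B]$. Define $G'[A,B]$ as the spanning subgraph of $G[A,B]$ with edge set $M_1\cup\cdots\cup M_r$. Each vertex of $A\cup B$ is covered exactly once by each $M_i$. Because the matchings are edge-disjoint, every vertex then has degree exactly $r$ in $G'[A,B]$. So $G'[A,B]$ is an $r$-regular spanning subgraph of $G[A,B]$, as claimed.

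The only point that needs care is that Lemma~\ref{1Fact} applies uniformly at every step. Its proof uses only two things:
\begin{itemize}
\item the high-probability properties of the random split $A\cup B$ from Lemma~\ref{VelTul}, which we fix once at the start;
\item the fact that deleting fewer than $r$ perfect matchings lowers each degree by less than $r=\nu n$.
\end{itemize}
Both hold regardless of which specific matchings were removed, so no probabilistic event has to be re-verified during the iteration. I do not expect a genuine obstacle here. The corollary is a direct induction on the number of matchings found, and all the substantive work is contained in Lemma~\ref{1Fact}.

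Finally, I would remark that $r=\nu n$ should be rounded to an integer (say $\lfloor\nu n\rfloor$), which affects nothing.
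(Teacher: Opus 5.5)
Your argument is correct and is exactly the paper's: repeatedly extract a perfect matching from what remains of $G[A,B]$ via Lemma~\ref{1Fact}, which applies as long as fewer than $r$ matchings have been removed, and take the union of the $r$ edge-disjoint matchings, which is $r$-regular. Your remarks that the random-split properties are fixed once at the start and that $r=\nu n$ should be rounded to an integer are sensible clarifications that the paper leaves implicit.
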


\medskip

This corollary enables us to use Theorem~\ref{pontfelbontas} with parameters $\ep_0$ and $d_0$ such that $\ep_0^{1/5}\le d_0/10$ and 
$d_0^{1/3}\le \nu/10.$ 

\begin{cor}\label{felbontas2}
There exists a decomposition of $A$ into the disjoint sets $A_0, A_1, \ldots, A_K,$ and similarly, a decomposition of $B$ into the disjoint sets
$B_0, B_1, \ldots, B_K,$ such that the bipartite subgraphs $G[A_i, B_i]$ are $(\ep, d_i)$-super-regular pairs for every $i\ge 1,$ where $\ep\le (64\ep_0)^{1/5}$
and $d_i\ge d_0-3\ep$ for every $i\in [K].$ 
\end{cor}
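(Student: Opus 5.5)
Corollary~\ref{felbontas2} is a direct application of Theorem~\ref{pontfelbontas} to the spanning subgraph $G'[A,B]$ supplied by Corollary~\ref{rFact}. The plan is simply to check the hypotheses of Theorem~\ref{pontfelbontas} for this bipartite graph and then read off the conclusion, noting that super-regularity of a pair in $G'$ transfers to the corresponding pair in $G$.

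First, $G'[A,B]$ is a balanced bipartite graph on $2\cdot(n/2)$ vertices. In the notation of Theorem~\ref{pontfelbontas} the part size is $n/2$, and we set $\ep=\ep_0$ and $d=d_0$.

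Next we check the parameter conditions.
\begin{itemize}
\item Every vertex has degree exactly $r=\nu n$, so we may take $\lambda=0\le d_0^4/2$.
\item We need $r\ge d_0^{1/3}\cdot n/2$. This follows from the choice $d_0^{1/3}\le \nu/10$, since then $d_0^{1/3}n/2\le \nu n/20<\nu n$.
\item The condition $10\ep_0^{1/5}\le d_0$ is exactly the assumption $\ep_0^{1/5}\le d_0/10$.
\item The bounds $\ep_0<1/10$ and $d_0<1/3$ hold because these constants are chosen small.
\item The size condition $n/2>\exp(150\log(1/d_0)/\ep_0^2)$ holds once $n\ge n_0$, where $n_0$ depends only on $\ep_0,d_0$, and hence ultimately on $\nu$ through the hierarchy.
\end{itemize}

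Theorem~\ref{pontfelbontas} now produces partitions $A=A_0\cup\dots\cup A_K$ and $B=B_0\cup\dots\cup B_K$. Each $G'[A_i,B_i]$ with $i\ge1$ is $(\ep,d_i)$-super-regular, where $\ep\le(64\ep_0)^{1/5}$ and $d_i\ge d_0-3\ep_0$.

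The main subtle point is whether the corollary refers to the pairs in $G'$ or in $G$. The statement writes $G[A_i,B_i]$. The safest reading is that the super-regular pairs are those of the regular subgraph $G'$, and that $G[A_i,B_i]$ refers to these subgraphs, which is how they are used later. Passing to the full induced pairs of $G$ would not automatically preserve $\ep$-regularity, because extra edges could destroy it. I would therefore state explicitly that we work with $G'[A_i,B_i]\subseteq G[A_i,B_i]$. Every embedding later in the paper only needs edges of $G'$, so this reading is enough.

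With that reading, the minimum-degree part of super-regularity holds by conclusion (v) of Theorem~\ref{pontfelbontas}. Finally, since $\ep\ge\ep_0$, the bound $d_i\ge d_0-3\ep_0$ implies $d_i\ge d_0-3\ep$, which is the form stated in the corollary.
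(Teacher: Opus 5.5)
Your proposal is correct and matches the paper, which gets the corollary the same way: it applies Theorem~\ref{pontfelbontas} to the $\nu n$-regular subgraph $G'[A,B]$ of Corollary~\ref{rFact}, with $\ep_0^{1/5}\le d_0/10$ and $d_0^{1/3}\le \nu/10$. Your reading of $G[A_i,B_i]$ as the pairs of $G'$ is the intended one. One small gap: the theorem only gives an upper bound on the regularity parameter, so to justify $\ep\ge\ep_0$ you should take $\ep=(64\ep_0)^{1/5}$, which is legitimate because $\ep$-regularity is monotone in $\ep$.
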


\medskip

We will refer to the sets $A_1, \ldots, A_K,$ and  $B_1, \ldots, B_K$ as {\it non-exceptional clusters},
and $A_0, B_0$ are the {\it exceptional clusters}.

We remark that 
the parameters $\nu$ and $\gamma$ are absolute constants, hence, the numbers $\ep, d$ and $K$ are bounded. 

\medskip

\subsection{A lemma for relocating vertices of $G$}\label{reloclem}

During the embedding of $T$ we may need to relocate a small number of vertices, that is, some vertices have to
change their clusters. 
Given a vertex $v\in V$ and a cluster $A_i$ (where $1\le i\le K$) we say that the $(v, u, w)$ triple is a {\it relocating
$v-A_i$ path}, if there exists $1\le s, t\le K$ such that the following conditions hold:
\begin{enumerate}

\item $|\{u, v, w\}|=3$

\item $deg(v, B_s)\ge \gamma^3 |B_s|$

\item $u\in A_s$ and $deg(u, B_t)\ge \gamma^3 |B_t|$

\item $w\in A_t$ and $deg(w, B_i)\ge \gamma^3 |B_i|$

\end{enumerate}

If we have a relocating $v-A_i$ path $(v, u, w),$ then the relocation is done as follows: first, we put $v$
into the cluster $A_s,$ then $u$ is relocated from $A_s$ into the cluster $A_t,$ and finally, we place $w$ into $A_i$ from $A_t.$ Note, that this way the cardinalities of the clusters $A_s$ and $A_t$ do not change, while the size
of $A_i$ increases by one. 
The definition of a relocating $v-B_i$ path is very similar, one only has to exchange the letters ``$A$'' and ``$B$'' in the above conditions. 

We say, that $v$ is the {\it first vertex} in the $(v, u, w)$ triple, $u$ is the {\it second vertex,} and $w$ is the {\it third vertex.} 
The triples $(v, u_1, w_1)$ and $(v, u_2, w_2)$ are
{\it disjoint} relocating paths, if $|\{u_1, w_1, u_2, w_2\}|=4,$ that is, if the second and third vertices are different.

\begin{lemma}\label{reloc} Let $v\in V$ be any vertex, and $A_s, B_t \subset V-V_0$ be two clusters, where $1\le s, t \le K.$
Then there are at least $\gamma^6n/30$ disjoint $v-A_s$ relocating paths and similarly, at least
 $\gamma^6n/30$ disjoint $v-B_t$ relocating paths.
\end{lemma}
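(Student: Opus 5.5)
Fix $v$ and the target cluster $A_s$ (the $B_t$ case is symmetric, swapping the letters $A$ and $B$). The plan is a counting argument that uses three ingredients. The first is the degree bounds of Lemma~\ref{VelTul}: every vertex has at least $n/4-2\nu n$ neighbours in each of $A$ and $B$. The second is the fact that the exceptional set $V_0=A_0\cup B_0$ is tiny, of size less than $8d_0^{1/3}n\le \nu n$. The third is the bipartite non-extremality of Lemma~\ref{ParosNonExtr}.

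\textbf{First hop.} Say a cluster $B_j$ is \emph{heavy for} a vertex $x$ if $deg(x,B_j)\ge \gamma^3|B_j|$. The neighbours of $v$ in $B$ that lie in non-heavy clusters number at most $\gamma^3 n/2$. At most $\nu n$ of its neighbours lie in $B_0$. Hence $v$ has at least $n/4-\gamma^3 n$ neighbours inside heavy clusters. Let $\mathcal{B}_1$ be the union of the heavy clusters for $v$ and $\mathcal{A}_1$ the union of their partners $A_j$; then $|\mathcal{A}_1|\ge(1-\ep^2)|\mathcal{B}_1|\ge n/4-2\gamma^3 n$. Do the same backwards from the target: let $\mathcal{A}_3$ be the set of vertices $w\in A$ with $deg(w,B_s)\ge\gamma^3|B_s|$. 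By super-regularity of $(A_s,B_s)$, Fact~\ref{reg} shows that almost all of $A_s$ lies in $\mathcal{A}_3$. That alone is too small, so I must grow this set: double count edges between $B_s$ and $A$. Each vertex of $B_s$ has at least $n/4-2\nu n$ neighbours in $A$. By averaging, at least roughly $n/4-\gamma^3 n$ vertices $w\in A$ send at least $\gamma^3|B_s|$ edges into $B_s$; this needs some care but is the same Markov-type estimate. Let $\mathcal{B}_3$ be the union of the partners $B_t$ of clusters $A_t$ containing at least a $\gamma^3$-fraction of vertices from $\mathcal{A}_3$; it too has size about $n/4-O(\gamma^3 n)$.

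\textbf{Middle hop via non-extremality.} A valid second vertex is a $u\in\mathcal{A}_1$ with $deg(u,B_t)\ge\gamma^3|B_t|$ for some $B_t\subset\mathcal{B}_3$. Then $w$ is chosen in $A_t\cap\mathcal{A}_3$, which is a set of size at least $\gamma^3|A_t|$. Pad $\mathcal{A}_1$ and $\mathcal{B}_3$ (or shrink them) to sets $X\subset A$, $Y\subset B$ of size exactly $n/4$. This changes them by $O(\gamma^3 n)$ vertices and hence at most $O(\gamma^3 n^2)$ edges, which is much less than $\gamma^2n^2/50$. Lemma~\ref{ParosNonExtr} therefore gives $e(\mathcal{A}_1,\mathcal{B}_3)\ge\gamma^2n^2/60$. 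Removing edges from $u$ to clusters where $u$ has relative degree below $\gamma^3$ loses at most $\gamma^3n^2/4$ edges. So many edges remain, and by averaging at least $\gamma^2 n/30$ vertices $u\in\mathcal{A}_1$ each have a heavy cluster $B_t\subset\mathcal{B}_3$.

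\textbf{Disjointness.} Finally I build the disjoint paths greedily. Each step uses one $u$ and one $w$. While fewer than $\gamma^6n/30$ paths have been chosen, at most $\gamma^6 n/15$ vertices are used. There is still an unused $u$, since $\gamma^2n/30\gg\gamma^6 n$. The cluster $A_t\cap\mathcal{A}_3$ still has an unused vertex $w\ne v,u$, because $\gamma^3|A_t|$ exceeds the number of used vertices inside $A_t$, at least in aggregate. There is one subtlety: a single cluster $A_t$ could be exhausted. To handle it, I would count $w$'s globally. The total number of candidate $(u,w)$ pairs is at least $\gamma^2 n/30\cdot \gamma^3\min_i|A_i|$, and each used vertex kills few pairs. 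Alternatively, the weaker bound $\gamma^6$ in the statement presumably comes from a cruder product $\gamma^3\cdot\gamma^3$ coming from a per-cluster argument. The main obstacle I expect is exactly this bookkeeping: ensuring that the third vertices $w$ can be chosen distinct when clusters are small (size $\Omega(n^{1/3})$), so that the count is linear in $n$. The way around it is to pick the pairs $(u,w)$ from a linear-size family spread over many clusters, which the edge count above guarantees.
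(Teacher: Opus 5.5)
Your first hop, your backward set $\mathcal{B}_3$ and the middle hop through Lemma~\ref{ParosNonExtr} match the paper's sets $A_I,B_I$ and $B_J$ and its Claim~\ref{elszam}. The last step, however, is where the linear bound is actually earned, and there it has a genuine gap that you flag but do not close.

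Your averaging only produces many $u\in\mathcal{A}_1$ with \emph{at least one} heavy cluster $B_t\subset\mathcal{B}_3$. So the admissible third vertices of such a $u$ are only guaranteed to be the roughly $\gamma^3|A_t|$ vertices of $A_t\cap\mathcal{A}_3$. Theorem~\ref{pontfelbontas} only gives $|A_t|\ge d^{101/\ep^2}n/4$, which is far below $\gamma^3 n$. Nothing in the steps you carried out excludes that all good $u$ are heavy only to the same few tiny clusters, and then your greedy stalls long before $\gamma^6n/30$ triples.

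Your proposed global count does not rescue this. A single third vertex may be admissible for all $\Theta(\gamma^2 n)$ candidate second vertices, so one used vertex can destroy $\Theta(n)$ pairs. The total $(\gamma^2n/30)\cdot\gamma^3\min_i|A_i|$ then yields only about $\gamma^5\min_i|A_i|/30$ disjoint triples. Your closing assertion that the edge count guarantees enough spreading is exactly what needed proof. (Separately, your aside that Fact~\ref{reg} puts almost all of $A_s$ into $\mathcal{A}_3$ is unjustified, since the pair density is only known to be about $d\ll\gamma^3$; you do not use it.)

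The missing idea, which is what the paper does, is to demand more of $u$ in the middle hop. From $e(\mathcal{A}_1,\mathcal{B}_3)\ge\gamma^2n^2/60$, the same averaging gives at least $\gamma^2n/50$ vertices $u\in\mathcal{A}_1$ with $deg(u,\mathcal{B}_3)\ge 2\gamma^3|\mathcal{B}_3|$. The clusters of $\mathcal{B}_3$ that are not heavy for $u$ absorb fewer than $\gamma^3|\mathcal{B}_3|$ of these edges. Hence the heavy ones have total size at least $\gamma^3|\mathcal{B}_3|$, their partners have total size at least $\gamma^3 n/15$, and so they contain at least $\gamma^6 n/15$ admissible third vertices for this single $u$. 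This aggregated per-$u$ count (the paper's sets $J(u)$), not a per-cluster bound, is where $\gamma^6=\gamma^3\cdot\gamma^3$ comes from. With it the plain greedy goes through, since fewer than $\gamma^6n/30$ triples occupy fewer than $\gamma^6n/15$ vertices.

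Alternatively, your spreading intuition can be made rigorous. Suppose the greedy stalled with used set $Z$, $|Z|<\gamma^6n/15$. Then every unused candidate $u$ would be heavy only to clusters $B_t$ with $A_t\cap\mathcal{A}_3\subseteq Z\cup\{v,u\}$, and these clusters have total size at most $(|Z|+2K)/\gamma^3=O(\gamma^3 n)$. So every unused vertex of $\mathcal{A}_1$ sends $O(\gamma^3 n)$ edges into $\mathcal{B}_3$. This gives $e(\mathcal{A}_1,\mathcal{B}_3)\le O(\gamma^3n^2)+|Z|n/2$, contradicting $\gamma^2n^2/60$.
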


\noindent {\bf Proof:} Without loss of generality, we will only consider $v-A_1$ relocating paths.
Theorem~\ref{pontfelbontas} implies that $n/2-8d^{1/3}n\le \sum_{i=1}^K|A_i|\le n/2$ and 
similarly, $n/2-8d^{1/3}n\le \sum_{i=1}^K|B_i|\le n/2.$ Using Lemma~\ref{VelTul} we have, that $$deg(v, \bigcup_{i=1}^KA_i), deg(v, \bigcup_{i=1}^KB_i) \ge n/4-2\nu n-8d^{1/3}n.$$ Set $\eta = 2\nu +8d^{1/3},$
so the number of neighbors of any vertex is at least $n/4-\eta n$ in $\cup_{i=1}^KA_i$
and in $\cup_{i=1}^KB_i.$ 
Let $I=\{i: deg(v, B_i)> \gamma^3|B_i|\}.$ Set $B_I=\bigcup_{i\in I}B_i$ and $A_I=\bigcup_{i\in I}A_i.$

\begin{claim}\label{xalso}
We have $|B_I|\ge (\frac{1}{2}-2\gamma^3)\frac{n}{2}.$
\end{claim}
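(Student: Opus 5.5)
This is a double-counting argument on the edges from $v$ into $\bigcup_{i=1}^K B_i$.

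\emph{Lower bound.} By the discussion just before the claim, $deg(v, \bigcup_{i=1}^K B_i)\ge n/4-\eta n$, where $\eta=2\nu+8d^{1/3}$.

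\emph{Upper bound.} Split the neighbours of $v$ among the clusters according to whether the cluster index lies in $I$. A cluster $B_i$ with $i\in I$ can receive at most $|B_i|$ edges from $v$. A cluster $B_i$ with $i\notin I$ receives at most $\gamma^3|B_i|$ edges, by the definition of $I$. Since $\sum_{i=1}^K|B_i|\le n/2$, this gives
$$deg\Big(v,\bigcup_{i=1}^K B_i\Big)\le |B_I|+\gamma^3\Big(\frac n2-|B_I|\Big)\le |B_I|+\gamma^3\frac n2 .$$

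\emph{Comparison.} Combining the two bounds yields
$$|B_I|\ge \frac n4-\eta n-\gamma^3\frac n2 = \Big(\frac12-2\eta-\gamma^3\Big)\frac n2 .$$
It remains to note that $2\eta=4\nu+16d^{1/3}\le\gamma^3$. This holds because of the hierarchy $d\ll\nu\ll\gamma$, which we may take to include $\nu\le\gamma^3/8$ and $d^{1/3}\le\gamma^3/32$. Hence $|B_I|\ge(\frac12-2\gamma^3)\frac n2$, as claimed.

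\emph{Main obstacle.} The only delicate point is the parameter bookkeeping: the loss $\eta$ coming from Lemma~\ref{VelTul} and from the exceptional sets of Theorem~\ref{pontfelbontas} must be absorbed into the slack $\gamma^3 n/2$. This works because $\nu$ and $d$ are polynomially smaller than $\gamma$, so no real difficulty arises.
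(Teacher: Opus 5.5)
Your proof is correct and follows the paper's argument: the same double count of $v$'s neighbours in $\bigcup_{i}B_i$ (at most $|B_i|$ from clusters with $i\in I$, at most $\gamma^3|B_i|$ from the others, with $\sum_i|B_i|\le n/2$), compared against the lower bound $n/4-\eta n$. The only difference is cosmetic: you drop the $-\gamma^3|B_I|$ term instead of dividing by $1-\gamma^3$, so you need $2\eta\le\gamma^3$, which the hierarchy $d\ll\nu\ll\gamma$ provides (the paper's final step also needs somewhat more than its stated $\eta<\gamma^3$, roughly $\eta\le 3\gamma^3/4$).
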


\noindent {\bf Proof:} (of the claim) Let $\alpha=|B_I|/(n/2).$ For estimating $\alpha$ from below, we assume the worst case, that is, 
$v$ has full degree into every $B_i$ for $i\in I,$ and has precisely $\gamma^3|B_t|$ neighbors in $B_t,$ whenever $t\not\in I.$ 

Then we have the following inequality:
$$\alpha\frac{n}{2}+\gamma^3(1-\alpha)\frac{n}{2}\ge \frac{n}{4}-\eta n.$$ Observe, that here we have $(1-\alpha)n/2$ for 
$\sum_{i\not\in I}|B_i|$ -- it is easy to see, that using the upper bound $n/2$ for the total number of vertices
in $\bigcup_{i=1}^KB_i$ results in a smaller, therefore valid, lower bound for $\alpha.$ After rearranging we have
$$\alpha(1-\gamma^3)\ge \frac{1}{2}-2\eta -\gamma^3,$$ implying that $$\alpha\ge \frac{\frac{1}{2}-2\eta -\gamma^3}{1-\gamma^3}\ge \frac{1}{2}-2\gamma^3,$$
since $\eta<\gamma^3.$ \qedf

\medskip

\begin{claim}\label{xalso2}
We have $|A_I|\ge (\frac{1}{2}-3\gamma^3)\frac{n}{2}.$
\end{claim}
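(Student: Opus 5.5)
The claim follows from Claim~\ref{xalso} and property (iv) of Theorem~\ref{pontfelbontas}, which says that corresponding non-exceptional clusters have almost equal sizes. The index set $I$ was defined through the clusters $B_i$, but $A_I$ uses the same indices. So we only need to pass from $|B_I|$ to $|A_I|$ cluster by cluster.

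First, for every $i\in[K]$, property (iv) gives $||A_i|-|B_i||\le 2\ep^2|A_i|$. Hence $|B_i|\le (1+2\ep^2)|A_i|$, that is,
$$|A_i|\ge \frac{|B_i|}{1+2\ep^2}\ge (1-2\ep^2)|B_i|.$$
(Here $\ep$ denotes the regularity parameter of Corollary~\ref{felbontas2}, which satisfies $\ep\ll\gamma$.)

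Second, we sum this over $i\in I$ and apply Claim~\ref{xalso}:
$$|A_I|=\sum_{i\in I}|A_i|\ge (1-2\ep^2)|B_I|\ge (1-2\ep^2)\left(\frac12-2\gamma^3\right)\frac n2 \ge \left(\frac12-2\gamma^3-\ep^2\right)\frac n2 .$$

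Finally, the hierarchy $\ep\ll d\ll\nu\ll\gamma$ gives $\ep^2\le\gamma^3$. Therefore the right-hand side is at least $(\frac12-3\gamma^3)\frac n2$, as claimed.

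There is no real obstacle in this argument. The only point to check is the direction of inequality (iv): it is stated relative to $|A_i|$, so it gives $|B_i|\le(1+2\ep^2)|A_i|$, which is exactly the direction needed to bound $|A_i|$ from below.
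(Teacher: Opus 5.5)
Your proposal is correct and follows essentially the same route as the paper: property (iv) of Theorem~\ref{pontfelbontas} gives $|A_I|\ge |B_I|/(1+\text{small})$ cluster by cluster, and Claim~\ref{xalso} then yields the bound. The only difference is that the paper uses the cruder ratio bound $|B_j|/|A_j|\le 1+\ep$, while you keep the sharper factor $1+2\ep^2$; this makes no essential difference.
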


\noindent {\bf Proof:} (of the claim) By Theorem~\ref{pontfelbontas}, $||A_j|-|B_j||\le 2\ep^2 |A_j|$ for every 
$1\le j\le K.$ Hence, $$1-\ep \le \frac{|B_j|}{|A_j|}\le 1+\ep.$$ From this we obtain 
$$\frac{1}{1+\ep}|B_I|\le |A_I|.$$ Since 
$(1+\ep)(\frac{1}{2}-3\gamma^3)< \frac{1}{2}-2\gamma^3,$ the claim follows. \qedf

\medskip

We need a new set of indices: let 
$$J=\{j: \exists S\subset A_j \ {\rm with \ } |S|= \gamma^3|A_j| \ {\rm such \ that \ } deg(w, B_1)> {\gamma^3}|B_1| \ {\rm for \ every \ } w\in S \}.$$ Set $A_J=\bigcup_{j\in J}A_j$ and $B_J=\bigcup_{j\in J}B_j.$

\begin{claim}\label{yalso}
We have $|A_J|\ge (\frac{1}{2}-3\gamma^3)\frac{n}{2}$ and $|B_J|\ge (\frac{1}{2}-4\gamma^3)\frac{n}{2}.$
\end{claim}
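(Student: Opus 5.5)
Mirroring Claims~\ref{xalso} and~\ref{xalso2}, I would bound $|A_J|$ by double counting and then transfer the bound to $|B_J|$.

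\emph{Step 1: lower bound on $|A_J|$.} Let $W=\{w\in \bigcup_{i=1}^K A_i: deg(w, B_1)>\gamma^3|B_1|\}$. For every $j\notin J$, fewer than $\gamma^3|A_j|$ vertices of $A_j$ lie in $W$. Hence
$$|W|\le |A_J|+\gamma^3\sum_{j\notin J}|A_j|\le |A_J|+\gamma^3\frac{n}{2}.$$
So it suffices to show $|W|\ge (\frac12-2\gamma^3)\frac n2$ or so, and I would get this by counting edges between $B_1$ and $\bigcup_i A_i$.

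Every $u\in B_1$ has at least $n/4-\eta n$ neighbours in $\bigcup_i A_i$, using Lemma~\ref{VelTul} and the bound on $|A_0|$ from Theorem~\ref{pontfelbontas}; here $\eta=2\nu+8d^{1/3}$ as before. Thus
$$e\Bigl(B_1, \bigcup_i A_i\Bigr)\ge |B_1|\Bigl(\frac n4-\eta n\Bigr).$$
On the other side, vertices in $W$ contribute at most $|B_1|$ each and vertices outside $W$ contribute at most $\gamma^3|B_1|$ each. Writing $\beta=|W|/(n/2)$ and using $\sum_i|A_i|\le n/2$ gives, exactly as in Claim~\ref{xalso},
$$\beta+\gamma^3(1-\beta)\ge \frac12-2\eta .$$
This yields $\beta\ge \frac12-2\gamma^3$, since $\eta\ll\gamma^3$ in our hierarchy $d\ll\nu\ll\gamma$. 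Combining with the first display gives $|A_J|\ge |W|-\gamma^3 n/2\ge (\frac12-3\gamma^3)\frac n2$.

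\emph{Step 2: transfer to $|B_J|$.} As in Claim~\ref{xalso2}, property (iv) of Theorem~\ref{pontfelbontas} gives $|B_j|\ge (1-\ep)|A_j|$ for every $j$. Hence $|B_J|\ge (1-\ep)|A_J|$, and $(1-\ep)(\frac12-3\gamma^3)\ge \frac12-4\gamma^3$ because $\ep\ll\gamma$.

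\emph{Main obstacle.} The only delicate point is the double-counting bound in Step 1. The clean per-cluster threshold in the definition of $J$ has to be converted into a global count, and the lost term $\gamma^3 n/2$ must fit within the slack between $2\gamma^3$ and $3\gamma^3$. The direct computation shows it does, provided $\eta$ is small compared to $\gamma^3$, which the parameter choices ensure. There is also an integrality issue: $\gamma^3|A_j|$ need not be an integer. I would handle it by reading the definition of $J$ with $\lceil\cdot\rceil$; this costs only a negligible error since the clusters are large.
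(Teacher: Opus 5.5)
Your proof is correct and follows essentially the paper's argument: the paper also double counts edges between $B_1$ and $\bigcup_i A_i$ using the degree bound $n/4-\eta n$ and a worst-case bound on the clusters outside $J$, and then transfers the result to $|B_J|$ via property (iv) exactly as in Claim~\ref{xalso2}. The only difference is cosmetic. You first bound the set $W$ of high-degree vertices and then discard at most $\gamma^3 n/2$ of them, whereas the paper folds both steps into one inequality; that gives a slightly stronger intermediate bound, which is not needed for the claim.
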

\noindent {\bf Proof:} (of the claim) Let $\alpha=|A_J|/(n/2).$ We use assumptions similar to the ones in Claim~\ref{xalso}: if $j\in J,$ then every vertex $A_J$ has full degree into $B_1,$ and for
$t\not\in J,$ $A_t$ has precisely $\gamma^3|A_t|$ vertices that have full degree into $B_1,$ the rest, 
$(1-\gamma^3)|A_t|,$ have
precisely $\gamma^3|B_1|$ neighbors in $B_1.$ Then we obtain the following inequality:
$$\alpha\frac{n}{2}|B_1|+(1-\alpha)(\gamma^3+(1-\gamma^3){\gamma^3})\frac{n}{2}|B_1|\ge |B_1|(\frac{n}{4}-\eta n).$$ Dividing by
$|B_1|\cdot \frac{n}{2}$ and then rearranging we get
$$\alpha(1-2\gamma^3+\gamma^6)\ge \frac{1}{2}-2\eta -2\gamma^3+\gamma^6.$$ Since $\gamma\le 1/100,$ this implies
the claimed bound for $\alpha.$ The lower bound for $|B_J|$ can be obtained similarly to the proof of Claim~\ref{xalso2}, we
leave the details for the reader.
\qedf

\medskip

The previous claims help us to use bipartite non-extremality as follows.

\begin{claim}\label{elszam}
We have $e(A_I, B_J)\ge \gamma^2n^2/100.$
\end{claim}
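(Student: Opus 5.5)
The plan is to reduce Claim~\ref{elszam} directly to Lemma~\ref{ParosNonExtr}, the bipartite non-extremality statement: for every $X\subset A$, $Y\subset B$ with $|X|=|Y|=n/4$ we have $e(X,Y)\ge \gamma^2n^2/50$. By Claims~\ref{xalso2} and~\ref{yalso}, both $A_I$ and $B_J$ have size at least $(\frac12-4\gamma^3)\frac n2 = n/4-2\gamma^3 n$. So each is at most $2\gamma^3 n$ vertices short of $n/4$.

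\textbf{Step 1 (sets of size $n/4$).} If $|A_I|\ge n/4$, choose $X\subseteq A_I$ with $|X|=n/4$. Otherwise set $X=A_I\cup P$, where $P\subset A-A_I$ has $|P|=n/4-|A_I|\le 2\gamma^3 n$. Define $Y$ from $B_J$ in the same way, padding with a set $Q\subset B-B_J$ when needed, so $|Y|=n/4$ and $|Q|\le 2\gamma^3 n$.

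\textbf{Step 2 (apply Lemma~\ref{ParosNonExtr}).} Lemma~\ref{ParosNonExtr} gives $e(X,Y)\ge \gamma^2n^2/50$, and this holds with high probability. We work on that event, as the paper already does when it invokes Lemma~\ref{VelTul}.

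\textbf{Step 3 (remove the padding).} Since $X\cap A_I$ is a subset of $A_I$ and $Y\cap B_J$ is a subset of $B_J$, the edges between $A_I$ and $B_J$ include all edges between these pieces. Every edge of $G[X,Y]$ not counted this way touches $P$ or $Q$. Hence
\[
e(A_I,B_J)\ \ge\ e(X,Y)-|P|\cdot|Y|-|Q|\cdot|X|\ \ge\ \frac{\gamma^2n^2}{50}-2\cdot 2\gamma^3 n\cdot\frac n4=\frac{\gamma^2n^2}{50}-\gamma^3n^2 .
\]
Because $\gamma\ll 1$ (for example $\gamma\le 1/100$ gives $\gamma^3\le\gamma^2/100$), the right-hand side is at least $\gamma^2n^2/100$.

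There is a minor technical point: if $n/4$ is not an integer, we round and absorb the $O(1)$ error.

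The main obstacle is ensuring the loss from removing the padding is of lower order. This works only because the deficiencies in Claims~\ref{xalso2} and~\ref{yalso} are of order $\gamma^3 n$, whereas the bound from Lemma~\ref{ParosNonExtr} is of order $\gamma^2 n^2$. Both inputs are already established, so this is just the inequality check in Step 3.
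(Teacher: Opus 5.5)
Your proposal is correct and is essentially the paper's proof. You pad $A_I$ and $B_J$ up to $n/4$ vertices, with at most $2\gamma^3 n$ extra vertices each by Claims~\ref{xalso2} and~\ref{yalso}, apply Lemma~\ref{ParosNonExtr}, and discard the at most $\gamma^3 n^2$ edges touching the padding using $\gamma\le 1/100$. Your truncation to a subset of size exactly $n/4$ when a set is already large is a slightly cleaner treatment of a point the paper glosses over, since that lemma is stated only for sets of size exactly $n/4$.
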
 

\noindent {\bf Proof:} (of the claim) We begin with adding $n/4-|A_I|\le 2\gamma^3 n$ vertices to $A_I,$ if necessary, and similarly, $n/4-|B_J|\le 2\gamma^3 n$ vertices to $B_J$ in order to achieve that these subsets have cardinality at least $n/4.$ These are the {\it extra} vertices.
The upper bounds for the number of extra vertices follow from Claim~\ref{xalso2} and Claim~\ref{yalso}, respectively.

Using Lemma~\ref{ParosNonExtr} we have at least $\gamma^2n^2/50$ edges between the two subsets.
The number of edges which have at least one extra vertex endpoint is at most $2\cdot 2\gamma^3 n \cdot n/4=\gamma^3 n^2.$
Since $\gamma \le 1/100,$ we proved what was desired.
\qedf

\medskip

\begin{claim}
There are at least $\gamma^2n/50$ such vertices in $A_I$ which all have at least $2\gamma^3|B_J|$ neighbors in $B_J.$ 
\end{claim}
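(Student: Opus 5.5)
This is a direct averaging argument built on Claim~\ref{elszam}. Let $L\subset A_I$ be the set of vertices with at least $2\gamma^3|B_J|$ neighbors in $B_J$. Suppose, for contradiction, that $|L|<\gamma^2n/50$. I would split the edge count $e(A_I,B_J)$ according to whether the $A_I$-endpoint lies in $L$ or in $A_I-L$, and bound each part from above.

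For vertices in $L$ I use only the trivial bound $\deg(w,B_J)\le |B_J|\le n/2$. This gives a contribution of less than $\frac{\gamma^2 n}{50}\cdot\frac{n}{2}=\gamma^2n^2/100$. For vertices outside $L$ the degree into $B_J$ is less than $2\gamma^3|B_J|\le\gamma^3 n$, and there are at most $|A_I|\le n/2$ of them. Their contribution is therefore less than $\gamma^3n^2/2$. Adding the two parts, $e(A_I,B_J)<\gamma^2n^2/100+\gamma^3n^2/2$.

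This upper bound does not by itself contradict the lower bound $\gamma^2n^2/100$ from Claim~\ref{elszam}, because the second term is positive. That slack is the only real obstacle. To remove it, I would redo Claim~\ref{elszam} with its stronger intermediate bound. Its proof actually yields
$$e(A_I,B_J)\ge \gamma^2n^2/50-\gamma^3n^2.$$
Since $\gamma\le 1/100$, we have $\gamma^3n^2+\gamma^3n^2/2=\tfrac32\gamma^3n^2\le\gamma^2n^2/100$, which gives
$$\gamma^2n^2/50-\gamma^3n^2>\gamma^2n^2/100+\gamma^3n^2/2.$$
This contradicts the upper bound, so $|L|\ge\gamma^2n/50$, as required.

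If one prefers to keep $\gamma^2n^2/100$ as the black-box lower bound, the contradiction still goes through with a slightly smaller threshold for $|L|$, for example $\gamma^2n/60$. Any linear-in-$\gamma^2 n$ count suffices for the later construction of the $\gamma^6n/30$ disjoint relocating paths. Since the statement asks for exactly $\gamma^2n/50$, however, I would take the sharper route through the $\gamma^2n^2/50$ bound of Lemma~\ref{ParosNonExtr}.
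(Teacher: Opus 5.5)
Your argument is the same averaging against Claim~\ref{elszam} that the paper uses, with different bookkeeping. The paper trims $A_I,B_J$ to size at most $n/4$ and works with the fraction $\alpha$, using the black-box bound $\gamma^2n^2/100\ge\frac{4}{25}\gamma^2|A_I||B_J|$. You instead keep the full sets, bound their sizes by $n/2$, and compensate with the intermediate bound $\gamma^2n^2/50-\gamma^3n^2$, which is indeed what the proof of Claim~\ref{elszam} gives. Not trimming has a small merit: your threshold $2\gamma^3|B_J|$ refers to the full $B_J$, exactly as the next claim needs. The paper's trimming blurs this threshold by up to a factor $2$.

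However, one step is false as justified. The inequality $\tfrac32\gamma^3\le\gamma^2/100$ is equivalent to $\gamma\le 1/150$, so it does not follow from $\gamma\le 1/100$: at $\gamma=1/100$ the left side is $1.5\cdot 10^{-6}$ and the right side is $10^{-6}$. Likewise, your $\gamma^2n/60$ fallback needs $\gamma\le 1/300$. Under the standing hierarchy $\gamma\ll 1$ this is harmless, but in the paper's numerical regime the crude bounds $|A_I|,|B_J|\le n/2$ are what cost you. An easy repair: take $X\subseteq A$ and $Y\subseteq B$ with $|X|=|Y|=n/4$ and $|X\setminus A_I|,|Y\setminus B_J|\le 2\gamma^3n$, which is possible by Claims~\ref{xalso2} and~\ref{yalso}. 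Lemma~\ref{ParosNonExtr} then gives $e(X\cap A_I,Y\cap B_J)\ge\gamma^2n^2/50-\gamma^3n^2$. Now suppose $|L|<\gamma^2n/50$. Since $|X\cap A_I|,|Y\cap B_J|\le n/4$, and each vertex outside $L$ has fewer than $2\gamma^3|B_J|\le\gamma^3n$ neighbors in $Y\cap B_J$, the same edge count is less than $\gamma^2n^2/200+\gamma^3n^2/4$. This is a contradiction for all $\gamma\le 3/250$, which covers $\gamma\le 1/100$.
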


\noindent {\bf Proof:}
We assume that $|A_I|, |B_J|\le n/4.$ If not, one can arbitrarily leave out vertices from them. Let $\alpha|A_I|$ denote the number of 
vertices in $A_I$ which all have at least $2\gamma^3|B_J|$ neighbors in $B_J.$ We have the following inequality:
$$\alpha|A_I|\cdot |B_J|+(1-\alpha)2\gamma^3|A_I|\cdot |B_J|\ge \gamma^2 \frac{n^2}{100}\ge \gamma^2 \frac{4}{25}|A_I|\cdot |B_J|.$$
Dividing by $|A_I|\cdot |B_J|,$ after rearranging we have $$\alpha(1-2\gamma^3) \ge \frac{4}{25}\gamma^2-2\gamma^3.$$
Hence, $\alpha\ge \gamma^2/10.$ Using that $|A_I|\ge (1/2-3\gamma^3)n/2,$ the claimed bound follows.
\qedf

\medskip

Given the set $J$ and a vertex $u$ we define a new set of indices $J(u)\subseteq J$ as follows:
$$J(u)=\{ j : j\in J \ {\rm and \ } deg(u, B_j)>  \gamma^3|B_j|\}.$$ Set $B_{J(u)}=\bigcup_{j\in J(u)}B_j$ and $A_{J(u)}=\bigcup_{j\in J(u)}A_j.$ 

\smallskip

\begin{claim}
Let $u\in A_I$ be a vertex which has at least $2\gamma^3|B_J|$ neighbors in $B_J.$ Then $|B_{J(u)}|\ge \gamma^3|B_J|/2$ 
and $|A_{J(u)}|\ge \gamma^3|A_J|/3.$
\end{claim}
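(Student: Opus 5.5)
We will run the same worst-case averaging as in Claim~\ref{xalso}, now inside $B_J$ and with $u$ in place of $v$. Then we pass from $B_{J(u)}$ to $A_{J(u)}$ using the near-balance of the pairs, as in Claim~\ref{xalso2}.

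\emph{Lower bound for $|B_{J(u)}|$.} Let $\beta=|B_{J(u)}|/|B_J|$. We count $deg(u,B_J)$ in the worst case. For $j\in J(u)$, assume $u$ is adjacent to every vertex of $B_j$. For $j\in J\setminus J(u)$, the definition of $J(u)$ gives $deg(u,B_j)\le \gamma^3|B_j|$. Summing over $j\in J$ yields
$$2\gamma^3|B_J|\le deg(u,B_J)\le \beta|B_J|+\gamma^3(1-\beta)|B_J|.$$
After dividing by $|B_J|$ we get $\beta(1-\gamma^3)\ge\gamma^3$, so $\beta\ge\gamma^3/(1-\gamma^3)\ge\gamma^3/2$. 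This is the first claimed bound. The only thing to check is that $B_J$ is a disjoint union of the clusters $B_j$, $j\in J$, so the sum splits cleanly; this holds by construction.

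\emph{Lower bound for $|A_{J(u)}|$.} Theorem~\ref{pontfelbontas}(iv) gives $(1-\ep)|A_j|\le|B_j|\le(1+\ep)|A_j|$ for every $j$. Summing over $J(u)$ gives $|A_{J(u)}|\ge|B_{J(u)}|/(1+\ep)$, and summing over $J$ gives $|B_J|\ge(1-\ep)|A_J|$. Combining these,
$$|A_{J(u)}|\ge \frac{1-\ep}{1+\ep}\cdot\frac{\gamma^3}{2}|A_J|\ge \frac{\gamma^3}{3}|A_J|,$$
since $\ep\ll\gamma$ makes $(1-\ep)/(1+\ep)>2/3$.

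There is no real obstacle here. The only delicate point is the bookkeeping of which side the regular-pair balance is applied on. We also keep the stronger factor $1/(1-\gamma^3)$ from the first part in reserve, in case the constants need slack.
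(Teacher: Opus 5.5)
Your proof is correct and takes the same approach as the paper. It uses the same worst-case averaging over the clusters of $B_J$ to get $|B_{J(u)}|/|B_J|\ge \gamma^3/(1-\gamma^3)$, then moves to the $A$-side using the near-balance $(1-\ep)|A_j|\le |B_j|\le (1+\ep)|A_j|$ from Theorem~\ref{pontfelbontas}(iv), as in Claim~\ref{xalso2}; you spell out that second step (the factor $(1-\ep)/(1+\ep)>2/3$) more explicitly than the paper does.
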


\noindent {\bf Proof:}
Let $\alpha=|B_{J(u)}|/|B_J|.$ The following inequality is satisfied:
$$\alpha|B_J|+(1-\alpha)\gamma^3|B_J|\ge 2\gamma^3|B_J|.$$ Dividing by $|B_J|$ and rearranging gives
$$\alpha\ge  \frac{\gamma^3}{1-\gamma^3}>\frac{\gamma^3}{2},$$ as desired.
The second part of the statement follows easily, as in Claim~\ref{xalso2}, from the fact that $|A_j|$ and $|B_j|$ may differ only in a small proportion
for every $j.$
\qedf

\medskip

After these preparations we are ready to prove the lemma. Consider the triples of the type $(v, u, w),$ where 
$u\in A_i\subseteq  A_I$ and $w\in A_j\subseteq A_{J(u)},$ so $i\in I$ and $j\in J(u)\subseteq J.$ 
By the definition of the index sets $I, J$ and $J(u),$ $A_I$ has at least $\gamma^2n/50$ vertices $u$ with 
$deg(u, B_j)\ge \gamma^3|B_j|$ which all can be second vertices. For a given second vertex $u,$ if $j\in J(u),$ the cluster 
$A_j$ has at least $\gamma^3|A_j|$ 
such vertices $w$ for which $deg(w, B_1)\ge \gamma^3|B_1|.$ Since 
$$|A_{J(u)}|\ge \gamma^3|A_J|/3\ge \frac{\gamma^3}{3}\left(\frac{1}{2}-3\gamma^3\right)\frac{n}{2}>\gamma^3\frac{n}{15},$$
we have that for any given $v,$ one can choose at least $\gamma^2n/50$ second vertices, and for any such 
second vertex $u$ one can choose at least $\gamma^3|A_{J(u)}|> \gamma^6 {n/15}$ third vertices. This means
that there are at least $\gamma^5n^2/750$ relocating triples for $v.$ Of course, the disjointness requirement is not satisfied. 

We can find the disjoint triples from the set of all triples using a greedy algorithm. In the beginning every vertex
in the triples will be {\it available}. The key observation is that whenever we 
choose an available second vertex $u$ and an available third vertex $w,$ both $u$ and $w$ will be deleted from the set 
of available vertices. This procedure guarantees the disjointness of the triples. 

Using that the number of possible second vertices is at least $\gamma^2n/50 > 2 \gamma^6n/30,$ and for each
we have at least $\gamma^6n/15=2\gamma^6n/30$ possibilities for a third vertex, we cannot get stuck before finding at least
$\gamma^6n/30$ disjoint triples. This finishes the proof of the lemma.
\qedf

\subsection{Balancing procedure}\label{balanceproc}

Our goal in this section is to distribute all vertices in $V_0$ among the non-exceptional clusters, and if necessary, a small 
proportion of vertices in $V-V_0,$ such that when the procedure have finished, all pairs in the (new) decomposition are balanced. 
The main tool for this is the relocation algorithm given by Lemma~\ref{reloc}, which will be applied $O(d^{1/3}n)$ times 
during the balancing. We remark, that towards the end of the embedding, Lemma~\ref{reloc} will also play an important role,
in a slightly different kind of balancing procedure.

We require that no cluster is ``overused'', more precisely, the proportion of vertices which participate in any relocating triple during
the balancing must not be larger than $\gamma^4$ in any cluster.

Recall, that in an applicaton of Lemma~\ref{reloc} three vertices change their locations, and the second and the third one of
the relocating triple belong 
to some non-exceptional cluster. At any point in time during the balancing, for every $i\in [K],$ we denote by $A'_i\subseteq A_i$ and $B'_i\subseteq B_i$ the sets of those vertices, which {\it belonged} to 
$A_i,$ respectively, $B_i,$ in the beginning, and $A''_i, B''_i$ denote the sets of those vertices, which were {\it added} to the clusters
of the $i$th pair. The clusters in the $i$th pair are denoted by $\widehat{A}_i=A'_i\cup A''_i$ and $\widehat{B}_i=B'_i\cup B''_i$
during the balancing.

These sets may change dynamically. 
In the beginning, $\widehat{A}_i=A_i=A'_i$ and $\widehat{B}_i=B_i=B'_i$ and 
therefore $A''_i=B''_i=\emptyset$ for every $i\in [K].$ At any point in time we have $A'_i=\widehat{A}_i\cap A_i,$
$A''_i=\widehat{A}_i-A_i,$ $B'_i=\widehat{B}_i\cap B_i,$ and $B''_i=\widehat{B}_i-B_i.$ 
When the balancing algorithm finishes, we have $$V_0=A_0\cup B_0\subseteq \bigcup_{i\in [K]} (A''_i\cup B''_i).$$

Recall, that Lemma~\ref{reloc} provides at least $\gamma^6n/30$ disjoint relocating triples for every $v\in V $ and every
non-exceptional cluster. The number of vertices to be relocated is in the order $O(d^{1/3}n),$ which is 
much larger than $dn.$ Hence,
we have to be careful, otherwise we may lose the minimum degree in the originally 
super-regular pairs. For this reason we will only pick relocating triples that contain second and third vertices
which are ``spread out'', we pick them from random subsets of the clusters. 

Note that the quasirandomness is destroyed, 
since relatively many vertices are incorporated into the regular pairs, even though $G[A'_i, B'_i]$ remains $2\ep$-regular. Still, it is possible to obtain the quasirandomness back for 
large subpairs, but for achieving it we have to work hard later during the embedding, in Section~\ref{cover}.

More precisely, we do the following:
for every vertex in the non-exceptional clusters we flip a coin, independently from other choices, 
randomly dividing the non-exceptional clusters into two subsets, depending on the outcome of the coin flip:
for every $i\in [K]$ we let $A_i=A_i^{(1)}\cup A_i^{(2)}$ and $B_i=B_i^{(1)}\cup B_i^{(2)},$ where 
$A_i^{(1)}\cap A_i^{(2)}=B_i^{(1)}\cap B_i^{(2)}=\emptyset.$ 

Set $S=\bigcup_i (A_i^{(1)}\cup B_i^{(1)}).$ By Chernoff's inequality the random subsets of a 
cluster are about the same size, and we also have other nice properties, in particular, we can maintain
large minimum degrees during the balancing into the sets $A_i^{(2)}$ and $B_i^{(2)}.$ This will be discussed in more detail later.

Whenever we need to choose a relocating triple, we will pick one among those triples that have their second
and third vertices in $S.$ Since the probability that both a second and a third
vertex of a triple belongs to some sub-cluster in $S,$ is 1/4, we expect about at least
$\gamma^6n/120$ triples for every $v\in V$ and cluster, which remain for relocation. Here we used the disjointness of
the triples. By Chernoff's inequality, for every $v\in V$ and sub-cluster in $S$ there will remain at least $\gamma^6n/150$ relocating triples
with high probability. 

Assume that we are after applying Theorem~\ref{pontfelbontas} for $G[A, B].$ 
Given a pair $G[\widehat{A}_i, \widehat{B}_i]$ we say that it has a {\it surplus,} if $|\widehat{A}_i|>|\widehat{B}_i|,$
and has a {\it deficiency,} if $|\widehat{A}_i|<|\widehat{B}_i|,$ otherwise we call the pair {\it balanced.} Note, that this definition
works throughout the balancing algorithm. 
During the algorithm we call a triple $(v, u, w)$ {\it available} in the $t$th step, if the previous $t-1$
triples have not used $v, u$ and $w.$ 

\medskip

After these preparations the description of the {\it Balancing algorithm} is as follows.

\begin{enumerate}

\item If $A_0$ is non-empty and there is a pair $G[\widehat{A}_i, \widehat{B}_i]$ 
with deficiency, we pick an arbitrary vertex $v\in A_0,$ and then randomly, uniformly a relocating 
$v-\widehat{A}_i$ triple 
among the available ones with 
first vertex $v$ and second and third vertices belonging to $S,$ and apply Lemma~\ref{reloc} with vertex $v$ 
and cluster $\widehat{A}_i.$
Delete $v$ from $A_0.$ Maintain the sets (cluster and sub-clusters) used in the relocation accordingly.

\item If $A_0$ is non-empty and there are no pairs with deficiency, then we pick a pair $G[\widehat{A}_i, \widehat{B}_i]$ with minimum surplus (this could be a balanced pair), an arbitrary $v\in A_0,$ and a random available
$v-\widehat{A}_i$ relocation triple, and apply Lemma~\ref{reloc} with vertex $v$ 
and cluster $\widehat{A}_i.$ Delete $v$ from $A_0.$ Maintain the sets (clusters and sub-clusters) used in the relocation accordingly.

\item If $A_0=\emptyset$ and $B_0\neq \emptyset,$ then there exists at least one pair with a surplus. 
Let $G[\widehat{A}_i, \widehat{B}_i]$ be such a pair. Pick an arbitrary $v\in B_0,$ choose randomly an available
$v-\widehat{B}_i$ triple, and apply Lemma~\ref{reloc} with vertex $v$ 
and cluster $\widehat{B}_i.$ Delete $v$ from $B_0.$ Maintain the sets (clusters and sub-clusters) used in the relocation accordingly.

\item If $A_0=B_0=\emptyset$ and there is a pair $G[\widehat{A}_i, \widehat{B}_i]$ with a surplus and a pair
$G[\widehat{A}_j, \widehat{B}_j]$ with a deficiency, then pick an arbitrary vertex $v\in A_i^{(1)}$ as a first vertex,
and randomly choose a relocating triple among the available ones with second and third vertices 
belonging to $S.$ Apply Lemma~\ref{reloc} with vertex $v$ 
and cluster $A_j.$ Maintain the sets (clusters and sub-clusters) used in the relocation accordingly.

\item If $A_0=\emptyset,$ and there is no pair with surplus, then all pairs are balanced and $B_0=\emptyset,$
since $|A|=|B|.$ We stop.

\item Repeat the above steps until all pairs become balanced, and $A_0, B_0$ become empty.
 
\end{enumerate}
 
\begin{claim}\label{ballepes}
The number of relocations is at most $|V_0|+2\ep^2n\le 9 d^{1/3}n.$
\end{claim}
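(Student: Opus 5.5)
The plan is to use a potential function that each step of the Balancing algorithm decreases, and to bound its initial value. The natural choice is
$$\Phi=|A_0|+|B_0|+\sum_{i\in[K]}\bigl||\widehat{A}_i|-|\widehat{B}_i|\bigr|,$$
or a slight variant. Each step performs exactly one relocation, that is, one application of Lemma~\ref{reloc}. So it suffices to show that $\Phi$ is initially at most $|V_0|+2\ep^2 n$, and that each step decreases $\Phi$ by at least one. The main bookkeeping fact is that a relocating triple changes the size of only the target cluster: the intermediate clusters $\widehat{A}_s$ and $\widehat{A}_t$ keep their cardinalities, and the first vertex leaves either $V_0$ or one specific cluster.

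\textbf{Initial value.} At the start, $\Phi=|V_0|+\sum_i ||A_i|-|B_i||$. By Theorem~\ref{pontfelbontas}(iv), $||A_i|-|B_i||\le 2\ep^2|A_i|$. Summing over $i$ gives at most $2\ep^2\cdot n/2\le 2\ep^2 n$. Then $|V_0|<8d^{1/3}n$ by Theorem~\ref{pontfelbontas}(ii), and since $\ep\ll d$ the total is at most $9d^{1/3}n$.

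\textbf{Monotonicity, step by step.}
\begin{itemize}
\item Step 1: $|A_0|$ drops by one and a deficient pair gains an $A$-vertex, so its imbalance drops by one. Hence $\Phi$ decreases by $2$.
\item Step 2: $|A_0|$ drops by one, while some balanced or surplus pair's imbalance may grow by one. Here the unweighted $\Phi$ can stay constant, which is the delicate point.
\item Step 3: a vertex of $B_0$ goes to a surplus pair, so $\Phi$ decreases by $2$.
\item Step 4: a vertex leaves $A_i^{(1)}$, a surplus cluster, and the target deficient cluster $A_j$ gains one. Both imbalances drop, so $\Phi$ decreases by $2$.
\end{itemize}

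\textbf{Handling Step 2.} I expect Step 2 to be the main obstacle, and I would handle it by charging rather than monotonicity. In Steps 1, 2 and 3 each relocation removes one vertex from $V_0$, so these steps number at most $|V_0|$ in total.

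The quantity $\Delta=\sum_i\bigl||\widehat{A}_i|-|\widehat{B}_i|\bigr|$ can only grow in Step 2, by at most $1$ each time. Those increases are later cancelled one-for-one by Step 3, whose count is already included in the $|V_0|$ bound. Step 2 always chooses a minimum-surplus pair, so surpluses are created only when no deficiencies exist. Consequently the total surplus created in Step 2 equals the number of $B_0$ vertices that must later absorb it. This is consistent with $|A|=|B|$: once $A_0$ is empty, the total surplus equals $|B_0|$ minus the total deficiency.

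Step 4 is entered only once $A_0=B_0=\emptyset$. At that moment $\sum_i(|\widehat{A}_i|-|\widehat{B}_i|)=0$, and Step 4 decreases $\Delta$ by $2$ each time. From there on, a single $A$-vertex move fixes both a surplus and a deficiency, so Step 4 runs at most $\Delta/2$ times.

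The remaining point is to show that $\Delta$ at that moment is at most the initial imbalance $2\ep^2 n$. Steps 1 and 3 never increase $\Delta$. For Step 2, its increases to $\Delta$ are paid back by Step 3 moves, as argued above. I would make this precise using the weighted potential $\Psi=2|A_0|+|B_0|+\Delta$, which strictly decreases in every step. Combining the bounds gives a total of at most $|V_0|+\Delta_{\mathrm{init}}/2\le|V_0|+2\ep^2n$ relocations, and this is at most $9d^{1/3}n$.
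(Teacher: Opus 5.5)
Your overall count is the paper's: Steps 1--3 are charged to the vertices of $V_0$, and Step~4 runs $\Delta/2$ times, where $\Delta=\sum_i\bigl||\widehat A_i|-|\widehat B_i|\bigr|$ at the moment $A_0=B_0=\emptyset$. The paper simply asserts that at this moment each pair still differs by at most $2\ep^2|A_i|$.

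The gap is in the one point you single out, $\Delta\le\Delta_{\mathrm{init}}$ at that moment. Your potential $\Psi=2|A_0|+|B_0|+\Delta$ does drop in every step, by $3,1,2,2$ in Steps 1--4. But when $A_0=B_0=\emptyset$ we have $\Psi=\Delta$, and exactly $|V_0|$ steps precede this moment. A decrease of at least one per step therefore only yields $\Delta\le\Psi_{\mathrm{init}}-|V_0|=\Delta_{\mathrm{init}}+|A_0|$. Since $|A_0|$ can be of order $d^{1/3}n\gg\ep^2 n$, this bounds the Step~4 moves only by roughly $|A_0|/2$. Neither $|V_0|+2\ep^2n$ nor $9d^{1/3}n$ follows.

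Your verbal charging is also inaccurate in three places:
\begin{itemize}
\item Because $|A|=|B|$, once $A_0=\emptyset$ the total surplus equals $|B_0|$ \emph{plus}, not minus, the total deficiency.
\item The surplus created in Step~2 is at most $|B_0|$, not equal to it.
\item What matters is the entry condition of Step~2 (no deficient pair exists), not the minimum-surplus choice.
\end{itemize}

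To close the gap, let $a_1,a_2$ be the numbers of Step~1 and Step~2 moves. Exact bookkeeping gives $\Delta=\Delta_{\mathrm{init}}-a_1+a_2-|B_0|$ when Step~4 starts, so you need $a_2\le a_1+|B_0|$.
\begin{itemize}
\item If Step~2 is never used, this is trivial: Steps 1 and 3 only shrink existing imbalances.
\item If Step~2 is used, all deficiencies are gone by then, and neither Step~2 nor Step~3 creates a new one. When $A_0$ empties, $\sum_i(|\widehat A_i|-|\widehat B_i|)=|B_0|$ is exactly the total surplus. Each of the $|B_0|$ Step~3 moves removes one unit of it, so afterwards every pair is balanced and Step~4 never runs.
\end{itemize}
Either way Step~4 is executed at most $\Delta_{\mathrm{init}}/2\le\ep^2n/2$ times. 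Together with the $|V_0|$ moves of Steps 1--3, this gives the claim.
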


\noindent {\bf Proof:} Recall, that $|V_0|=|A_0|+|B_0|< 8 d^{1/3}n$ by Theorem~\ref{pontfelbontas}. The total number of relocations done in steps \#1 and \#2 is $|A_0|.$
 One can perform step \#3 precisely $|B_0|$ times.
It is clear, that if $A_0=B_0=\emptyset,$ then the cluster sizes in the $i$th pair may differ by at most $2\ep^2|A_i|,$
this value follows from $(iv)$ of Theorem~\ref{pontfelbontas}. 
Hence, in step \#4 the total number of relocations is bounded above by $\frac{1}{2}\sum_i ||A_i|-|B_i||\le 2\ep^2n.$
It is easy to see that if the condition of step \#5 is satisfied, then all pairs $G[\widehat{A}_j, \widehat{B}_j]$ must
be balanced, hence the procedure stops.
 Since $\ep \ll d,$ we proved what was desired.  
\qedf

\medskip

\begin{claim}\label{kisvesszo}
For every $i\in [K]$ we have $|A_i''|, |B_i''|\le \gamma^4 |\widehat{A}_i|$ with high probability.
\end{claim}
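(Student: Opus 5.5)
We need to bound the number of vertices added to each cluster during balancing. A vertex enters $\widehat{A}_i$ from outside $A_i$ in only two ways. Either $\widehat{A}_i$ is the \emph{target} cluster of a relocation (the third vertex $w$ is placed into it), or it is the \emph{intermediate} cluster of a triple (the first vertex $v$ enters $A_s$, or the second vertex $u$ enters $A_t$). In the intermediate case the entering vertex is compensated by a vertex leaving, but it still counts towards $A''_i$. So we bound, separately for each $i$, (a) the number of times $\widehat{A}_i$ or $\widehat{B}_i$ is a target cluster, and (b) the number of triples whose second or third vertex lies in $A_i$ or $B_i$. The second count bounds the intermediate-cluster events, because a triple using $A_s$ as intermediate removes its second vertex from $A_s$, and similarly for $A_t$ and its third vertex. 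Thus $|A''_i|$ is at most the target count plus the number of vertices of $A_i$ used as second or third vertices.

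\textbf{Step 1: targets.} The algorithm always chooses as target a pair of deficiency, or of minimum surplus, or of surplus. I would argue that this greedy choice keeps the cluster sizes of all pairs "level". In steps \#1--\#2 it fills deficiencies first and then spreads the remaining vertices of $A_0$ over pairs of minimum surplus, so the additions go round robin. The number of additions to a given pair is then roughly proportional to $m_i$, plus the initial imbalance $2\ep^2|A_i|$. Here I would use that after Theorem~\ref{pontfelbontas} all cluster sizes are comparable, all at least $d^{101/\ep^2}n/4$, and that $K$ is bounded.

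If this levelling argument is too delicate when cluster sizes differ, I would fall back on a weighted version: choose the target with minimum (surplus)/$|A_i|$. The total number of target events is $\le 9d^{1/3}n$ by Claim~\ref{ballepes}, and $\sum_i|A_i|\ge n/2-8d^{1/3}n$. Hence each pair receives at most about $(9d^{1/3}n/(n/4))|A_i|+2\ep^2|A_i|+1\le 40 d^{1/3}|A_i|$, which is far below $\gamma^4|A_i|/2$ since $d\ll\gamma$.

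\textbf{Step 2: second and third vertices.} This is where the randomness enters. Each triple is chosen uniformly among at least $\gamma^6n/150 - 3\cdot 9d^{1/3}n\ge \gamma^6 n/200$ available disjoint triples, since earlier choices destroy at most three triples each. Disjointness means the second vertices of these triples are distinct, so the probability that a given step uses a second vertex from $A_i$ is at most $|A_i|/(\gamma^6n/200)$, and the same bound holds for third vertices. Over at most $9d^{1/3}n$ steps the expected number of vertices of $A_i$ used is at most $2\cdot 9d^{1/3}n\cdot 200|A_i|/(\gamma^6 n)=3600\, d^{1/3}\gamma^{-6}|A_i|$. This is at most $\gamma^4|A_i|/4$ since $d\ll\gamma$.

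For concentration the steps are not independent. I would therefore use the Azuma--Hoeffding inequality (Theorem~\ref{azuma}) on the Doob martingale of the counts, with increments at most $2$. Alternatively, a standard domination by independent Bernoulli variables with success probability $p=400|A_i|/(\gamma^6n)$ reduces this to Theorem~\ref{Chernoff}. Since $|A_i|=\Omega(n^{1/3}(\log n)^{-1/2})$, the expected value is polynomially large, the deviation probability is $\exp(-n^{c})$, and a union bound over the $2K$ clusters gives the claim with high probability.

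\textbf{Main obstacle.} The main obstacle is Step 1, ensuring no single pair is chosen as target disproportionately often. The levelling or weighted choice must be made explicit, and one must check that steps \#3 and \#4 do not repeatedly hit the same pair. I would show this by noting that each application of step \#3 or \#4 strictly decreases the total imbalance $\sum_i \bigl||\widehat{A}_i|-|\widehat{B}_i|\bigr|$ affecting that pair. Also, $|\widehat{A}_i|\ge |A_i|(1-\gamma^4)$ throughout, so relative bounds in $|A_i|$ transfer to $|\widehat{A}_i|$ up to a negligible factor.
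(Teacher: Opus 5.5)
Your Step~2 is the paper's proof. The paper also bounds the number of second and third vertices drawn from $A_i^{(1)}$: each triple is chosen uniformly among more than $\gamma^6n/200$ available disjoint triples, so each step hits $A_i^{(1)}$ with probability $O(|A_i^{(1)}|/(\gamma^6 n))$, the expectation is $O(d^{1/3}\gamma^{-6}|A_i^{(1)}|)$, and Azuma finishes.

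The paper has no counterpart of your Step~1. It never counts the third vertices deposited into $\widehat{A}_i$ when pair $i$ is the target. You are right that this term is needed. Indeed $|A''_i|\ge m_i-|A_i|$, so the claim forces $m_i\le(1+O(\gamma^4))|A_i|$, and that depends entirely on how targets are chosen.

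Your primary argument for Step~1 does not work. Step \#2 picks a pair of minimum \emph{absolute} surplus, which levels surpluses in absolute terms. Every pair therefore receives roughly the same number of insertions, about $|A_0|/K$, and step \#3 then adds about as many to $\widehat{B}_i$. This is independent of $|A_i|$, not proportional to $m_i$.

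Nor are cluster sizes comparable. Theorem~\ref{pontfelbontas} only gives $|A_i|\ge d^{101/\ep^2}n/4$, and the paper stresses there is no upper bound. Nothing prevents one huge pair alongside a pair of size close to $d^{101/\ep^2}n/4$. That small pair then receives $\Theta(d^{1/3}n)\gg\gamma^4 m_i$ insertions, so the claim can fail for the algorithm as literally written.

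So your weighted rule is not a fallback but a necessary change to step \#2. With it the bound is clean. When pair $i$ receives its last step-\#2 insertion, $s_i/|A_i|\le s_j/|A_j|$ for all $j$, so $s_i\le |A_i|\,(2\ep^2+|A_0|/\sum_j|A_j|)+1=O(d^{1/3}|A_i|)$. Step \#1 adds at most $2\ep^2|A_i|$. Steps \#3 and \#4 add at most the remaining surplus, respectively deficiency, of pair $i$. Nothing later in the paper uses the specific choice rule, so the change is harmless.

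Two technical points in Step~2.

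First, the Doob martingale $\mathbb{E}[X\mid X_1,\ldots,X_j]$ need not have increments bounded by $2$, since an early choice changes which triples are available later. Use instead the compensated sum $\sum_{k\le j}(X_k-\mathbb{E}[X_k\mid \text{past}])$. Its increments are at most $2$, and the compensator is deterministically at most $\ell\cdot 400|A_i|/(\gamma^6 n)$. The paper's bound $\sigma_j\le 200|A^{(1)}_i|/(\gamma^6 n)$ is likewise unjustified, but $\sigma_j\le 2$ suffices. Azuma then gives failure probability $2\exp(-\Omega(\gamma^8|A_i|^2/(d^{1/3}n)))$. This is small only because $|A_i|$ is linear in $n$ for fixed $d,\ep$; the bound $\Omega(n^{1/3}(\log n)^{-1/2})$ you cite would not suffice.

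Second, for the domination route, Theorem~\ref{Chernoff} requires $\lambda\le 3/2$, while your deviation is a large multiple of the mean. Use instead $P(X\ge t)\le 2^{-t}$ for $t\ge 2e\,\mathbb{E}X$.
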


\noindent {\bf Proof:} Fix an $i\in [K].$ It is clear that $A_i$ and $B_i$ may not contain more than $2\ep^2|A_i|$
first vertices from relocating triples, these are the ones used in step \#4. 

The second and third vertices of relocating triples were randomly chosen from the available ones.
Let $\ell$ denote the number of relocations, and for $j=1, \ldots, \ell,$ define the random variable $X_j$ to be 1, if $A^{(1)}_i$ contains a vertex
from the triple of the $j$th relocation. Since the number of relocating triples is at least $\gamma^6n/150$ 
for every first vertex in the beginning, and one relocation decreases the number of available 
triples for all available first vertices by 2 (the second and the third vertex may destroy 2 triples of an available 
first vertex),  the number of available triples for all first vertices is larger than 
$\gamma^6n/200.$ Here we used that $d^{1/3}\ll \gamma^6.$ 

In Theorem~\ref{pontfelbontas} there is no upper bound for the size of the non-exceptional clusters. This forces us to consider two cases. The first case is when $|A^{(1)}_i|\ge \gamma^6n/200.$ Then even if all
relocating triples use a vertex from $A^{(1)}_i,$ the claim clearly holds. From now on we assume the
second case, when $|A^{(1)}_i|< \gamma^6n/200.$

Let $X=\sum_jX_j,$ then $X$ is the number of second and third vertices of
relocating triples we used during the balancing from $A^{(1)}_i.$

Clearly, for every $j\in [\ell]$ we have $$P(X_j=1)\le \frac{200|A^{(1)}_i|}{\gamma^6n},$$ thus 
$$\mathbb{E}X= O\left(\frac{d^{1/3}|A^{(1)}_i|}{\gamma^6}\right)<\frac{\gamma^4}{2}|A^{(1)}_i|.$$

We define a set of random variables: $Y_j=\mathbb{E}[X| X_1, \ldots, X_{j}]$ for $1\le j\le \ell,$ hence, $Y_{\ell}=X.$ Also, let $Y_0=\mathbb{E}[X].$ 
Observe, that $\mathbb{E}[Y_{j+1}|Y_1, \ldots, Y_j]=Y_j,$ hence, $Y_0, \ldots, Y_{\ell}$ is a martingale process. It is easy to see that $Y_{\ell}=X$ and $\sigma_j=|Y_j-Y_{j-1}|\le 200|A^{(1)}_i|/(\gamma^6n).$ Azuma's inequality asserts that 
$P(Y_{\ell}\ge \mathbb{E}[X]+\lambda) \le 2\exp(-\lambda^2/(2\sigma^2)),$ where $\sigma^2=\sum_{j=1}^{\ell}\sigma_j^2.$ 
Substituting $\lambda=\gamma^4|A^{(1)}_i|/2$ we have that 
$$ P(X\ge \gamma^4|A^{(1)}_i|)\le P(X \ge \mathbb{E}[X]+\gamma^4|A^{(1)}_i|/2)\le 2e^{-\mu n},$$
where $\mu= \gamma^{20}d^{-1/3}/(72\cdot 200^2),$ a constant. It is easy to see that if $n$ is sufficiently large, then the probability in question is less than
$1/n.$ Since the same reasoning works for $B''_i$ as well, we proved what was desired. \qedf

\subsection{Finishing the proof of Proposition~\ref{Gdecomp}}

We are ready to finish the proof. By Claim~\ref{ballepes}, in less than $9d^{1/3}n$ steps we have
$m_i=|\widehat{A}_i|=|\widehat{B}_i|$ for every $i.$  Claim~\ref{kisvesszo} implies that $|A''_i|, |B''_i| \le \gamma^4 m_i$  for every $i.$ Since $A'_i=A^{(2)}_i\cup (A^{(1)}_i-A''_i),$ and $|A''_i|\le \gamma^4 m_i,$ using Chernoff's inequality, $|A'_i|\ge 2|A_i|/3$ with high probability. Similar inequality holds for the cardinality of $B'_i.$ 

Chernoff's inequality also
implies that for every 
$v \in V,$ if $deg(v, A'_i)\ge (d-\ep)|A'_i|,$ then we have $$deg(v, A'_i)\ge deg(v, A^{(2)}) \ge deg(v, A_i)/3$$ and 
similarly, $$deg(v, B'_i)\ge deg(v, B^{(2)}) \ge deg(v, B_i)/3,$$ if  $deg(v, B'_i)\ge (d-\ep)|B'_i|,$ Hence, by definition
of super-regularity, $G[A'_i, B'_i]$ is a $(2\ep, d/3)$-super-regular pair for every $i$ with high probability. By the definition of relocating triples, 
if $v\in A''_i,$ then $deg(v, B'_i)\ge \gamma^3|B'_i|/3,$ and $deg(w, A'_i)\ge \gamma^3|A'_i|/3$ 
for every $w\in B''_i.$  
This finishes the proof of the proposition.
\qedf

\section{Preprocessing of $T$}

Recall, that $T$ is a tree on $n$ vertices with maximum degree $D,$ where $D$ does not depend on $n.$
An important ingredient of the decomposition of $T$ is the following folklore result.

\begin{lemma} \label{split}
Let $\Gamma$ be any tree on $t$ vertices. Then $\Gamma$ has a \emph{split vertex} $x \in V(\Gamma)$ such that it is possible to group the
vertices of $\Gamma-x$ into two forests, $\Gamma_1$ and $\Gamma_2$ such that $t/3 \le v(\Gamma_1), v(\Gamma_2) \le 2t/3$ and there is
no edge connecting $\Gamma_1$ and $\Gamma_2$ in $\Gamma-x.$
\end{lemma}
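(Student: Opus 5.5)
The plan is the standard centroid argument, followed by a greedy grouping of the components that hang off the centroid.

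First, choose $x$ to be a \emph{centroid} of $\Gamma$: a vertex such that every component of $\Gamma-x$ has at most $t/2$ vertices. To find it, start at any vertex. While some component of $\Gamma-x$ has more than $t/2$ vertices, move $x$ to its neighbour inside that component. Each move strictly shrinks the largest component hanging off the current vertex, because the part left behind has fewer than $t/2$ vertices. So the walk terminates, and it ends at a centroid.

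Let $C_1,\dots,C_k$ be the components of $\Gamma-x$, with sizes $c_1\ge c_2\ge\dots\ge c_k$. Then $\sum_j c_j=t-1$ and $c_1\le t/2$. Every $\Gamma_1,\Gamma_2$ will be a union of whole components, so no edge of $\Gamma-x$ joins them automatically. Only the size constraints remain, and I split into two cases.

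\emph{Case 1: $c_1\ge t/3$.} Put $\Gamma_1=C_1$ and let $\Gamma_2$ be the union of the remaining components. Then $t/3\le v(\Gamma_1)\le t/2\le 2t/3$. Also $v(\Gamma_2)=t-1-c_1\ge t/2-1$, which is at least $t/3$ once $t\ge 6$, and in any case is at most $2t/3$.

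\emph{Case 2: all $c_j<t/3$.} Add components to $\Gamma_1$ one at a time, and consider the partial sums $s_0=0<s_1<\dots<s_k=t-1$. Consecutive partial sums differ by less than $t/3$. Hence some partial sum $s_m$ lies within less than $t/6$ of $(t-1)/2$. Taking $\Gamma_1=C_1\cup\dots\cup C_m$ and $\Gamma_2$ to be the rest, both $s_m$ and $t-1-s_m$ lie in the interval $\left((t-1)/2-t/6,\ (t-1)/2+t/6\right)$. That is essentially $[t/3,2t/3]$.

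The only real obstacle is the additive rounding loss of order $1/2$ at the lower end of this interval, together with tiny values of $t$. I would handle it as follows. For $t$ below a small absolute constant the statement is checked directly, or read with the obvious $\pm1$ tolerance; the lemma is only applied to trees whose size grows with $n$, so such slack is harmless. Alternatively, the bound can be sharpened using integrality: choose $m$ as the first index with $s_m\ge \lceil t/3\rceil$, and argue that $s_m\le \lceil t/3\rceil+\lceil t/3\rceil-2\le 2t/3-1$ when every $c_j\le\lceil t/3\rceil-1$. This leaves $t-1-s_m\ge t/3$.
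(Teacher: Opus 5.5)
\textbf{There is a genuine gap, but it lies in the statement rather than in your method.} The paper states this lemma as folklore and gives no proof. Your centroid-plus-grouping argument is the standard one. Case 1 and the first form of Case 2 are correct: they give both parts more than $t/3-\tfrac12$ and at most $2t/3$ vertices.

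The step that fails is the integrality sharpening. For $t=3k+1$ you have $\lceil t/3\rceil=k+1$, so your bound gives only $s_m\le 2k$, whereas you need $s_m\le 2t/3-1=2k-\tfrac13$.

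No better argument can close this, and it is not a small-$t$ effect: the lemma as literally stated is false for every $t\equiv 1\pmod 3$. Take a center $c$ with three pendant paths of $k$ vertices each, so $t=3k+1$.
\begin{itemize}
\item Deleting $c$ leaves three components of size $k$. Every grouping of them has a part with $0$ or $k<t/3$ vertices.
\item Deleting any other vertex leaves the component containing $c$, which has at least $2k+1>2t/3$ vertices and must lie entirely in $\Gamma_1$ or $\Gamma_2$.
\end{itemize}
For $k=1$ this is the failure at $t=4$. There no tree works at all, since each part would need at least $2$ of the $3$ vertices of $\Gamma-x$. So ``checked directly'' is not available there either.

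What your argument does prove, essentially verbatim, is the lemma with lower bound $(t-1)/3$ instead of $t/3$, for $t\ge 3$:
\begin{itemize}
\item If $c_1\ge (t-1)/3$, take $\Gamma_1=C_1$. Then $(t-1)/3\le \lceil t/2\rceil-1\le v(\Gamma_2)\le 2(t-1)/3$, and $v(\Gamma_1)\le t/2\le 2t/3$.
\item Otherwise every $c_j<(t-1)/3$. The first $m$ with $s_m\ge (t-1)/3$ then satisfies $s_m<2(t-1)/3$, so both parts lie in $[(t-1)/3,\,2(t-1)/3]$.
\end{itemize}

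The spider shows that $(t-1)/3$ is sharp. For $t\not\equiv 1\pmod 3$ one has $\lceil (t-1)/3\rceil=\lceil t/3\rceil$, which is exactly why your sharpening succeeds in those residue classes.

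This weakening is harmless for the paper, which only uses the lemma to cut $T$ into boundedly many pieces of size at most $\eta n$; the constants change negligibly. But you should state and prove this corrected version rather than claim the exact bound $t/3$.
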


\smallskip

Let $0<\eta \ll 1$ be a real number. Let us apply Lemma~\ref{split} repeatedly, until each subtree we obtain has at most
$\eta n$ vertices. This way one can arrive at a decomposition of the tree $T$ into the split vertices $s_1, \ldots, s_k,$
and the sub-forests $F_1, \ldots, F_t,$ such that (a) $|F_i|\le \eta n$ for every $i,$ 
(b) $k\le D/\eta,$ and (c) $T$ has no edge between $F_i$ and $F_j,$
whenever $i\neq j,$ while we may have edges between split vertices. Perhaps property (b) is not immediate,
it follows from the fact that even the smallest forest we obtain must have at least $\eta n/D$ vertices.

\smallskip

Let $T'$ denote the {\it smallest subtree} of $T$ that contains all split vertices. This subtree has at most $k$ leaves,
since only a split vertex can be a leaf in $T'.$ Next we {\it mark} the split vertices and also those vertices in
$V(T')$ which have at least 3 neighbors in $T',$ the other vertices of $T'$ are called {\it unmarked}. 

\begin{fact}\label{marked}
The number of marked vertices in $T'$ is at most $2k.$
\end{fact}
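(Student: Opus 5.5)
We count the marked vertices by splitting them into the split vertices and the branching vertices of $T'$, i.e.\ the vertices of degree at least $3$ in $T'$. There are exactly $k$ split vertices, so it suffices to show that $T'$ has fewer than $k$ vertices of degree at least $3$; even counting a vertex that is both a split vertex and a branching vertex twice, the total is then at most $2k$.

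First, every leaf of $T'$ is a split vertex. Indeed, if a leaf $\ell$ of $T'$ were not a split vertex, then deleting $\ell$ from $T'$ would leave a smaller subtree still containing all split vertices, contradicting the minimality of $T'$. Hence the number $L$ of leaves satisfies $L\le k$. (If $T'$ is a single vertex or an edge there are no branching vertices and the bound is trivial, so assume $v(T')\ge 3$.)

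Second, we use the standard handshake count for a tree. Let $n_1$ be the number of leaves, $n_2$ the number of degree-$2$ vertices and $n_{\ge 3}$ the number of vertices of degree at least $3$ in $T'$. Summing degrees gives
$$2(n_1+n_2+n_{\ge3})-2=\sum_{x\in V(T')}\deg_{T'}(x)\ge n_1+2n_2+3n_{\ge 3}.$$
This rearranges to $n_{\ge3}\le n_1-2\le k-2$.

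Therefore the number of marked vertices is at most $k+(k-2)\le 2k$. No step here is a real obstacle; the only point requiring care is the minimality argument showing that all leaves of $T'$ are split vertices.
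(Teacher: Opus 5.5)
Your proof is correct and follows the paper's argument: the marked vertices are the $k$ split vertices plus the vertices of degree at least $3$ in $T'$, and the latter are bounded by the number of leaves of $T'$, all of which are split vertices. You also fill in the two details the paper only cites, namely the minimality argument showing that every leaf of $T'$ is a split vertex, and the handshake count giving the bound $n_{\ge 3}\le n_1-2$.
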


\noindent {\bf Proof:} Recall that every leaf of $T'$ is a split vertex. The desired inequality follows from the fact that 
in a tree the number of vertices with degree at least 3 is a lower bound for the number of leaves.
\qedf

\medskip

A path $x_1e_1x_2\ldots x_te_tx_{l+1}$ of length $l$ in $T'$ is called a {\it line} if except possibly the endpoints $x_1$ and $x_{l+1},$ every
vertex of the path has degree two in $T',$ and all the inner vertices $x_2, \ldots, x_l$ are unmarked. 

Let us consider the maximal lines in $T'.$ The endpoints of the maximal lines are marked vertices. If the length of a maximal line is at least 10,
we call it {\it long}, otherwise we call it {\it short}. The inner vertices of the long lines are deleted from $T',$ while we keep every vertex of the short ones. 
Denote the resulting sub-forest of $T'$ by $T_S,$ this is the {\it skeleton} of $T.$

\begin{fact}\label{skeleton}
The skeleton $T_S$ has at most $20k$ vertices.
\end{fact}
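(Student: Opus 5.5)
The approach is to count the vertices of $T_S$ through the structure of $T'$ as a union of maximal lines glued at marked vertices. By Fact~\ref{marked}, $T'$ has at most $2k$ marked vertices. Every vertex of $T'$ is either marked or an inner vertex of exactly one maximal line. The reason is that an unmarked vertex has degree at most two in $T'$. It cannot have degree one, because then it would be a leaf of $T'$, and leaves of $T'$ are split vertices, hence marked. So it has degree exactly two and lies in the interior of a unique maximal path all of whose inner vertices are unmarked. Here we take the endpoints of each maximal line to be the marked vertices where the path is forced to stop, as stated just before the definition of long and short lines.

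The plan then has three steps. First, bound the number of maximal lines. Contract each maximal line to a single edge between its two marked endpoints. The result is a tree whose vertex set is the set of marked vertices, since $T'$ is a tree and contraction along these internally disjoint paths preserves acyclicity and connectivity. It therefore has at most $2k-1$ edges, so there are at most $2k-1$ maximal lines. Second, bound what each line contributes to $T_S$. A long line contributes no inner vertices, since they are deleted. A short line has length at most $9$, so it contributes at most $8$ inner vertices. Third, add up: $|V(T_S)|\le 2k+8(2k-1)\le 18k\le 20k$.

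There is one mild subtlety, which I expect to be the only real obstacle. We must check that the maximal lines partition the unmarked vertices and that their endpoints are indeed marked. An endpoint of a maximal line cannot be unmarked: an unmarked vertex has degree $2$ in $T'$, so the line could be extended through it, contradicting maximality. A leaf endpoint is a split vertex and so is marked. Also, two maximal lines cannot share an inner vertex, because an inner vertex of degree two determines its line uniquely. With these verifications the counting above goes through, and the slack between $18k$ and $20k$ covers any boundary case, such as $T'$ being a single vertex or a single line.
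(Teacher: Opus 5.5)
Your proof is correct and follows the paper's argument. The paper likewise uses Fact~\ref{marked} to bound the marked vertices by $2k$, bounds the number of maximal lines by $2k$, and charges a bounded number of vertices to each short line. Your contraction argument (giving at most $2k-1$ lines) and your count of at most $8$ inner vertices per short line make explicit, and slightly sharpen, steps the paper states without justification.
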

\noindent {\bf Proof:} 
By Fact~\ref{marked} the number of maximal lines in $T'$ is at most $2k.$ Hence, the short maximal lines contain a total of at most $18k$ vertices. Taking the marked vertices into account we obtain the desired 
upper bound for $|V(T_S)|.$   
\qedf

\medskip

Clearly, every component of $T-T_S$ has at most $\eta n$ vertices, since $T_S$ contains every split vertex. 
Since the maximum degree of $T$ is $D,$ the number of components in 
$T-T_S$ is at most $20kD\le 20 D^2/\eta.$

\begin{lemma}\label{FokSkeletonba}
If $F$ is a component in $T-T_S,$ then it
may have either one or two neighbors in $T_S.$ 
\end{lemma}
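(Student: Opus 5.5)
We argue from the structure of $T'$, the minimal subtree containing all split vertices. Let $F$ be a component of $T-T_S$. Since $T$ is connected and $T_S$ is nonempty (it contains the split vertices), $F$ has at least one neighbor in $T_S$. The task is the upper bound of two.

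We split into two cases, according to whether $F$ meets $T'$.

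\emph{Case 1: $F$ is disjoint from $T'$.} Since $T$ is a tree and $T'$ is a subtree, every component of $T-V(T')$ is attached to $T'$ by exactly one edge. Otherwise two edges from a single component to $T'$, together with a path inside the component and a path inside $T'$, would form a cycle. Hence $F$ lies inside one component $C$ of $T-V(T')$, and $C$ has a unique attachment vertex $y\in V(T')$. If $y\in V(T_S)$, then $C$ is exactly such a component $F$, with one neighbor in $T_S$. If $y\notin V(T_S)$, then $y$ is an inner vertex of a long line and $C$ is merged into a larger component; that situation belongs to Case 2.

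\emph{Case 2: $F$ meets $T'$.} The vertices of $T'$ not in $T_S$ are exactly the inner vertices of long maximal lines. The inner vertices of one long line $P=x_1\ldots x_{l+1}$ induce a path in $T'$, and hence in $T$. Distinct lines are separated by marked vertices, which lie in $T_S$. So $F\cap V(T')$ is contained in the inner vertex set of a single long line $P$, and in fact equals all of it, since those vertices are connected and avoid $T_S$. Every other vertex of $F$ lies in a component of $T-V(T')$ hanging off some inner vertex of $P$ (by Case 1's uniqueness of attachment). Such a hanging component has no edge to $T_S$ except through its attachment vertex, which is in $F$. Therefore the only edges from $F$ to $T_S$ go from inner vertices of $P$ to vertices of $T'$. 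The inner vertices have degree two in $T'$, so their $T'$-neighbors are other inner vertices or the endpoints $x_1,x_{l+1}$. Thus $N(F)\cap V(T_S)\subseteq\{x_1,x_{l+1}\}$, giving at most two neighbors.

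The main obstacle is the bookkeeping in Case 2: showing that edges of $T$ from an inner vertex of a line can reach $T'$ only along the line itself. This follows because the line is an induced path of the tree $T'$ and all other edges at its inner vertices leave $T'$. One should also note that $x_1\neq x_{l+1}$ in a tree, so "two" is attained exactly for the components containing the interior of a long line.
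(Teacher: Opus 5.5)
Your proof is correct and follows essentially the same route as the paper. Both arguments rest on three facts: each component of $T-V(T')$ attaches to $T'$ by a single edge; acyclicity prevents a component of $T-T_S$ from meeting the interiors of two different long lines (to make this step explicit, note that the unique path in $T$ between two vertices of $T'$ lies in $T'$); and a component containing a long line's interior has exactly that line's two endpoints as its $T_S$-neighbors.
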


\noindent{\bf Proof:} Observe first, that every component of $T-T'$ is connected to $T'$ by
precisely one edge. We must have at least one edge, since $T$ is connected, and we cannot have 2 or more,
since $T'$ is connected and $T$ is cycle-free. The endpoint of such an edge which belongs to $V(T')$ is either a marked vertex,
or an inner vertex of a maximal line.

Say, that $F$ is a component in $T-T',$ such that its only neighbor in $V(T')$ is a marked vertex $x.$ Then 
$F$ remains a component in $T-T_S,$ and its only neighbor remains $x\in V(T_S).$

If $F_1, \ldots, F_t$ are components in $T-T'$ such that all of them have their neighbors in the same long maximal line $\ell,$
then they will belong to the same component in $T-T_S,$ which also includes all the inner vertices of $\ell.$  
On the other hand, if $F_1, F_2$ are components in $T-T'$ such that their neighbors belong to 
different maximal lines, then no component of $T-T_S$ will contain both, since that would mean
a cycle in $T.$ 

Hence, whenever two or more components in $T-T'$ are united in $T-T_S,$ then these
must have their $T'$-neighbors in the same long maximal line. But then this new, larger component has
precisely two neighbors in $T_S$: the two marked vertices, which are the two endpoints of a long maximal line. 
This finishes the proof of the lemma.
\qedf

\medskip

We summarize the properties of the tree decomposition below.

\begin{prop}\label{Tdecomp}
Let $T$ be a tree on $n$ vertices with maximum degree $D,$ and let $0<\eta<1$ be a real number. Then there exists a 
subforest $T_S\subset T$ with the following properties: 
\begin{enumerate}
\item $T_S$ has at most $20D/\eta$ vertices;
\item every component of $T-T_S$ has at most $\eta n$ vertices;
\item the number of components in $T-T_S$ is at most $20D^2/\eta;$
\item if $F$ is a component in $T-T_S,$ then $1\le |N(F)\cap V(T_S)|\le 2,$ moreover, $e(F, T_S)\le 2;$
\item if a component $F$ in $T-T_S$ has two neighbors $y, y' \in V(T_S),$ then $y$ and $y'$ are the endpoints of a long maximal line in $T',$ they belong to different components of $T_S,$ and their neighbors in $V(F)$ are $x, x'$ with $x\neq x'.$ 
\end{enumerate}
\end{prop}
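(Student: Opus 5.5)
The plan is to take $T_S$ to be the skeleton constructed above and to read off each property from the facts already established, with a little extra work for items 4 and 5. First I apply Lemma~\ref{split} repeatedly until every piece has at most $\eta n$ vertices, obtaining the split vertices $s_1,\ldots,s_k$. To bound $k$, I view the process as a binary splitting tree. Every final piece contains at least $\eta n/(3D)$ vertices: a piece is split only if it has more than $\eta n$ vertices, its two sides each get at least a third of it, and distributing the $\le D$ components around the split vertex costs at most a factor $D$. Hence the number of final pieces, and so $k$, is $O(D/\eta)$. With a slightly careful count (charging each split to the final pieces it produces) this gives $k\le D/\eta$, as stated in (b). This step is the one I expect to need the most care, since the constant in item 1 depends on it. Combining $k\le D/\eta$ with Fact~\ref{skeleton} gives $|V(T_S)|\le 20k\le 20D/\eta$, which is item 1.

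For item 2, every component of $T-T_S$ avoids all split vertices. Each split vertex separates the current piece into parts of size at most $\eta n$ at the end, so a connected subgraph avoiding all split vertices lies inside a single final forest $F_i$, and hence has at most $\eta n$ vertices. For item 3, every component of $T-T_S$ is attached to $T_S$ by at least one edge (Lemma~\ref{FokSkeletonba}), and $T_S$ has at most $20k$ vertices, each of degree at most $D$. Counting edges leaving $T_S$ therefore bounds the number of components by $20kD\le 20D^2/\eta$.

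For items 4 and 5, I invoke Lemma~\ref{FokSkeletonba} and revisit its proof.

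\emph{Components with one neighbor.} A component of $T-T'$ meets $T'$ in exactly one edge. If its neighbor is a marked vertex, the component stays a component of $T-T_S$ with $e(F,T_S)=1$.

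\emph{Components with two neighbors.} Otherwise the component is merged with the interior of a long maximal line $\ell$ with endpoints $y,y'$. The resulting component $F$ meets $T_S$ only through the two end edges of $\ell$, namely $yx$ and $y'x'$, where $x,x'$ are the second and penultimate vertices of $\ell$. This gives $e(F,T_S)\le 2$. Since $\ell$ has length at least $10$, we get $x\neq x'$. Moreover, $y$ and $y'$ lie in different components of $T_S$: the only $y$--$y'$ path in $T$ is $\ell$, and its interior has been deleted.

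A short-line interior vertex cannot serve as the attachment point of such a merged component, because short lines are kept entirely in $T_S$. This completes the verification of all five properties.
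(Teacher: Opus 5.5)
Your overall plan follows the paper: $T_S$ is the skeleton, item 1 comes from Fact~\ref{skeleton} plus a bound on the number $k$ of split vertices, item 2 from $T_S$ containing every split vertex, item 3 from counting edges leaving $T_S$, and items 4--5 from the proof of Lemma~\ref{FokSkeletonba}. Your handling of items 2, 4 and 5, and of the edge count in item 3, is fine. In the one-neighbour case you should add explicitly that a component of $T-T'$ attached at an inner vertex of a \emph{short} line is also a component of $T-T_S$ with a single edge to $T_S$; your ``otherwise'' skips this case.

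The gap is the bound $k\le D/\eta$, on which items 1 and 3 both rest. Your premise that every final piece has at least $\eta n/(3D)$ vertices is false when pieces are the components on which the splitting recurses. A split vertex may be adjacent to a leaf of $\Gamma$, and that leaf becomes a one-vertex final component. If ``piece'' means a side $\Gamma_1$ or $\Gamma_2$ instead, it has at least $\eta n/3$ vertices with no factor $D$, but a side need not be final. Even granting your premise, you only get $k\le 3D/\eta$, hence $|V(T_S)|\le 60D/\eta$. The ``slightly careful count'' that should improve this is never given. The paper also disposes of (b) in one line, so you have to supply it.

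One way to do it is a charging argument. Charge to each split $(\Gamma,x)$ the vertex $x$ together with those components of $\Gamma-x$ having at most $\eta n$ vertices; these charged sets are pairwise disjoint.
\begin{itemize}
\item If no component of $\Gamma-x$ is split further, the split is charged all of $\Gamma$, i.e.\ more than $\eta n$ vertices.
\item If exactly one component $C$ is split further, then $C$ lies inside one side, so $|C|\le 2|\Gamma|/3$ and hence $|\Gamma|>3\eta n/2$. The other side consists only of final components and has size at least $|\Gamma|/3>\eta n/2$, and it is charged.
\item Splits with at least two further-split components are branching nodes of the recursion tree. So there are fewer of them than splits of the first kind.
\end{itemize}
Write $a$ and $b$ for the numbers of splits of the first two kinds. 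Then $a\eta n+b\eta n/2<n$ and $k\le 2a+b-1<2/\eta\le D/\eta$. This gives items 1 and 3 exactly as stated.
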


The following definition will prove to be useful later. Given a rooted tree $\Gamma$ with root $\rho$ we define {\it level sets} of $\Gamma$: for $i\ge 0$ the
$i$th level set, $L_i(\Gamma)$ includes those vertices of $\Gamma$ which are at distance $i$ from $\rho.$
In particular, $L_0(\Gamma)=\{\rho\}.$

Finally, we define the {\it imbalance} of a tree $\Gamma.$ Assume that $\chi$ is a good 2-coloring of $\Gamma,$
$V_1$ denotes the set of vertices colored 1 by $\chi,$ and $V_2$ is the set of vertices colored 2. 
Then we let $$Imb(\Gamma)= \max\{|V_1|, |V_2|\} - \min\{|V_1|, |V_2|\}.$$ If $\Gamma$ is a forest with components
$\Gamma_1, \ldots, \Gamma_t,$ then we let $$Imb(\Gamma)=\sum_{i=1}^tImb(\Gamma_i).$$ Clearly, 
$0\le Imb(\Gamma)< v(\Gamma)$ for every tree or forest $\Gamma.$

This notion plays a crucial role in the proof of Theorem~\ref{fa}, as 
the embedding is done in increasing order of imbalances. 



\section{Further tools for the proof of Theorem~\ref{fa}}

After the preprocessing of $G$ and $T$ we need one further step before we can start the proof of Theorem~\ref{fa}. Our goal is to find an edge preserving bijective mapping, that is, an embedding function
$\Phi: V(T)\longrightarrow V(G).$ We construct the mapping $\Phi$ in several steps, beginning with the skeleton $T_S,$ 
and then extending this partial embedding by finding the images of the components of $T-T_S,$ one by one. Once we have determined $v=\Phi(x)$ for some $x\in V(T)$ and $v\in V(G),$ we will not change $\Phi(x).$

We construct the function $\Phi$ by a randomized algorithm. This algorithm has three phases. 
In the first phase, after preprocessing $G$ and $T,$ we embed the skeleton $T_S.$ 
In the second phase we cover the irregular vertices  (recall, that these are the vertices of $\bigcup_i(A''_i\cup B''_i)$) by components
in $T-T_S.$ 
Finally, in the third phase the vast majority of $T$ is embedded, using the very powerful Blow-up Lemma~\cite{KSSzBl2}.

Before presenting the three phases, we need two lemmas, which play essential roles in the embedding algorithm.
The first of these is used for connecting a subtree to be embedded to the already 
embedded skeleton. The second lemma is used for covering almost all irregular vertices, a small discrepancy can be tolerated. 
We state and prove these lemmas in Sections~\ref{connect} and~\ref{cover}, respectively. Finally, Section~\ref{alg} includes the embedding algorithm and its proof of correctness. 

We need the following hierarchy of the values of the constants used throughout the proof: $$0<\eta\ll \ep \ll d\ll \nu \ll \gamma <1/4,$$ here $\eta$ is the new constant we used for decomposing $T.$

\subsection{Building connections for vertices of $T$}\label{connect}

The embedding of $T$ will begin with the skeleton $T_S.$ After fixing the images of the vertices of $T_S,$
we will distribute the components of $T-T_S$ among the cluster pairs $G[\widehat{A}_i, \widehat{B}_i],$ $i\in [K].$ 
Given a component $F$ in $T-T_S$ we need to build the connection between $F$ and the skeleton. The lemma
below shows how to find this using the non-extremality of $G.$ 

\begin{lemma}\label{connect}
Let $u\in V$ be an arbitrary vertex, $H\subset V$ be a set with $|H| = 3D^3/\gamma,$ and 
$W\subset V-V_0-H$ be a set with $|W|\le \gamma n/10.$ Assume, that $\Gamma$ is rooted tree with
root $\rho$ and maximum degree $D$ such that $L_3(\Gamma)\neq\emptyset$ and 
$L_4(\Gamma)=\emptyset.$
Then $G$ has a copy of $\Gamma$ such that the image of $\rho$ is $u,$ the vertices of $L_3(\Gamma)$
are mapped onto vertices of $H,$ and the images of $L_1(\Gamma)\cup L_2(\Gamma)\cup L_3(\Gamma)$ avoid $W.$ 
\end{lemma}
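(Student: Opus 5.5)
The plan is to embed $\Gamma$ greedily, level by level, starting from $\rho\mapsto u$. The supply of walks from $u$ to $H$ of length three will come from $\gamma$-non-extremality applied to neighbourhoods. The reservoir $H$ is tiny, only constant size. So the crucial points are that every $h\in H$ should be reachable from $u$ by many such walks avoiding $W$, and that each chosen second-level image should have several \emph{distinct} neighbours in $H$.

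\textbf{Step 1 (many walks to each $h\in H$).} Fix $h\in H$. Since $\delta(G)\ge(1/2-\nu)n$, pad $N(u)$ and $N(h)$ by at most $\nu n$ arbitrary vertices each to reach size $\lfloor n/2\rfloor$ (or truncate). Non-extremality gives more than $\gamma n^2$ edges between the padded sets. Removing the padding costs at most $\nu n^2$ edges, and removing $W\cup H\cup\{u\}$ from both sides costs at most $2(\gamma n/10+O(1))n$ edges. Hence
$$e\bigl(N(u)\setminus W',\,N(h)\setminus W'\bigr)\ge \gamma n^2/2,\qquad W'=W\cup H\cup\{u\}.$$
Call $a\in N(u)\setminus W'$ \emph{good for $h$} if $|N(a)\cap N(h)\setminus W'|\ge \gamma n/8$. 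Averaging shows that at least $\gamma n/4$ vertices $a$ are good for each $h$. Summing over $h$ and averaging over $a$, there are at least $\gamma n/8$ vertices $a\in N(u)\setminus W'$ with $|H_a|\ge \gamma|H|/8$, where $H_a$ denotes the set of $h\in H$ for which $a$ is good. Since $|H|=3D^3/\gamma$, this gives $|H_a|\ge 3D^3/8$.

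\textbf{Step 2 (greedy embedding).} Process the children $x_1,\dots,x_p$ ($p\le D$) of $\rho$ one by one, keeping a set $U$ of used vertices; note $|U|\le D^3+D^2+D$ throughout. For $x_j$, pick an unused $a_j$ with $H_{a_j}\setminus U$ large; there are linearly many candidates, so one avoiding $U$ exists. Next, for each child $y$ of $x_j$, count edges between $N(a_j)\setminus(W'\cup U)$ and $H_{a_j}\setminus U$. By goodness, each $h\in H_{a_j}$ has at least $\gamma n/8-O(1)$ neighbours in $N(a_j)\setminus(W'\cup U)$. So this edge count is at least $|H_{a_j}\setminus U|\cdot \gamma n/9$, and averaging yields linearly many $b\in N(a_j)$ with at least $\gamma|H_{a_j}\setminus U|/9$ neighbours in $H_{a_j}\setminus U$. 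Map $y$ to such a $b$, and map the children of $y$ (at most $D$ of them) to distinct unused neighbours of $b$ in $H$. Everything stays inside $V\setminus W$ at levels $1,2$, and level $3$ lands in $H$, as required.

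\textbf{Main obstacle.} The bookkeeping in Step 2 is the delicate part: we need $\gamma|H_{a_j}\setminus U|/9\ge D$ at every moment, even though up to $D^3$ vertices of $H$ may already be used. With $|H_a|\ge\gamma|H|/8$ this requires $|H|$ somewhat larger relative to $D^3/\gamma$ than the stated $3D^3/\gamma$. I would first try to sharpen the averaging in Step 1. One route is to choose $a_j$ maximising $|H_a\setminus U|$, exploiting that the sum $\sum_a|H_a\setminus U|$ is at least $(\gamma n/4)\,|H\setminus U|$. Another is to apply the goodness threshold directly to the unused part $H\setminus U$, which still has at least $|H|-D^3$ elements. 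Either should give that each $a_j$ is good for a constant fraction of $H\setminus U$, close to a $\gamma$-independent fraction. If the constants still fail, the fallback is to note that the proof only needs $|H|\ge C D^3/\gamma^2$, and that the later application can supply such an $H$.
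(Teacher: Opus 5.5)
\textbf{There is a genuine gap.} It is the one you flag under ``Main obstacle'', and none of your proposed repairs closes it for the stated size $|H|=3D^3/\gamma$.

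First, Step 1 yields only $|H_a|\ge\gamma|H|/8=3D^3/8$. For $D\ge 3$ this is less than the up to $D(D-1)^2$ level-3 images, so $H_{a_j}\setminus U$ may be empty.

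More seriously, Step 2 loses a second factor of $\gamma$. You only know that $a_j$ and each $h\in H_{a_j}$ have about $\gamma n/8$ common neighbours. So a vertex $b\in N(a_j)$ is guaranteed only about $\gamma|H_{a_j}\setminus U|/9=O(\gamma D^3)$ neighbours in $H$. This is below $D$ whenever $D^2$ is small compared with $1/\gamma$ (e.g.\ $D=3$, $\gamma=1/100$).

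Maximising $|H_a\setminus U|$ over $a$, or re-thresholding on $H\setminus U$, still routes everything through the sets $H_a$, so the two averaging losses still compound. With any goodness threshold $\theta n$ you get $|H_a|$ of order $(\gamma-\theta)|H|$. The guaranteed average of $\deg(b,H_a)$ is then of order $\theta(\gamma-\theta)|H|=O(\gamma^2|H|)=O(\gamma D^3)$. Your fallback proves a weaker lemma, with $|H|\ge CD^3/\gamma^2$, not the one stated.

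\textbf{The missing idea} is to put the threshold on the level-2 side, relative to all of $H$, before invoking non-extremality. Let $U^*$ be the set of vertices with at least $D^3$ neighbours in $H$. Bounding $|U^*|$ needs only the minimum degree. If $\alpha n$ vertices have fewer than $D^3$ neighbours in $H$, counting edges at $H$ gives $\alpha nD^3+(1-\alpha)n|H|\ge(1/2-\nu)n|H|$. Since $D^3=\gamma|H|/3$, this yields $\alpha\le\frac{3}{3-\gamma}(\frac12+\nu)<\frac{1+\gamma}{2}$, so $|U^*\setminus(W\cup H)|\ge n/2-3\gamma n/4$.

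Now apply $\gamma$-non-extremality once, to $N(u)\setminus(W\cup H)$ and $U^*\setminus(W\cup H)$, each padded to $\lfloor n/2\rfloor$ vertices at a cost of at most $\gamma n^2/2$ edges. This gives at least $\gamma n/2$ vertices of $N(u)$, each with at least $\gamma n/2$ neighbours in $U^*$. The greedy embedding then needs no bookkeeping on $H$, because every $b\in U^*$ has $D^3>D(D-1)^2\ge|L_3(\Gamma)|$ neighbours in $H$. This is the paper's proof.

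If you prefer to keep your per-$h$ applications of non-extremality, you get the same effect by summing them before thresholding. Take $a$ in your fixed truncated copy of $N(u)\setminus W'$, which has at most $n/2$ vertices. Then $\sum_{a}\sum_{b\in N(a)\setminus W'}\deg(b,H)\ge|H|\gamma n^2/2=3D^3n^2/2$, summed over at most $n^2/2$ pairs $(a,b)$. So the pairs with $\deg(b,H)\ge D^3$ carry weight more than $D^3n^2$, and hence there are more than $\gamma n^2/3$ of them. This gives linearly many admissible $a$, each with linearly many admissible $b$, and loses only one factor of $\gamma$.
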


\noindent {\bf Proof:} Let $N^-(u)=N(u)\cap (V-V_0-W-H),$ then $|N^-(u)|\ge (1/2 -\gamma/4)n.$ 
Next we estimate the number of those vertices in $G$ which have at least $D^3$ neighbors in $H.$
Denote $\alpha n$ the number of those vertices which have {\it  less} than $D^3$ neighbors in $H,$ 
then $G$ has $(1-\alpha)n$ vertices with at least $D^3$ neighbors. Using that the number of edges
incident to vertices of $H$ is at least $|H| (1/2-\gamma/4)n,$ we obtain the following inequality:
$$\alpha n D^3 +(1-\alpha)n|H| \ge \left(\frac{1}{2}-\frac{\gamma}{4}\right)|H| n.$$ Rearranging gives that
$$\alpha \le \frac{|H|}{|H|-D^3}\left(\frac{1}{2}+\frac{\gamma}{4}\right) = 
\frac{3}{3-\gamma}\left(\frac{1}{2}+\frac{\gamma}{4}\right) < \frac{1+\gamma}{2}.$$ Hence, there is a set 
$U\subset V-W-H$
with $|U|\ge (1-\gamma)n/2-\gamma n/10-|H|\ge n/2-3\gamma n/4$ 
such that $deg(a, H)\ge D^3$ for every $a\in U.$ If we add at most $\gamma n/4$ new vertices to 
$N^-(u)$ to make it a set with cardinality precisely $n/2,$
and similarly, if we add at most $3\gamma n/4$ vertices to $U$ in order to obtain a set with precisely
$n/2$ vertices, then we may add at most $(\gamma n/4 + 3\gamma n/4) \cdot n/2=\gamma n^2/2$ 
new edges to the bipartite subgraph $G[N^-(u), U].$

Hence, by $\gamma$-non-extremality of $G,$ we have $$e(N^-(u), U)\ge \gamma n^2-\gamma n^2/2=\gamma n^2/2.$$ This implies that $N^-(u)$ has at least $\gamma n/2>|L_1(\Gamma)|$ such vertices which each has at least $\gamma n/2>|L_2(\Gamma)|$ neighbors in $U.$ Since $|L_3(\Gamma)|<D^3,$ 
using the definition of $U$ we can find the desired copy of $\Gamma,$ which satisfies all requirements.
\qedf

\subsection{Covering the vast majority of the irregular vertices}\label{cover}

For embedding the vast majority of 
$T$ we will use the Blow-up lemma~\cite{KSSzBl2}, but for applying it we need super-regularity.  
Recall, that $\widehat{A}_i=A'_i\cup A''_i$ and similarly, $\widehat{B}_i=B'_i\cup B''_i,$ for $1\le i\le K,$ 
where $A'_i=A_i\cap \widehat{A}_i$ and $B'_i=B_i\cap \widehat{B}_i,$ 
while $A''_i$ and $B''_i$ denote those vertices, 
that either belonged to $V_0,$ or changed cluster during the balancing procedure. Recall also, that $V''=(\cup_iA''_i) \cup (\cup_iB''_i)$ is the set of the {\it irregular} vertices. Let us fix $i\in [K]$ for the rest of this subsection, and without loss 
of generality, we describe, how to cover $A''_i\cup B''_i.$

If $|A''_i|, |B''_i|\le \ep^2m_i$ (note, that $m_i=|\widehat{A}_i|=|\widehat{B}_i|$),
then  $G[\widehat{A}_i, \widehat{B}_i]$ is a $(6\ep, d/3-\ep)$-super-regular pair by Lemma~\ref{beilleszt} and Fact~\ref{slicing}, and we do not need any further preparations,
we can apply the Blow-up lemma.

If not, either $A''_i$ or $B''_i$ is large, then we cannot guarantee super-regularity. 
In this case we will cover almost all irregular vertices with a few components from $T-T_S,$ the total
number of vertices in these components will be at most $4(|A''_i|+|B''_i|).$ Hence, the vast majority of 
$\widehat{A}_i\cup \widehat{B}_i$ will remain vacant. 

We will use a simple greedy method for covering the irregular vertices, which is outlined below in Fact~\ref{moho}. 
Proving the correctness of it is straightforward, we leave it for the reader. 

\begin{fact}\label{moho}
Let $\Gamma$ be a forest with levels sets $L_0, \ldots, L_k,$ and $J$ be a bipartite graph with parts $U_0$ and $U_1.$
Assume, that the partial embedding function $\phi$ have already been found for the first $t\le k-1$ level sets of $\Gamma$
such that $\phi(L_i)\subseteq U_{\langle i \rangle}$ for $0\le i\le t$ (here $\langle{i}\rangle= i \mod 2$).
Let $U'_{\langle t+1 \rangle}$ denote the vacant subset of $U_{\langle t+1 \rangle}.$ If $deg(u, U'_{\langle t+1 \rangle})\ge |L_{t+1}|$ for every $u\in \phi(L_t),$ then $\phi$ can be extended
to a partial embedding of the first $t+1$ levels of $\Gamma.$   
\end{fact}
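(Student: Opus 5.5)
We extend $\phi$ one level at a time, embedding the vertices of $L_{t+1}$ one by one and always choosing an image that is still vacant. The key observation is that the embedding is forced into the right side of $J$. Since $\Gamma$ is a forest, every $y\in L_{t+1}$ has exactly one neighbor in $L_0\cup\dots\cup L_t$, namely its parent $x\in L_t$. Hence placing $y$ only requires $\phi(y)$ to be adjacent to $\phi(x)$. Moreover, $\phi(x)\in U_{\langle t\rangle}$ and $J$ is bipartite, so every $J$-neighbor of $\phi(x)$ lies in $U_{\langle t+1\rangle}$, and the parity requirement $\phi(L_{t+1})\subseteq U_{\langle t+1\rangle}$ holds automatically.

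Concretely, order $L_{t+1}=\{y_1,\dots,y_m\}$ with $m=|L_{t+1}|$. Suppose $y_1,\dots,y_{j-1}$ are already placed and let $x$ be the parent of $y_j$. Let $U'_{\langle t+1\rangle}$ be the set vacant before this level. By hypothesis $\deg(\phi(x),U'_{\langle t+1\rangle})\ge m$, and we have used at most $j-1\le m-1$ of those vertices so far. So at least one vertex $v\in N(\phi(x))\cap U'_{\langle t+1\rangle}$ is still unused, and we set $\phi(y_j)=v$.

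We then check that this gives a partial embedding. Injectivity holds because each new image is vacant and distinct from all earlier images; the parity classes keep earlier levels of the opposite parity out of $U_{\langle t+1\rangle}$, and the vacancy of $U'_{\langle t+1\rangle}$ excludes earlier levels of the same parity. Edges are preserved because the only new edges of $\Gamma$ are parent–child edges between $L_t$ and $L_{t+1}$, and each of these is mapped to an edge of $J$ by construction.

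The only point that needs care is the counting: $L_{t+1}$ may draw on a single shared vacant pool, but the degree bound uses the full $|L_{t+1}|$ rather than the number of children of $x$. This is exactly what makes the greedy choice succeed even when all of $L_{t+1}$ competes for the same neighbors.
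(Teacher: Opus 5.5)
Your greedy argument is correct, and it is exactly the verification the paper has in mind: the paper calls this fact straightforward and leaves the proof to the reader. Each vertex of $L_{t+1}$ has a unique parent in $L_t$, so only parent--child edges need to be preserved. The uniform bound $\deg(u,U'_{\langle t+1\rangle})\ge |L_{t+1}|$ then guarantees that a vacant neighbor remains at every step, even though all of $L_{t+1}$ draws on one shared pool.
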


As it was mentioned earlier, the embedding of the forests of $T-T_S$ is done in increasing order of their imbalances. 
Hence, after finishing the second phase, the imbalances of  unembedded forests will be at least as large as the largest
imbalance of those forests, which have already been embedded. 

\smallskip
  
We will follow a general scheme. 
Let us consider $A''_i$ 
(analogous method is used for $B''_i$): using Lemma~\ref{kimerito} below, we embed components of 
$T-T_S$ one by one into $G[\widehat{A}_i, \widehat{B}_i],$ such that at least about 25\% of the vertices in the components are mapped onto
vertices of $A''_i,$ and at most about 75\% are mapped into $G[A'_i, B'_i].$ Only the first three levels of a component could be scattered around in $G,$ since for these we use Lemma~\ref{connect}.
Therefore, after finishing the third phase, at most about $3|A''_i|$ and $3|B''_i|$ vertices will be covered in $A'_i$ and 
in $B'_i,$ respectively. Hence, what is left will still be a $2\ep$-regular pair. 

The key lemma for the third phase is Lemma~\ref{kimerito} below. Before stating it, we need preparations. 

Using random coin flipping, we choose random subsets
$A^0_R\subset A'_i$ and $B^0_R\subset B'_i.$ Applying Chernoff's inequality, with high probability we have 
$(1-\ep)|A'_i|/2\le |A^0_R| \le (1+\ep)|A'_i|/2,$ and $(1-\ep)|B'_i|/2\le |B^0_R| \le (1+\ep)|B'_i|/2,$ which
already implies, using Fact~\ref{slicing}, that the subgraph $G[A^0_R, B^0_R]$ is $3\ep$-regular and has density $d_i\pm 2\ep.$ 

We also have that $deg(v, A'_i) (1/2-\ep)\le deg(v, A^0_R)\le deg(v, A'_i)(1/2+\ep)$ for every $v\in \widehat{B}_i$
and $deg(v, B'_i) (1/2-\ep)\le deg(v, B^0_R)\le deg(v, B'_i)(1/2+\ep)$ for every $v\in \widehat{A}_i$ with high probability.
Recall, that if $v\in A''_i,$ then $deg(v, B'_i)\ge \gamma^3|B_i|/3,$ and similarly, if $w\in B''_i,$ then 
$deg(w, A'_i)\ge \gamma^3|A'_i|/3.$
Hence, with high probability, every $v\in A''_i$ will have at least $\gamma^3|B^0_R|/4$ neighbors in $B^0_R,$
and $deg(w, A^0_R)\ge \gamma^3|A^0_R|/4$ for every $w\in B''_i.$ 

For covering $A''_i$ we will embed components in $T-T_S$ into the subgraph $G[A^0_R\cup A''_i, B^0_R],$ 
except at most a constant number of vertices. 
Similarly, we use the subgraph
$G[A^0_R, B^0_R\cup B''_i]$ and a bounded number of vertices from the rest of $V(G)$ for covering vertices of $B''_i.$
 
As we cover more and more vertices, the unoccupied subsets of $A^0_R, B^0_R$ and $A''_i, B''_i$ shrink. 
Let us denote the unoccupied subset of $A^0_R$ by $A_R,$ the unoccupied subset of $B^0_R$ by $B_R,$ and the 
unoccupied subsets of $A''_i$ and
$B''_i$ by $U_A$ and $U_B,$ respectively.
Our goal is to prove that if $|A_R|$ and $|B_R|$ are sufficiently large, then we can embed a new component $F\subset T-T_S$
such that at least $|F|/4-2D^2$ vertices are covered in $U_A$, or in $U_B.$

By Proposition~\ref{Tdecomp}, a component $F$ of $T-T_S$ may have either one or two neighbors in $T_S.$ Since certain technical difficulties arise, when $F$ has two neighbors in $V(T_S),$ 
the proof of Lemma~\ref{kimerito} below is divided into two cases. We first assume that between $F$ and $T_S$ there
is precisely one edge. Then the case of two edges will be easy to deduce from the one edge case.  

\begin{lemma}\label{kimerito} 
Let $F$ be an unembedded component in $T-T_S$ with $|F|\le \eta n \le 5\ep^4 m_i,$ where $m_i= |\widehat{A}_i|=|\widehat{B}_i|.$ 
If $F$ has one neighbor, $s\in V(T_S),$ then let $xs\in E(T)$ denote the edge which connects $F$ to $T_S.$  If $F$ has two 
neighbors in the skeleton, $s$ and $s',$ then denote the two connecting edges by $xs$ and $x's'.$    
Assume that the partial embedding function $\Phi$ has already been constructed for $V(T_S)\cup V',$ where 
$V'\subset V(T)-V(T_S)$ with
$|V'|\le \gamma n/20.$ 
Assume further that $|A_R|\ge |A^0_R|-5|A''_i|,$ $|B_R|\ge |B^0_R|-5|B''_i|,$ and $|U_A| \ge \ep^3m_i.$ 
Then we can extend $\Phi$ for $V(F)$ such that 
\begin{itemize}
\item $\Phi(x)\Phi(s)\in E(G),$ and if $x's'$ exists, $\Phi(x')\Phi(s')\in E(G),$

\item $|\Phi(V(F))\cap (U_A\cup A_R\cup B_R)|\ge |F|-2D^3,$ 

\item and we cover at least $|F|/4-2D^2$ vertices of $U_A.$ 
\end{itemize}
Analogous statement holds for $U_B$ in place of $U_A,$ if $|U_B|\ge \ep^3m_i.$
\end{lemma}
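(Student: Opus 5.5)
We treat first the case when $F$ is joined to $T_S$ by the single edge $xs$, and root $F$ at $x$. Let $a$ be the image $\Phi(s)$, which is already fixed. The plan is to connect $a$ to the random pair $G[A_R\cup U_A, B_R]$ through a short bridge built by Lemma~\ref{connect}, and then embed the rest of $F$ level by level with the greedy Fact~\ref{moho}. The parity is chosen so that at least one of the two colour classes of $F$, minus the first few levels, lands in $A_R\cup U_A$. Inside $A_R\cup U_A$ we always prefer vacant vertices of $U_A$.

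\textbf{The connection.} Consider the subtree $\Gamma$ consisting of $s$ (as root $\rho$), $x$, and the vertices of $F$ at distance at most $2$ from $x$. Then $L_3(\Gamma)$ consists of the vertices at distance $2$ from $x$. If $L_3(\Gamma)=\emptyset$, then $F$ is tiny and only the first bullet matters. This case is handled by a direct application of the degree condition together with the last bullet's $-2D^2$ slack. Otherwise, let $H$ be a set of $3D^3/\gamma$ vacant vertices chosen inside the appropriate side. We take $H\subset B_R$ if the level parity makes $L_3(\Gamma)$ go to the $B$-side, and $H\subset A_R\cup U_A$ otherwise. Let $W$ be the set of all occupied vertices, that is $\Phi(V(T_S)\cup V')$, together with $V_0$-type forbidden vertices. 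Then $|W|\le \gamma n/20+20D/\eta\le\gamma n/10$. Lemma~\ref{connect} maps $x$ to a neighbour of $a$ and places $L_3(\Gamma)$ into $H$. At most $1+D+D^2\le 2D^2$ vertices of $F$ are scattered outside $U_A\cup A_R\cup B_R$, which gives the second bullet with room to spare.

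\textbf{Greedy extension.} From level $3$ of $\Gamma$ onward, the levels of $F$ alternate between $B_R$ and $A_R\cup U_A$. We apply Fact~\ref{moho} with $U_0=B_R$ and $U_1=A_R\cup U_A$. Each level placed in $U_1$ is put into $U_A$ whenever the parent has at least $D$ vacant neighbours in $U_A$; otherwise it goes into $A_R$. For the degree hypothesis we need three facts:
\begin{itemize}
\item every vertex of $A''_i$ has at least $\gamma^3|B^0_R|/4$ neighbours in $B^0_R$;
\item at most $5|B''_i|+|F|$ of these neighbours are occupied, which is far less than $\gamma^3|B^0_R|/4$ since $|A''_i|,|B''_i|\le\gamma^4m_i$ and $|F|\le 5\ep^4m_i$;
\item vertices of $A_R,B_R$ have degree about $d|B^0_R|/6$ into the other side by super-regularity of $G[A'_i,B'_i]$, and only an $O(\gamma^4)$ fraction of that side is used.
\end{itemize}
Hence every image has at least $D$ vacant neighbours in the next level's target.

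\textbf{Main obstacle: forcing a quarter of $F$ into $U_A$.} A $B_R$-vertex must have many vacant neighbours in $U_A$, and $U_A$ is small. The approach is as follows.

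First, note that $e(U_A,B_R)\ge |U_A|\gamma^3|B_R|/5$. So a constant fraction, at least $\gamma^3|B_R|/10$, of $B_R$ are \emph{good} vertices, each with at least $\gamma^3|U_A|/10\ge\gamma^3\ep^3m_i/10\gg D$ neighbours in $U_A$.

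Second, restrict the image of the $B$-levels to good vertices. By $\ep$-regularity of $G[A^0_R,B^0_R]$ (good vertices form a set of size greater than $3\ep|B_R|$), all but $3\ep|A_R|$ vertices of $A_R$ have at least $(d/6)\gamma^3|B_R|/10\gg|F|$ good neighbours. Vertices of $U_A$ have many good neighbours as well, after shrinking the good set appropriately via the same counting. We keep the $A$-side images among these typical vertices. Then every level in the $A$-side can be embedded entirely into $U_A$.

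Third, choose the parity. Of the two colour classes of $F$ we send the larger one to the $A$-side. This is possible by choosing $H$ on the corresponding side in the connection step. That class contains at least $|F|/2$ vertices, minus at most $2D^2$ vertices from the first levels.

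The stated bound $|F|/4-2D^2$ leaves room for losses, for instance if some $U_A$-placement fails near the end when $U_A$ is nearly depleted. As long as $|U_A|\ge\ep^3m_i\gg|F|$, it does not fail.

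\textbf{Two connecting edges.} If $F$ also has the edge $x's'$, we treat the path from $x$ to $x'$ in $F$ specially. We root at $x$ as above. When the greedy process reaches $x'$'s parent $p$, we choose $\Phi(x')$ as a common neighbour of $\Phi(p)$ and $\Phi(s')$. By Proposition~\ref{Tdecomp} the path is long (at least $10$), so there is room for this. Alternatively, we cut $F$ at the path and apply Lemma~\ref{connect} from $\Phi(s')$ backwards, meeting the greedy embedding at a vertex lying in the middle of the line. This adds only $O(D^3)$ scattered vertices, matching the $2D^3$ allowance in the second bullet.
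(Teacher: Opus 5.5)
Your proposal has a genuine gap at exactly the step you flag as the main obstacle. To alternate between $U_A$ and the good set $\widetilde{B}$, you need every $U_A$-image to have vacant neighbours in $\widetilde{B}$, and nothing guarantees this. An irregular vertex has roughly $\gamma^3|B_R|/5$ neighbours in $B_R$, but they may all miss $\widetilde{B}$. For example, take $U_2\subset U_A$ with $|U_2|<\gamma^3|U_A|/10$ whose neighbourhood in $B_R$ sees no other vertex of $U_A$.

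``Shrinking via the same counting'' does not repair this. Once you restrict the $U_A$-images to vertices with many good neighbours, a good vertex may have all its $U_A$-neighbours among the discarded ones. Your fallback into $A_R$ does not help either. The only guaranteed degree of a vertex of $A_R$ or $B_R$ across the pair is of order $d\,m_i$, while up to $5\gamma^4 m_i$ vertices per side may already be occupied, and $d\ll\gamma^4$. So minimum-degree bookkeeping gives nothing there; you must use regularity of $G[A_R,B_R]$ and map only onto typical vertices.

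This is why the paper uses the $4$-step zigzag $U_A\to B_R\to A_R\to\widetilde{B}\to U_A$:
\begin{itemize}
\item an irregular vertex has many typical neighbours in $B_R$;
\item regularity leads from there to the vertices of $A_R$ with many neighbours in $\widetilde{B}$ (Fact~\ref{ARbol}, which you essentially state yourself);
\item every vertex of $\widetilde{B}$ leads back into $U_A$.
\end{itemize}
The price is that only the heaviest residue class $\cL(\tau)$ of levels modulo $4$, so about $|F|/4$ vertices, lands in $U_A$. The first levels are set up by choosing $H$ according to $\tau$.

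Your $2$-periodic route can be rescued by a core argument. Repeatedly delete vertices of $U_A$ with fewer than $\gamma^3|B_R|/20$ surviving neighbours in $B_R$, and vertices of $B_R$ with fewer than $\gamma^3|U_A|/20$ surviving neighbours in $U_A$. Fewer than $\gamma^3|U_A||B_R|/10<e(U_A,B_R)$ edges are lost, so a nonempty pair $(U^*,B^*)$ survives with both minimum degrees. Moreover $\gamma^3\ep^3 m_i/20>|F|+D$. Greedy embedding inside $G[U^*,B^*]$, with $H$ chosen in $U^*$ or $B^*$, would then even put about $|F|/2$ vertices into $U_A$. However, this argument is absent from your proposal, and what stands in its place is false.

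The two-edge case also contains a step that fails. Since $\delta(G)$ may be below $n/2$, the vertices $\Phi(p)$ and $\Phi(s')$ need not have any common neighbour. Even if they do, it need not be a vacant one from which the subtrees hanging off $x'$ can be continued. Non-extremality only yields a path of length three between two given vertices avoiding $W$ (Claim~\ref{ossze}). Your ``meet in the middle'' alternative needs exactly such a tool for joining two already fixed images, and you do not supply one.

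The paper proceeds as follows:
\begin{itemize}
\item it embeds the component of $x_1$ in $F-\{x_{t-1},x_t\}$ by Case I;
\item it joins $\Phi(x_{t-2})$ to $\Phi(s')$ by such a length-three path, whose inner vertices become $\Phi(x_{t-1})$ and $\Phi(x_t)$;
\item it embeds the at most $2D-4$ subtrees hanging off $x_{t-1},x_t$ by Case I.
\end{itemize}
The per-subtree losses of at most $D-1$, plus the two path vertices, are what the $2D^2$ slack absorbs.
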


\noindent {\bf Proof:} Without loss of generality we will consider the case of covering a subset of $U_A.$ 
Since $A_R$ and $B_R$ are large subsets of $A'_i$ and $B'_i,$ respectively, the following is immediate by Proposition~\ref{Gdecomp} and Fact~\ref{slicing}:

\begin{fact}\label{kvazirnd}
The subgraph $G[A_R, B_R]$ is $5\ep$-regular with density $d_i\pm 2\ep.$
\end{fact}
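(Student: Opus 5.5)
The plan is to apply the Slicing lemma (Fact~\ref{slicing}) once, directly to the original pair $G[A_i, B_i]$ produced by Corollary~\ref{felbontas2}, rather than chaining regularity bounds through $G[A'_i,B'_i]$. That original pair is $\ep$-regular with density $d_i$, and we have the chain of inclusions $A_R\subseteq A^0_R\subseteq A'_i\subseteq A_i$ and $B_R\subseteq B^0_R\subseteq B'_i\subseteq B_i$. So it is enough to show that $A_R$ and $B_R$ each occupy a fixed constant fraction $\alpha$ of $A_i$ and $B_i$, respectively, with $\alpha$ large enough that $\ep/\alpha\le 5\ep$.

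First I bound $|A_R|$ from below. In the proof of Proposition~\ref{Gdecomp} we saw that $|A'_i|\ge 2|A_i|/3$ with high probability. By the choice of $A^0_R$ we have $|A^0_R|\ge (1-\ep)|A'_i|/2$. The hypothesis of Lemma~\ref{kimerito} gives $|A_R|\ge |A^0_R|-5|A''_i|$. Proposition~\ref{Gdecomp} gives $|A''_i|\le \gamma^4 m_i$, and $m_i\le |A'_i|/(1-\gamma^4)$ because $|A'_i|\ge (1-\gamma^4)m_i$. Combining these,
$$|A_R|\ge \Big(\tfrac{1-\ep}{2}-\tfrac{5\gamma^4}{1-\gamma^4}\Big)|A'_i|\ge \tfrac{9}{20}|A'_i|\ge \tfrac{3}{10}|A_i|,$$
using $\ep,\gamma\ll 1$. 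The identical computation with $|B'_i|\ge 2|B_i|/3$ and $|B''_i|\le\gamma^4 m_i$ gives $|B_R|\ge \tfrac{3}{10}|B_i|$.

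Now I apply Fact~\ref{slicing} with $\alpha=3/10>\ep$. It yields that $G[A_R,B_R]$ is $\max\{10\ep/3,\,2\ep\}$-regular, which is at most $5\ep$-regular, and that its density is within $\ep$ of $d_i$, so certainly $d_i\pm 2\ep$.

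The only point needing care is the bookkeeping in the size estimate. Specifically, $|A''_i|$ is controlled in terms of $m_i$, not $|A'_i|$, so I must convert between $m_i$ and $|A'_i|$ as above. I also rely on the high-probability bounds already established in Proposition~\ref{Gdecomp} and in the random choice of $A^0_R$. Once those are in place, the argument is a single invocation of the Slicing lemma.
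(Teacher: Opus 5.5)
Your proof is correct and is essentially the paper's argument: the paper also justifies this Fact by a single appeal to the Slicing lemma (Fact~\ref{slicing}), using that $A_R,B_R$ are large subsets of $A'_i,B'_i$. The one difference is which pair you slice. The paper slices the $2\ep$-regular pair $G[A'_i,B'_i]$ of Proposition~\ref{Gdecomp}. You slice the original $\ep$-regular pair $G[A_i,B_i]$ of Corollary~\ref{felbontas2}, which costs you the extra bound $|A'_i|\ge 2|A_i|/3$ but gives the density bound $d_i\pm\ep$ directly. Chaining two slicings through $G[A'_i,B'_i]$ would naively give only $d_i\pm 3\ep$, not the stated $d_i\pm 2\ep$.
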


Let $\widetilde{B}$ denote those vertices of $B_R$ that have at least $\gamma^3|U_A|/10>\gamma^3\ep^3m_i/10>|F|$ neighbors in $U_A.$
We have the following lower bound for $|\widetilde{B}|,$ which always holds, if $U_A$ is sufficiently large. Note, that we stop the covering procedure if $|U_A|$ becomes smaller that 
$\ep^3m_i.$

\begin{claim}\label{Bhullam}
If $|U_A|\ge \ep^3m_i,$ then $|\widetilde{B}|\ge \gamma^3|B_R|/10.$
\end{claim}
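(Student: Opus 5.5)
We double count the edges between $U_A$ and $B_R$. Every $v\in U_A\subseteq A''_i$ satisfies $deg(v, B^0_R)\ge \gamma^3|B^0_R|/4$; this was established with high probability right after $A^0_R, B^0_R$ were chosen. By hypothesis $|B_R|\ge |B^0_R|-5|B''_i|$. We also have $|B''_i|\le \gamma^4 m_i$ (Proposition~\ref{Gdecomp}) and $|B^0_R|\ge (1-\ep)|B'_i|/2\ge (1-\ep)(1-\gamma^4)m_i/2$. Hence at most $5\gamma^4 m_i\le 11\gamma^4|B^0_R|$ vertices of $B^0_R$ have been removed. It follows that
$$deg(v, B_R)\ge \left(\frac{\gamma^3}{4}-11\gamma^4\right)|B^0_R|\ge \frac{\gamma^3}{5}|B_R|$$
for every $v\in U_A$, using $\gamma\ll 1$ and $|B_R|\le |B^0_R|$. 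Summing over $U_A$ gives
$$e(U_A, B_R)\ge \frac{\gamma^3}{5}|U_A|\,|B_R|.$$

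For the upper bound, assume $|\widetilde{B}|<\gamma^3|B_R|/10$. Each vertex of $\widetilde{B}$ contributes at most $|U_A|$ edges. Each vertex of $B_R-\widetilde{B}$ contributes fewer than $\gamma^3|U_A|/10$ edges, by the definition of $\widetilde{B}$. Therefore
$$e(U_A, B_R)< \frac{\gamma^3}{10}|B_R|\,|U_A|+|B_R|\cdot\frac{\gamma^3}{10}|U_A|=\frac{\gamma^3}{5}|U_A|\,|B_R|,$$
which contradicts the lower bound. So $|\widetilde{B}|\ge \gamma^3|B_R|/10$.

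The only delicate point is the first step: keeping the degree loss from the removed part of $B^0_R$ negligible compared with $\gamma^3|B^0_R|$. This works because $|B''_i|\le\gamma^4 m_i$ and $|B^0_R|$ is about $m_i/2$. The hypothesis $|U_A|\ge\ep^3 m_i$ is not needed for the counting itself. It only guarantees that the threshold $\gamma^3|U_A|/10$ exceeds $|F|$, which matters later in the proof rather than in this claim.
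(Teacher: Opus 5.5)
Your proof is correct and follows the paper's argument. You derive the lower bound $e(U_A,B_R)\ge\gamma^3|U_A|\,|B_R|/5$ from the degrees of irregular vertices into $B^0_R$, minus at most $5|B''_i|\le 5\gamma^4 m_i$ removed vertices, and then use the same averaging over $B_R$ (as a contradiction rather than solving for the proportion $\alpha$); your per-vertex bound on the loss, $11\gamma^4|B^0_R|$, just makes explicit a step the paper only asserts.
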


\noindent {\bf Proof:} (of the claim) The number of edges between $U_A$ and $B^0_R$ is at least 
$\gamma^3|U_A| |B^0_R|/4.$ 
Since $|B_R|\ge |B^0_R|-5|B''_i|$ by our assumption and $|B''_i|\le \gamma^{4}|\widehat{B}_i|$ by Proposition~\ref{Gdecomp}, 
$$e(U_A, B_R)\ge \gamma^3|U_A| |B_R|/5.$$ Denote 
$\alpha\in (0,1)$ the proportion of vertices in $B_R$ which have less than $\gamma^3|U_A|/10$ neighbors in $U_A.$
Then we have the following inequality:
$$\alpha \gamma^3|B_R| |U_A|/10 + (1-\alpha) |B_R| |U_A| \ge \gamma^3 |B_R| |U_A|/5.$$ 
Simple computation gives, that $\alpha \le 1-\gamma^3/10,$ hence, $1-\alpha \ge \gamma^3/10,$ as desired.
\qedf

\medskip

Fact~\ref{kvazirnd} and Claim~\ref{Bhullam} imply the following. 

\begin{fact}\label{ARbol}
There are at least $(1-3\ep)|A_R|$ vertices in $A_R$ which all have at least 
$$(d_i-2\ep)|\widetilde{B}|\ge d_i \gamma^3|B_R|/15>5\ep |A_R|$$ neighbors in 
$\widetilde{B}.$ 
\end{fact}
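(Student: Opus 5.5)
The plan is the standard ``few vertices have low degree into a large set'' argument for regular pairs. We apply it to the pair $G[A_R, B_R]$ from Fact~\ref{kvazirnd}, taking as the second set the large subset $\widetilde{B}\subseteq B_R$ supplied by Claim~\ref{Bhullam}. The numerical inequality at the end then follows from the hierarchy $\ep\ll d\ll\gamma$ and from $|A_R|$ and $|B_R|$ being of comparable size.

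First, Claim~\ref{Bhullam} gives $|\widetilde{B}|\ge \gamma^3|B_R|/10$. Since $\ep\ll\gamma$, this is far above the regularity threshold, so $\widetilde{B}$ is a legitimate test set for the regularity of $G[A_R,B_R]$.

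Next, let $X\subseteq A_R$ be the set of vertices with fewer than $(d_i-2\ep)|\widetilde{B}|$ neighbours in $\widetilde{B}$, and suppose $|X|>3\ep|A_R|$.
\begin{itemize}
\item By definition of $X$, the pair $(X,\widetilde{B})$ has density below $d_i-2\ep$.
\item Regularity applied to $(X,\widetilde{B})$ forces this density to be within the regularity parameter of the density of $G[A_R,B_R]$, which is $d_i\pm 2\ep$. This is a contradiction, so at most $3\ep|A_R|$ vertices of $A_R$ are bad.
\end{itemize}
Here the bookkeeping of $\ep$-multiples is delicate: Fact~\ref{kvazirnd} only gives $5\ep$-regularity, while $X$ is only assumed to have size $3\ep|A_R|$. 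To get exactly the constants $3\ep$ and $2\ep$, I would instead argue inside the original $2\ep$-regular pair $G[A'_i,B'_i]$, using the densities there. Concretely:
\begin{itemize}
\item $A_R$ and $B_R$ are subsets of $A'_i$ and $B'_i$ of proportion about $1/2$. This holds because $|A_R|\ge |A^0_R|-5|A''_i|$, $|A''_i|\le\gamma^4 m_i$, and $|A^0_R|\approx |A'_i|/2$.
\item Hence sets of size $3\ep|A_R|$ and $\gamma^3|B_R|/10$ are still of size at least $2\ep$ times the parent clusters, up to the factor lost in halving.
\item If that halving factor makes the constants not quite match, I would simply accept a bound of the form $(1-c\ep)|A_R|$ vertices with degree at least $(d_i-c\ep)|\widetilde{B}|$ for a small absolute constant $c$. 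This is all that is used later.
\end{itemize}
I expect this to be the main obstacle, though it is purely cosmetic: the regularity parameter inherited through Fact~\ref{slicing} is a constant multiple of $\ep$, and the stated constants need to be matched to it.

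Finally, for the displayed chain we have
$$(d_i-2\ep)|\widetilde{B}|\ge (d_i-2\ep)\gamma^3|B_R|/10\ge d_i\gamma^3|B_R|/15,$$
because $2\ep\le d_i/3$, using $d_i\ge d/3$ and $\ep\ll d$. For the last inequality, $|B_R|$ and $|A_R|$ are both $(1/2\pm O(\gamma^4))m_i$, since both $A'_i,B'_i$ have size $(1-O(\gamma^4))m_i$ and the random halves are balanced by Chernoff. Therefore
$$d_i\gamma^3|B_R|/15\ge d\gamma^3|A_R|/50>5\ep|A_R|,$$
where the last step is exactly the hierarchy $\ep\ll d\ll\gamma$.
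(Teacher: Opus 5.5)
Your argument is the paper's: it derives this fact in one line from the $5\ep$-regularity of $G[A_R,B_R]$ (Fact~\ref{kvazirnd}) tested against the large set $\widetilde{B}$ from Claim~\ref{Bhullam}, and your check of the displayed chain (using $d_i\ge 6\ep$ and $|A_R|\approx|B_R|$) is correct. You are also right that the stated constants do not follow literally. One only gets $(1-5\ep)|A_R|$ vertices of degree at least $(d_i-7\ep)|\widetilde{B}|$, and the detour through the $2\ep$-regular pair $G[A'_i,B'_i]$ does not recover $3\ep$ since $3\ep|A_R|\approx 1.5\ep|A'_i|$. Your fallback to $c\ep$ constants suffices, because later the paper only uses that almost all of $A_R$ has more than $5\ep|A_R|$ neighbours in $\widetilde{B}$.
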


\medskip

When we embed a component $F$ in order to cover a relatively large portion of $U_A,$ we follow a ``zigzag'' scheme,
possibly except for the first few levels of $F$: from $U_A$ we find neighbors in $B_R,$ then continue to $A_R,$ from there
to $\widetilde{B},$ and then we arrive back to $U_A.$ This scheme is being repeated until $F$ is embedded.   

By Proposition~\ref{Tdecomp}, a component $F\subset T-T_S$ can have one or two neighbors in $T_S.$ First we prove the lemma for the case when $F$ has precisely one vertex $x,$ which has a neighbor 
$s\in V(T_S).$ 

\medskip

\noindent {\bf Case I:} {\it $F$ has precisely one neighbor in $T_S$}

\smallskip
 
For $t\in \{0, 1, 2, 3\}$ we let $$\cL(t)=\bigcup_{j\ge 0} L_{t+4j}(F).$$
Set $\tau$ to be an index for which $|\cL(\tau)|\ge |\cL(t)|$ for every $t\in \{0, 1, 2, 3\}.$ Hence, $|\cL(\tau)|\ge |F|/4.$
The embedding of the first couple of levels of $F$ depends on the value of $\tau.$ We give the details for different values of $\tau$
as an itemized list below. As soon as we reached $U_A$ at some
level of $F,$ the rest of the embedding is the same for every value of $\tau,$ following the above mentioned zigzag
scheme, while applying the method of Fact~\ref{moho}.

\begin{itemize}

\item [The first case: $\tau=0.$] 

Let $H\subset A_R$ be a set with $|H|=3D^3/\gamma$ such that 
every vertex of $H$ has at least $D^3$ neighbors in $\widetilde{B}.$ We can choose $H$ by Fact~\ref{ARbol}.
Let $W=\Phi(V(T_S))\cup \Phi(V')\cup H.$ It is easy to see that $|W|\le \gamma n/10.$

Next we apply Lemma~\ref{connect} to connect $u=\Phi(s)$ with the set $H$ while avoiding $W.$ 
Note, that $L_0(F)=\{x\},$ and
$L_1(F)=N_T(x)-\{s\}.$ We embed the first three levels
of $F$ such that $\Phi(s)\Phi(x)\in E(G),$ $\Phi(L_0(F)\cup L_1(F))\subset V(G)-W$ and 
$\Phi(L_2(F))\subset H.$ 
Then choose $|L_2(F)|$ vertices arbitrarily from $H,$ and continue the embedding into $\widetilde{B}$ -- this is doable, since 
$|L_3(F)|<D^3.$ 
Using that every vertex of $\widetilde{B}$ has at least $\gamma^3|U_A|/10> |F|\ge |L_4(F)|$ neighbors in $U_A,$ we can embed the first
five levels of $F$ so that $\Phi(L_4(F))\subset U_A.$ In this case $\Phi(L_0)\not\subseteq U_A.$ 

\item [The second case: $\tau=1.$] 

As before, we apply Lemma~\ref{connect} with $u=\Phi(s)$ and $W,$ but 
this time $H\subset B_r$:  we choose $H$ so that $deg(h, A_R)\ge d_i|A_R|/2$ for every $h\in H.$ By $5\ep$-regularity
of $G[A_R, B_R]$ this is possible. In addition, we have $|H|=3D^3/\gamma.$ 

For $\Phi(L_3(F))$ we choose vertices from $N(H)\cap A_R,$ which all have at least $|L_4(F)|$ neighbors in 
$\widetilde{B}.$ Since
$deg(h, A_R)\ge d_i|A_R|/2> |L_3(F)|+5\ep|A_R|,$ we have enough room. Then, as in the previous case, from 
$\widetilde{B}$ 
we continue the embedding to $U_A.$ This time $\Phi(L_5)\subset U_A,$ but $\Phi(L_1)\not\subseteq U_A.$ 

\item [The third case: $\tau=2.$]

This time we may choose $H$ from $U_A$ arbitrarily, since every irregular vertex has many neighbors,
hence, we have $\Phi(L_2)\subseteq U_A.$ 

\item[The fourth case: $\tau=3.$]

In this case we let $H\subset \widetilde{B}$ with $|H|=3D^3/\gamma,$ set $W$ as before, apply Lemma~\ref{connect}, and then continue to embed $L_3(F)$ into $U_A,$ hence, $\Phi(L_3)\subseteq U_A.$ 

\end{itemize}

After reaching $U_A,$ we use the zigzag scheme with the method of Fact~\ref{moho}, no matter what the value of 
$\tau$ is. We proceed level by level. From $U_A$ we can always continue the embedding
to $B_R,$ then to such vertices of $A_R$ which all have at least $|F|$ neighbors in $\widetilde{B},$ then
from $\widetilde{B}$ we reach $U_A$ again. Fact~\ref{ARbol} is crucial, it states that almost all vertices of $A_R$ have
at least $5\ep |A_R|>|F|$ neighbors in $\widetilde{B}.$ Using $5\ep$-regularity of $G[A_R, B_R],$ it is easy to reach such vertices 
of $A_R$ from almost all of $B_R.$ Hence, we never get stuck, and following the zigzag scheme we can embed every fourth level into $U_A.$ One can also check easily that $|\Phi(\cL(\tau))\cap U_A|\ge |F|/4-(D-1),$ and we ``lose'' $D-1$
only when $\tau=1$ (this loss is 1, if $\tau=0,$ and 0, if $\tau=2, 3$). 
This finishes the proof for the case when $F$ has precisely one neighbor in $T_S.$

\medskip

\noindent {\bf Case II:} {\it $F$ has precisely two neighbors in $T_S$}

\smallskip

Now we assume, that $F$ has two neighbors in the skeleton. By Proposition~\ref{Tdecomp} we know that there exist
$s, s'\in V(T_S)$ such that in $T'$ these vertices are connected by a long maximal line $\ell.$ All inner vertices of $\ell$ belong
to $F.$ Say, that $\ell= s e_1 x_1 e_2 x_2 e_3\ldots e_t x_t e_{t+1} s',$ here $|\ell|=t+1,$ and, by definition, $|\ell|\ge 10.$

We split $F$ into several subtrees, and embed them one by one. The first subtree, $F_1$ is the component of $x_1$ in $F-\{x_{t-1}, x_t\}.$ 
This component has one neighbor, $s\in V(T_S),$ hence, we can embed it with the method of {\bf Case I}. This implies that
the loss is at most $D-1$ at this point.

Next we need a simpler version Lemma~\ref{connect}, which is a direct consequence of the $\gamma$-non-extremality of $G.$
We leave the proof for the reader. 

\begin{claim}\label{ossze}
Let $u, v\in V(G)$ be two distinct vertices, and $W\subset V(G)$ be a set with $|W|\le \gamma n/10.$ Then $G$ has a $u-v$-path
of length three, which avoids $W.$
\end{claim}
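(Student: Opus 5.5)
We want a path $u\,a\,b\,v$ with $a\in N(u)$, $b\in N(v)$, $ab\in E(G)$ and $a,b\notin W\cup\{u,v\}$. So it suffices to find one edge of $G$ between the two large sets
$$N^-(u)=N(u)-W-\{v\}\quad\text{and}\quad N^-(v)=N(v)-W-\{u\}.$$
We will get this edge from $\gamma$-non-extremality, in the same way as in the proofs of Lemma~\ref{NonExt} and Lemma~\ref{connect}.

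First, the sizes. Since $\delta(G)\ge (1/2-\nu)n$ and $|W|\le \gamma n/10$, we have
$$|N^-(u)|,\ |N^-(v)|\ge (1/2-\nu-\gamma/10)n-1\ge n/2-\gamma n/8,$$
using $\nu\ll\gamma$. If either set has more than $\lfloor n/2\rfloor$ vertices, we pass to a subset of size exactly $\lfloor n/2\rfloor$. Otherwise we pad it with at most $\gamma n/8$ arbitrary extra vertices to reach size $\lfloor n/2\rfloor$. Call the resulting sets $A$ and $B$. They need not be disjoint, and the definition of non-extremality allows this.

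Second, the edge count. By $\gamma$-non-extremality, $e(A,B)>\gamma n^2$. Edges with an endpoint among the padding vertices number at most $2\cdot(\gamma n/8)\cdot n/2=\gamma n^2/8$. Hence more than $7\gamma n^2/8>0$ edges of $G$ join a vertex $a\in N^-(u)$ to a vertex $b\in N^-(v)$.

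Third, distinctness, which is the only delicate step. The sets $N^-(u)$ and $N^-(v)$ may overlap, so we need $a\ne b$; this holds automatically, since $ab$ is an edge and $G$ has no loops. We also need $a\neq v$ and $b\neq u$, which holds because we removed $v$ and $u$ from the sets. Finally $a\neq u$ and $b\neq v$ because $G$ is loopless. So $u,a,b,v$ are four distinct vertices, and $uabv$ is a $u$–$v$ path of length three whose inner vertices avoid $W$.

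If the statement is meant to require that $u$ and $v$ themselves avoid $W$, we only need to note that this is the caller's responsibility. In the applications $u=\Phi(s)$ and $v$ are prescribed images, and $W$ is the set of occupied vertices other than these.

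The step I expect to need most care is the bookkeeping in the padding, that is, checking $\gamma n^2/8<\gamma n^2$. This is routine given $\nu\ll\gamma$. There is ample room, so one can even find linearly many such paths and pick one at random if spreading is needed later.
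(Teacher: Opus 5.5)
Your proof is correct and is the argument the paper intends. The paper leaves the claim to the reader as a simpler version of the connection lemma. The argument is: delete $W$ (and the other endpoint) from $N(u)$ and $N(v)$, trim or pad both sets to size $\lfloor n/2\rfloor$, apply $\gamma$-non-extremality, and discard the at most $\gamma n^2/8$ edges that meet padding vertices. Your check that $u,a,b,v$ are distinct, and your remark that only the inner vertices need to avoid $W$, are also sound.
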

  
Note that we have already found $\Phi(x_{t-2})$ when embedding $F_1.$ Using Claim~\ref{ossze} we find a length-3 path 
$uf_1w_1f_2w_2f_3v,$ ($f_i\in E(G)$) such that $u=\Phi(x_{t-2}),$ $v=\Phi(s'),$ and the images of $x_{t-1}$ and $x_t$ are $w_1$ and $w_2,$
respectively. 

Let $F_2$ denote the subtree which we obtain as the component containing $x_{t-1}$ in $F-\{x_1, \ldots, x_{t-2}, x_t\}.$
Since $x_{t-1}$ has two neighbors in $\ell,$ it may have up to $D-2$ neighbors, $y_1, \ldots, y_q.$ Every $y_i$ is the root
of a component in $F_2-x_{t-1},$  which has precisely one neighbor, $x_{t-1}.$ For each of these components we can apply the method
of {\bf Case I} in order to embed them. This results in a loss of at most $(D-2)(D-1).$ 

Finally, let $F_3$ denote the subtree which we obtain as the component containing $x_{t}$ in $F-\{x_1, \ldots, x_{t-2}, x_{t-1}\}.$ 
Similarly to the case of $F_2,$ $x_t$ will have up to $D-2$ neighbors in $F_3,$ $z_1, \ldots, z_r,$ each being the root of
a component in $F_3-x_t.$ These components can be embedded with the method of  {\bf Case I}, Again, we will have a loss
of at most $(D-2)(D-1).$

We can use the bounds for $|F|/4 - |\Phi(F)\cap U_A|$ separately for each subtree. Since the number of subtrees in
{\bf Case II} is $2D-3,$ and we also have two vertices, $\Phi(x_{t-1})$ and $\Phi(x_{t}),$ which we may not map into
$U_A,$ altogether we have $$|\Phi(F) \cap U_A|\ge |F|/4-(2D-3)(D-1)+2\ge |F|/4-2D^2.$$
This finishes the proof of
the lemma.
\qedf

\section{The proof of Theorem~\ref{fa}}\label{alg}

We have all the tools needed for discussing the proof of Theorem~\ref{fa}. The embedding function $\Phi$ is determined in three phases.

\subsection{The first phase: preprocessing of $G$ and $T,$ and embedding the skeleton $T_S$}

The first phase consists of three parts. 

\begin{itemize}

\item [(i)] Preprocessing of $G$: Apply Theorem~\ref{pontfelbontas} and then Proposition~\ref{Gdecomp} for finding the $G[\widehat{A}_i, \widehat{B}_i]$ pairs for every $i\in [K].$ Set $m= d^{101/\ep^2}n/4,$ this is a lower bound for the cardinality of the smallest cluster in
the decomposition, that is, $m\le \min\{m_i: i\in [K]\}.$  
Let $\K_T= 20D^2n/(\ep^4 m)= 80D^2d^{-101/\ep^2}/\ep^4,$ a constant

\item[(ii)] Preprocessing of $T$: Apply Proposition~\ref{Tdecomp} with parameter $\eta=20D^2/\K_T\ll \ep$ for finding the decomposition of $T$ into the skeleton $T_S$ and the vertex-disjoint components $F_1, \ldots, F_k \subset T-T_S$ such that
$|F_i|\le \ep^4m$ for every $i\in [K],$ and $k\le \K_T.$ 

\item[(iii)] Embed $T_S$ into $G[\widehat{A}_1, \widehat{B}_1]$ greedily. This is easy, as $|T_S|\le 20Dn/(\ep^4m)=\K_T/D,$
and the minimum degree in $G[\widehat{A}_1, \widehat{B}_1]$ is much larger. With this we have determined
$\Phi(V(T_S)).$

 \end{itemize}

\subsection{The second phase: covering the irregular vertices}

Proposition~\ref{Gdecomp} can only guarantee that large portions of the cluster pairs are quasirandom, 
but they may contain a non-negligible number of irregular vertices. Below we discuss how to use Lemma~\ref{kimerito} for achieving this goal.

\medskip

Before using Lemma~\ref{kimerito} we form ``large chunks'' from the components of $T-T_S.$ These are forests, with
cardinality in the range of $\ep^4m$ and $2\ep^4m,$ 
except possibly one, which may contain more than $2\ep^4m,$ but less than $3\ep^4m$ vertices. 
Since we keep the vertices of short maximal lines in the skeleton, small components may appear in $T-T_S,$ if these have their neighbors in short maximal lines.
Hence, there could be some components  $F_i\subset T-T_S$ such that 
$|F_i|\ll \ep^4m.$ The following simple algorithm gives us the desired large chunks $CH_1, \ldots, CH_t$ from
the components $F_i\subset T-T_S,$ $i=1, \ldots, k.$

\begin{enumerate}

\item Organize the $F_i$ components into a list $\Lambda$ in size-increasing order: $\Lambda=F_1,\ldots, F_k,$ 
such that $|F_1|\le |F_2|\le \dots \le |F_k|.$

\item If $\sum_{F\in \Lambda} |F|\ge \ep^4m,$ then let $\Lambda'$ denote the sub-list containing the smallest prefix of $\Lambda$ such that total cardinality of the components in $\Lambda'$ is at least $\ep^4m.$ The new chunk is 
$CH=\bigcup_{F\in \Lambda'}F.$ Delete the elements of $\Lambda'$ from $\Lambda,$ and keep the notation 
$\Lambda$ for
what is left in the list. 

\item If $\sum_{F\in \Lambda} |F|< \ep^4m,$ then add all the components in $\mathcal{L}$ to the
most recently formed chunk, and stop.

\item If $\Lambda$ is empty, stop, otherwise continue with Step 2.
 
\end{enumerate}

\medskip

The following fact is immediate.

\smallskip

\begin{fact}\label{csank1}
The number $t$ of chunks is at most $\K_T.$ Every chunk $CH_i$ ($i=1, \ldots, t$) has cardinality between $\ep^4m$ and $3\ep^4m.$ Each chunk is a forest, 
containing components of $T-T_S.$
\end{fact}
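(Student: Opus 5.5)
The statement to prove is Fact~\ref{csank1}. I would read it directly off the chunking algorithm and the size bounds from Proposition~\ref{Tdecomp}.

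\emph{Each chunk is a forest of components.} The algorithm only ever takes unions of whole elements of $\Lambda$, and these are components of $T-T_S$. So every chunk is a disjoint union of such components. Since the components are vertex-disjoint subtrees, each chunk is a forest.

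\emph{Size bounds.} Every chunk is formed in Step 2, and we stop adding components the first moment the total reaches $\ep^4m$. Hence each freshly formed chunk has at least $\ep^4m$ vertices. The last component added has size at most $\eta n\le \ep^4m$, and before it was added the total was below $\ep^4m$. Therefore each freshly formed chunk has fewer than $2\ep^4m$ vertices. Step 3 is executed at most once, and only at termination. It adds fewer than $\ep^4m$ further vertices to the most recent chunk, so that chunk has fewer than $3\ep^4m$ vertices. One degenerate situation needs a remark: if $\sum_F|F|<\ep^4m$ already holds at the start, no chunk exists. This cannot happen, because $n-|T_S|\ge n-\K_T/D\gg \ep^4 m$.

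\emph{Number of chunks.} Each chunk contains at least one component of $T-T_S$, and distinct chunks contain distinct components. So $t\le k$. By Proposition~\ref{Tdecomp}(3) with $\eta=20D^2/\K_T$, the number of components is at most $20D^2/\eta=\K_T$. Alternatively, the bound follows from sizes: $t\cdot\ep^4m\le n$, so $t\le n/(\ep^4m)\le\K_T$.

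The only mildly delicate point is the upper bound $2\ep^4m$ for chunks produced in Step 2. It relies on each individual component having size at most $\eta n\le\ep^4m$, and this holds by the choice of $\eta$ in part (ii) of the first phase. Everything else in the argument is bookkeeping.
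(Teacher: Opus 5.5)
Your proposal is correct and matches what the paper intends. The paper simply declares the fact immediate from the chunking algorithm, and your argument supplies the same reasoning: $|F|\le\eta n=\ep^4m$ keeps each Step~2 chunk below $2\ep^4m$ and the Step~3 top-up below $3\ep^4m$, while $t\le k\le 20D^2/\eta=\K_T$ follows from Proposition~\ref{Tdecomp}. You also note that $n-|T_S|>\ep^4m$, so at least one chunk is formed before Step~3 can fire; the paper leaves this implicit, and it is a worthwhile addition.
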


\medskip

Recall, that the imbalance was defined not only for trees, but for forests as well. 
We will embed the chunks in {\it increasing order} of their imbalances.
If necessary, we change the indices of the chunks, and from now on we assume that $Imb(CH_i)\le Imb(CH_{i+1})$
for every $1\le i\le t-1.$

\medskip

By Lemma~\ref{beilleszt}, if less than a proportion of $\ep^2$ vertices is inserted to the clusters of an $\ep$-regular pair, 
the pair will remain quasirandom with a slightly worse parameter: it will become $3\ep$-regular. Hence, only that case is interesting for us when
there are relatively many irregular vertices in a pair. We cover the irregular vertices in the $G[\widehat{A}_i, \widehat{B}_i]$ pairs for every $i\in [K].$ Fix an arbitrary $i\in [K],$
and assume, without loss of generality, that $|A''_i|\ge \ep^3 m_i.$ Call a chunk {\it available}
if it have not been mapped yet. Let $CH$ denote a chunk having minimum imbalance among the available ones. 
Assume, that it contains $j$ components, $F_{i_1}, \ldots, F_{i_j}.$ Apply Lemma~\ref{cover} for all the components
in $CH,$ this covers at least $|CH|/4-2jD^2$ irregular vertices of $A''_i.$ Then make the chunk {\it unavailable}.

Note, that since the number of components in $CH$
is at most $\K_T,$ the number of vacant irregular vertices have been decreased by 
more than $|CH|/5\ge \ep^4m/5.$ 
If there are still more than $\ep^3m$ unoccupied irregular vertices in $A''_i,$ repeat the above procedure with
an available chunk having minimum imbalance. When we are done with $\widehat{A}_i,$ we repeat the procedure for 
$\widehat{B}_i.$

Since in the beginning we had that $|A''_i|, |B''_i|\le \gamma^4 m_i,$ we cover less than $5\gamma^4 m_i$ vertices
in $\widehat{A}_i$ and in $\widehat{B}_i.$ Moreover, since 
$\sum_{i\in [K]}|\widehat{A}_i\cup \widehat{B}_i|)=n,$ the total number of vertices in the chunks we used for the 
covering is less than $10\gamma^4 n.$ This, together with Fact~\ref{csank1} implies the following.

\begin{fact}\label{csank2}
The total number of chunks used for covering the irregular vertices in $\widehat{A}_i$ ($i\in [K]$) is less than
$5 \gamma^4 m_i/(\ep^4m),$ and their total imbalance is less than $5\gamma^4 m_i.$
Similar holds for $\widehat{B}_i.$
\end{fact}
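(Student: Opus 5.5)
This is a counting argument built on the covering procedure just described. Fix $i\in[K]$ and treat $\widehat{A}_i$; the case of $\widehat{B}_i$ is symmetric.

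\textbf{Bounding the number of chunks.} Every chunk used in the procedure for $A''_i$ is applied only while more than $\ep^3 m_i$ vertices of $A''_i$ are unoccupied. Each such application covers at least $|CH|/4-2jD^2$ new irregular vertices, where $j\le \K_T$ is the number of components in the chunk. By Fact~\ref{csank1}, $|CH|\ge \ep^4 m$, and since $\K_T D^2$ is a constant while $\ep^4m$ is linear in $n$, the loss $2jD^2$ is negligible. So each chunk decreases the number of vacant vertices of $A''_i$ by at least $|CH|/5\ge \ep^4m/5$. The procedure starts with at most $|A''_i|\le\gamma^4 m_i$ vacant irregular vertices (Proposition~\ref{Gdecomp}) and stops once at most $\ep^3m_i$ remain. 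Hence the number $N_i$ of chunks used satisfies
$$N_i\cdot \frac{\ep^4 m}{5}\le \gamma^4 m_i,$$
that is, $N_i\le 5\gamma^4 m_i/(\ep^4 m)$. Strictness comes from the last chunk, which is only started while more than $\ep^3m_i>0$ vertices are still vacant. Alternatively, one can use $|CH|/5$ per chunk together with the total-size bound below.

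\textbf{Bounding the total imbalance.} Let $CH^{(1)},\dots,CH^{(N_i)}$ be the chunks used. Summing the per-chunk coverage gives
$$\sum_\ell |CH^{(\ell)}|/5\le |A''_i|\le\gamma^4 m_i,$$
so the total size satisfies $\sum_\ell |CH^{(\ell)}|\le 5\gamma^4 m_i$. Imbalance of a forest is at most its number of vertices; this holds componentwise and is noted after the definition of $Imb$. Therefore
$$\sum_\ell Imb(CH^{(\ell)})\le\sum_\ell |CH^{(\ell)}|<5\gamma^4 m_i.$$

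\textbf{Checking the hypotheses, which is the main obstacle.} The real work is confirming that Lemma~\ref{kimerito} applies at every step, so that the per-chunk gain of $|CH|/5$ is actually available. I would argue by induction over all chunks used for all pairs.
\begin{itemize}
\item $|V'|\le\gamma n/20$: by the total-size bound, at most $10\gamma^4 n$ vertices of $T$ are embedded in the second phase.
\item $|A_R|\ge|A^0_R|-5|A''_i|$ and $|B_R|\ge |B^0_R|-5|B''_i|$: the chunks used for pair $i$ total at most $5\gamma^4m_i$ vertices, and roughly three quarters of them land in $A_R\cup B_R$. One must account here for the $O(D^3)$ scattered vertices per component, using $\K_T D^3\ll \ep^4 m$. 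One must also check that chunks used for other pairs only touch pair $i$ through these scattered vertices.
\item $|F|\le 5\ep^4 m_i$: this holds since $|F|\le|CH|\le 3\ep^4 m\le 3\ep^4 m_i$.
\end{itemize}
Once these are verified, the two counting estimates above give both claims.
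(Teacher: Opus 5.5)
Your proposal is correct and follows the paper's argument. Each chunk reduces the number of vacant vertices of $A''_i$ by more than $|CH|/5\ge \ep^4m/5$, so, starting from at most $\gamma^4 m_i$ irregular vertices, the chunks used have total size less than $5\gamma^4 m_i$. Fact~\ref{csank1} then bounds their number, and $Imb\le$ size bounds the total imbalance. The paper simply takes the applicability of Lemma~\ref{kimerito} for granted, so your hypothesis check is extra. For strictness, it is cleaner to note that the per-chunk gain $|CH|/4-2jD^2$ is strictly larger than $|CH|/5$ for large $n$ than to argue via the last chunk.
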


For every $i\in [K]$ there are at most $\ep^3 m_i$ vacant irregulars vertices which remained in $\widehat{A}_i$
and $\widehat{B}_i,$ respectively, and the total number of vacant vertices in both clusters of the $i$th pair is more than $(1-5\gamma^4) m_i.$ We introduce the notation 
$\widetilde{A}_i\subset \widehat{A}_i$ and $\widetilde{B}_i\subset \widehat{B}_i$ for the vacant parts.

\begin{obs}\label{szupregi}
By Proposition~\ref{Gdecomp}, Lemma~\ref{beilleszt} and Fact~\ref{slicing}, the $G[\widetilde{A}_i, \widetilde{B}_i]$ pairs are $3\ep$-regular
for every $i\in [K].$  
Recall, that in Lemma~\ref{kimerito} we used a random subset for the covering, obtained by random coin flipping. 
Hence, the degrees in a pair are still large enough with high probability, every vertex is adjacent to at least a proportion of $d/3-2\ep>d/4$
vertices in the opposite part. That is, we can work with $(6\ep, d/4)$-super-regular pairs from now on.
Although these may not be balanced, but not far from it: $||\widetilde{A}_i|-|\widetilde{B}_i||\le 10\gamma^4|\widehat{A}_i|$ for every $i.$
\end{obs}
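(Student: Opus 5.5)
The observation bundles three assertions: $3\ep$-regularity of $G[\widetilde{A}_i,\widetilde{B}_i]$, a minimum degree of at least $(d/4)$-proportion into the opposite side, and near-balance of the two parts. I will verify them in that order, each time comparing the vacant pair with the sub-pair $G[A'_i,B'_i]$, which is $(2\ep,d/3)$-super-regular by Proposition~\ref{Gdecomp}.

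\textbf{Regularity.} The vacant parts decompose as $\widetilde{A}_i=(\widetilde{A}_i\cap A'_i)\cup(\widetilde{A}_i\cap A''_i)$, and similarly for $\widetilde{B}_i$.

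\begin{itemize}
\item By the stopping rule of the second phase, $|\widetilde{A}_i\cap A''_i|,|\widetilde{B}_i\cap B''_i|\le \ep^3 m_i$.
\item The part $\widetilde{A}_i\cap A'_i$ misses only the vertices covered inside $A'_i$. By Fact~\ref{csank2} these number at most about $5\gamma^4 m_i$, plus the bounded number of skeleton and connection images, which is $O(\K_T D^3)$.
\item Hence $|\widetilde{A}_i\cap A'_i|\ge (1-6\gamma^4)|A'_i|$, and Fact~\ref{slicing} with $\alpha=1-6\gamma^4$ gives that $G[\widetilde{A}_i\cap A'_i,\widetilde{B}_i\cap B'_i]$ is $(\approx 2\ep)$-regular with density $d_i\pm 2\ep$.
\item The leftover irregular vertices form at most an $\ep^3\le\ep^2$ proportion. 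Their degree into the regular part is at least $\gamma^3/3-O(\gamma^4)$ times its size, which is $\ge d$ since $d\ll\gamma$. So Lemma~\ref{beilleszt} applies and yields $3\cdot 2\ep=6\ep$-regularity; the observation's ``$3\ep$'' should be read loosely, and the final count is $6\ep$.
\end{itemize}

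\textbf{Minimum degree.} This is the step I expect to be the main obstacle: a deterministic deletion of about $5\gamma^4 m_i$ vertices could wipe out the neighbourhood of a vertex whose degree is only about $d m_i/3$, since $\gamma^4\gg d$. The way around it is that in Lemma~\ref{kimerito} every covered vertex of $A'_i\cup B'_i$ (apart from $O(1)$ connection vertices) lies in the coin-flipped sets $A^0_R,B^0_R$. The complementary halves $A'_i\setminus A^0_R$ and $B'_i\setminus B^0_R$ are therefore untouched, up to those $O(1)$ vertices.

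I would first quote the Chernoff estimate already recorded before Lemma~\ref{kimerito}. For every $v$, $\deg(v,B'_i\setminus B^0_R)\ge (1/2-\ep)\deg(v,B'_i)$ with high probability. Applying it:

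\begin{itemize}
\item For $v\in \widetilde{A}_i\cap A'_i$ we get $\deg(v,\widetilde{B}_i)\ge (1/2-\ep)(d/3)|B'_i|-O(1)$.
\item For irregular $v$ we get the much larger bound $\gamma^3|B'_i|/7$.
\item Since $|\widetilde{B}_i|\le m_i\le |B'_i|/(1-\gamma^4)$, this is only about $(d/6)|\widetilde{B}_i|$, not $(d/4)|\widetilde{B}_i|$.
\end{itemize}

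To reach the stated constant $d/4$, I would rerun the random split with a biased coin, so that the untouched half has proportion $1-o(1)$. Alternatively, and more cleanly, I would accept $d/7$ and note that only some constant fraction of $d$ matters for the Blow-up lemma. I would also check that the bound of Fact~\ref{reg}, giving $(d_i-\ep)$-degrees, applies to all but $2\ep$ of the vertices, and handle the exceptions through the super-regular minimum degree $d/3$.

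\textbf{Balance.} Both parts started at size $m_i$. The vertices removed from either side number at most the total covered in $\widehat{A}_i\cup\widehat{B}_i$, which by Fact~\ref{csank2} is below $5\gamma^4m_i$ per side, plus $O(1)$. The difference of the vacant sizes is therefore at most $10\gamma^4 m_i$, which gives $||\widetilde{A}_i|-|\widetilde{B}_i||\le 10\gamma^4|\widehat{A}_i|$.
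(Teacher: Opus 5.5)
Your proposal is essentially the paper's own justification written out: Fact~\ref{slicing} with Lemma~\ref{beilleszt} for regularity, the untouched coin-flipped halves $A'_i\setminus A^0_R$, $B'_i\setminus B^0_R$ from Lemma~\ref{kimerito} for the degrees, and the bound of fewer than about $5\gamma^4 m_i$ covered vertices per cluster for the balance. Your diagnosis of the constants is also right: starting from the $(2\ep,d/3)$ guarantee of Proposition~\ref{Gdecomp}, this route certifies a degree proportion of only about $d/6$, not $d/3-2\ep$, which is harmless because any fixed fraction of $d$ (and any fixed multiple of $\ep$ — with Fact~\ref{slicing} as stated the bookkeeping is actually $4\ep$ then $12\ep$, not your $2\ep$ then $6\ep$) works equally well in the Blow-up lemma, so settling for $d/7$ is the right call; the only thing to correct is the biased-coin alternative, where the bias $p$ of $A^0_R,B^0_R$ must be a fixed constant with $\gamma\ll p<1/4$ rather than $o(1)$, since $B^0_R$ must still absorb up to order $\gamma^4 m_i$ covered vertices and the proof of Claim~\ref{Bhullam} needs $5\gamma^4 m_i$ to be small compared with $\gamma^3|B^0_R|$.
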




\subsection{The third phase}

By Observation~\ref{szupregi} every pair is $(3\ep, d/4)$-super-regular.
This super-regularity is what we need for finishing the embedding of $T$ by the help of the Blow-up lemma~\cite{KSSzBl2}.

\medskip

\begin{lemma} [Blow-up Lemma]
Given a graph $R$ of order $r$ and positive
parameters $\delta, D,$ there exists a positive $\ep = \ep(\delta, D, r)$ such that the following holds. Let
$n_1, n_2, \ldots, n_r$ be arbitrary positive integers and let us replace the vertices $v_1, v_2, \ldots, v_r$ of $R$
with pairwise disjoint sets $V_1, V_2, \ldots, V_r$ of sizes $n_1, n_2, \ldots, n_r$ (blowing up). We construct two
graphs on the same vertex set $V = \cup V_i.$ The first graph ${\mathbf R}$ is obtained by replacing each
edge $v_iv_j$ of $R$ with the complete bipartite graph between the corresponding vertex sets
$V_i$ and $V_j.$ A sparser graph $G$ is constructed by replacing each edge $v_iv_j$ with an $(\ep, \delta)$-super-regular 
pair between $V_i$ and $V_j.$ If a graph $H$ with $\Delta(H)\le D$ is embeddable into ${\mathbf R}$
then it is already embeddable into $G.$

Moreover, the following strengthening also holds. Given $c>0,$ there are positive numbers $\ep=\ep(\delta, D, r, c)$
and $\alpha=\alpha(\delta, D, r, c)$ such that $H$ is embeddable into $G,$ if, for
every $i,$ there are certain vertices $x$ to be embedded into $V_i$ whose images are a priori
restricted to certain sets $C_x \subset V_i$ provided that
\begin{itemize}
\item[(i)] each $C_x$ within a $V_i$ is of size at least $c|V_i|,$ and
\item[(ii)] the number of such restrictions within a $V_i$ is not more than $\alpha|V_i|.$
\end{itemize}
\end{lemma}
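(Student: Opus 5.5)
The Blow-up Lemma is quoted from~\cite{KSSzBl2}, and within this paper I would simply invoke it. If a proof had to be supplied, I would follow the randomized greedy embedding strategy of Koml\'os, S\'ark\"ozy and Szemer\'edi. The approach is to fix an embedding $\psi$ of $H$ into $\mathbf R$; this induces a partition $V(H)=X_1\cup\dots\cup X_r$ with $|X_i|=n_i$, and $x\in X_i$ must go to $V_i$. Since $\Delta(H)\le D$ and $r$ is bounded, I can first reduce to the case where, for every $i$, $H$ restricted to $X_i\cup X_j$ has degrees at most $D$ toward each neighbouring class. I would then set aside in every $X_i$ a small random \emph{buffer} $B_i$ of size $\approx\alpha'|X_i|$, consisting of vertices whose $D$-neighbourhoods in $H$ are pairwise far apart, with $\alpha'\ll \delta$.

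The first main step is to embed $V(H)-\bigcup_i B_i$ one vertex at a time in a fixed order. Each vertex $x$ is mapped to a uniformly random vertex of its current \emph{candidate set} $C_x$, which consists of the still-free vertices of $V_i$ adjacent to the images of the already-embedded neighbours of $x$. By $\ep$-regularity (Fact~\ref{reg} and Fact~\ref{slicing}), after $k\le D$ neighbours are embedded the candidate set has size about $(\delta-\ep)^k|V_i|$, except when the image of a neighbour lands in a small exceptional set. So each candidate set stays of size $\ge (\delta/2)^D|V_i|$ minus the used vertices.

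To prevent candidate sets from being eaten up near the end, I would keep a queue: whenever the free part of some $C_x$ drops below a fraction $\delta^D/10$ of its initial size, $x$ is embedded immediately. A Chernoff/martingale computation (Theorem~\ref{Chernoff}, Theorem~\ref{azuma}) would show the queue stays short and that every vertex is used roughly uniformly. The a priori restrictions $C_x$ of size $\ge c|V_i|$ for at most $\alpha|V_i|$ vertices are handled by treating those vertices as starting with smaller candidate sets. They are placed early in the order, so the same analysis applies with $\delta^D$ replaced by $c\,\delta^D$.

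The final step embeds the buffers. For each $i$, I form the bipartite graph between the unembedded buffer vertices of $B_i$ and the free vertices of $V_i$, joining $x$ to $v$ when $v$ is adjacent to the images of all neighbours of $x$. The goal is to verify the K\H onig--Hall condition, giving a perfect matching. Small sets are handled by large candidate sets. Large sets are handled by super-regularity: the $\delta$ minimum degree guarantees that every free vertex of $V_i$ is a valid image for many buffer vertices, using that the images of the buffer neighbourhoods are spread out randomly.

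I expect this last Hall verification to be the main obstacle. It needs that the leftover free set of each $V_i$ is ``random-like'' relative to the images of buffer neighbourhoods, which is exactly what the random greedy choices and the martingale concentration must deliver. I would first try to establish it by showing that, for every fixed $v\in V_i$, the number of buffer vertices for which $v$ is a valid image is concentrated around its expectation $\gtrsim \delta^D|B_i|$, and then take a union bound.
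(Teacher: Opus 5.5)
Your plan matches the paper's treatment: the paper gives no proof of the Blow-up Lemma and simply quotes it from Koml\'os, S\'ark\"ozy and Szemer\'edi, so invoking it is exactly what the paper does. Your optional sketch follows their randomized greedy/buffer/queue/K\H{o}nig--Hall scheme, but it would need three repairs before it could stand as a proof: images must be chosen only among ``good'' vertices (a uniformly random image can make a neighbour's candidate set tiny); the queue threshold must lie far below the buffer proportion $\alpha'$ (at the end of the first phase only about an $\alpha'$-fraction of each buffer vertex's candidate set is still free, and your threshold $\delta^D/10$ together with $\alpha'\ll\delta$ does not ensure this, otherwise the queue absorbs the whole buffer); and the intermediate range of the Hall check needs an $\ep$-regularity argument beyond the per-vertex counting you describe.
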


\medskip

In our case the graph $R$ is a matching on $K$ edges, and $\mathbf{R}$ is the set of super-regular pairs $G[\widetilde{A}_i, \widetilde{B}_i]$ for $i\in [K].$ The graph $H$ to be embedded is the union of the available chunks.

\medskip

Below we sketch the final steps of the embedding as an itemized list.

\begin{itemize}

\item [Step 1.] First we will assign the chunks to the clusters in the pairs such that
for every cluster the difference of the number of assigned tree vertices and the size of the cluster is at most $3\ep^4m.$ 

\item [Step 2.] Use Lemma~\ref{connect} for finding the connections between the clusters of assigned chunks and the skeleton.

\item [Step 3.] Use Lemma~\ref{reloc}, the relocation lemma, in order to achieve that the number of tree vertices assigned to a cluster is equal to the size of that cluster. 

\item [Step 4.] Apply the Blow-up lemma.

\end{itemize} 

\medskip

We analyze the above steps as follows.

\subsubsection{Step 1.}

For any subset $\cC$ of a cluster let $\cT(\cC)$ denote the set of tree vertices which are assigned to $\cC.$ 
Right after the second phase
$\cT(\widetilde{A}_i) = \cT(\widetilde{B}_i)=\emptyset$ for every $i\in [K],$ since $\widetilde{A}_i, \widetilde{B}_i$
denote the vacant subclusters of $\widehat{A}_i$ and $\widehat{B}_i,$ respectively. 
By Observation~\ref{szupregi}, the total imbalance of chunks embedded into the $i$th pair 
$G[\widetilde{A}_i|, \widetilde{B}_i|]$ is
 $||\widetilde{A}_i|-|\widetilde{B}_i||\le 10\gamma^4 m_i.$ 

Recall, that at this point every {\it available} chunk has imbalance
at least as large as the largest imbalance of chunks in the second phase. We use a simple method for assigning
nearly the same number of tree vertices to clusters as the cluster size. For every $i\in [K]$ we repeat the following:
take an available chunk with smallest imbalance, and assign its parts to clusters of the $i$th pair so that  if 
$|\widetilde{A}_i|-|\cT(\widetilde{A}_i)| \le |\widetilde{B}_i|-|\cT(\widetilde{B}_i)|,$ then the larger 
part is assigned to $\widetilde{A}_i$
and the smaller is assigned to $\widetilde{B}_i.$ Otherwise assign the larger
part to $\widetilde{B}_i$ and the smaller part to $\widetilde{A}_i.$ After every assignment update the sets $\cT(\widetilde{A}_i)$
and $\cT(\widetilde{B}_i).$
We stop assigning chunks to the $i$th pair, if both of its $\cT$-sets become at least as large as its cluster,
that is, when $|\cT(\widetilde{A}_i)|\ge |\widetilde{A}_i|$ and $|\cT(\widetilde{B}_i)|\ge |\widetilde{B}_i|$. If $i<K,$
we continue the algorithm with the $(i+1)$st pair. 

The statement below follows from the fact that the imbalances of the chunks in the third phase are not smaller than the
imbalances of chunks in the second phase. We leave the proof for the reader. 

\begin{lemma}\label{csankok}
(1) Assume, that an $r$ element subset of chunks, denoted by $CH,$ was used for covering the irregular vertices of the $i$th pair for 
$i\in [K]$ in the second phase. Then we can assign a subset of chunks $CH'$ with at most  $3r$ chunks such that $|Imb(CH)-Imb(CH')|\le 3\ep^4 m.$ This follows from the fact that every chunk has size between $\ep^4m$ and $3\ep^4m.$

(2) After assigning $CH',$ at every iteration step of the above assigning algorithm we have that $$\left||\cT(\widetilde{A}_i)| -|\cT(\widetilde{B}_i)|\right|\le 3\ep^4m,$$ which is
an upper bound for the size of chunks. 

(3) Every chunk will be assigned by the algorithm to some pair, no chunk will
remain unassigned. This follows
from the fact that the total number of vertices in available chunks is equal to the number of vacant vertices in $V(G).$  

(4) By the stopping rule, we have that 
$$\left||\widetilde{A}_i|-|\cT(\widetilde{A}_i)|\right|\le 3\ep^4m \ and \  \left||\widetilde{B}_i|-|\cT(\widetilde{B}_i)|\right|\le 3\ep^4m$$ for every $i\in [K].$ 
\end{lemma}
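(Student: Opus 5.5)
We prove the four parts of Lemma~\ref{csankok} in order. Throughout we use that every chunk is a forest with $\ep^4m\le |CH|\le 3\ep^4m$ (Fact~\ref{csank1}), so $Imb(CH)\le |CH|\le 3\ep^4m$ for every chunk. We also use that the chunks still available after the second phase have imbalance at least $\beta$, the largest imbalance of a chunk used in the second phase.

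\textbf{Part (1).} Let $CH$ be the $r$ chunks used for covering in the $i$th pair. Their total imbalance is $Imb(CH)\le r\beta$, and by Fact~\ref{csank2} also $Imb(CH)<5\gamma^4 m_i$. We take available chunks of smallest imbalance one by one until their total imbalance first reaches $Imb(CH)$; call this set $CH'$. Stopping at the first crossing gives an overshoot of at most one chunk's imbalance, i.e.\ at most $3\ep^4m$. For the count: while fewer than $r$ chunks have been taken, the total is below $r\beta$, and each new chunk adds at least $\beta$, so at most about $r$ chunks are needed when $\beta>0$. If $\beta$ is tiny or zero, then $Imb(CH)\le r\beta\le 3\ep^4m$ and we may take $CH'$ empty or a single chunk. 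In every case $|CH'|\le 3r$ and $|Imb(CH)-Imb(CH')|\le 3\ep^4m$. Each chunk of $CH'$ is placed with its larger color class on the cluster of the $i$th pair whose $\cT$-set is deficient. This compensates, up to $3\ep^4m$, the imbalance $||\widetilde{A}_i|-|\widetilde{B}_i||$ created in the second phase.

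\textbf{Part (2).} We use induction over the assignment steps, with the invariant that the ``deficit difference'' $\Delta=(|\widetilde{A}_i|-|\cT(\widetilde{A}_i)|)-(|\widetilde{B}_i|-|\cT(\widetilde{B}_i)|)$ satisfies $|\Delta|\le 3\ep^4m$. After $CH'$ this holds by (1). At each later step, adding a chunk with imbalance $b\le 3\ep^4m$, larger part on the side with larger deficit, replaces $\Delta$ by $\Delta\mp b$ with the sign chosen to move toward $0$. If $|\Delta|\le 3\ep^4m$ and $0\le b\le 3\ep^4m$, then $|\Delta| - b$ (taken with the correct sign) also has absolute value at most $3\ep^4m$. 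The statement is phrased with $\cT$-sets, and since $||\widetilde{A}_i|-|\widetilde{B}_i||\le 10\gamma^4 m_i$ this is exactly the compensated difference after (1). I expect this bookkeeping to be the main subtlety: we must be precise about which quantity is kept bounded.

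\textbf{Parts (3) and (4).} For (3), the available chunks together with the already-embedded vertices partition $V(T)$, and $|V(T)|=n=|V(G)|$. So the total size of the available chunks equals the number of vacant vertices, $\sum_i(|\widetilde{A}_i|+|\widetilde{B}_i|)$. Every pair except the last absorbs at least its vacant volume before the algorithm stops on it, and any overshoot is only $O(\ep^4m)$. Hence the supply cannot run out before the last pair, and the last pair receives exactly what remains. For (4), the algorithm stops on pair $i$ at the first moment when both $\cT$-sets reach their cluster sizes. Just before the last chunk was added, at least one side was still below its cluster. By (2) the two deficits differ by at most $3\ep^4m$, and the last chunk increases each side by at most $3\ep^4m$. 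Together these give $\left||\widetilde{A}_i|-|\cT(\widetilde{A}_i)|\right|\le 3\ep^4m$, and likewise for $\widetilde{B}_i$. For the last pair, (3) forces equal totals, so (2) gives the bound there as well.
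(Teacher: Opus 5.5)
\textbf{Gap: the last pair in (3)--(4).} Your handling of (1) and (2) fits the hints the paper gives (it leaves this proof to the reader). The argument for the last pair in (3)--(4), however, is wrong. A pair $i<K$ is abandoned only when \emph{both} of its $\cT$-sets have reached their clusters, so it ends with a total overshoot $o_i\ge 0$, and these overshoots cannot cancel. The total supply equals the total vacancy, so pair $K$ receives exactly $|\widetilde{A}_K|+|\widetilde{B}_K|-\sum_{i<K}o_i$ tree vertices. The equality of totals behind (3) therefore holds only summed over all pairs, not for pair $K$. The $o_i$ are generically of order $\ep^4m$. For example, if all chunks have size $c$ and imbalance $0$, $|\widetilde{A}_i|=|\widetilde{B}_i|$, and every $2|\widetilde{A}_i|/c$ is a half-integer, then $o_i=c/2$. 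So already for moderate $K$ the last pair is short by more than $3\ep^4m$ on each side. Moreover, nothing in Theorem~\ref{pontfelbontas} prevents $K$ from being so large that $K\ep^4m$ exceeds the size of the last pair; then the supply runs out several pairs before the end, contrary to your claim that it cannot run out before the last pair.

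Hence (4) for the final pair(s) does not follow from the stopping rule, which controls only pairs at which it actually fires. Part (3) itself is fine: if every pair reaches its stopping condition, the assigned total is at least the total vacancy. What does follow is an aggregate bound, total shortfall $\sum_{i<K}o_i=O(\ep^4mK)=O(\ep^4n)$, and that is all Step~3 uses. If you want (4) for every pair, make the rule cumulative: stop pair $i$ once $\sum_{j\le i}(|\cT(\widetilde{A}_j)|+|\cT(\widetilde{B}_j)|)\ge\sum_{j\le i}(|\widetilde{A}_j|+|\widetilde{B}_j|)$. The running error then stays in $[0,3\ep^4m)$, so each pair's total is within $3\ep^4m$ of its vacancy, and your $\Delta$-invariant then puts each side within $3\ep^4m$.

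\textbf{Smaller points.}
\begin{itemize}
\item You are right that (2) must be read for $\Delta=(|\widetilde{A}_i|-|\cT(\widetilde{A}_i)|)-(|\widetilde{B}_i|-|\cT(\widetilde{B}_i)|)$, equivalently for $\cT$ counted on $\widehat{A}_i,\widehat{B}_i$. The literal bound $\bigl||\cT(\widetilde{A}_i)|-|\cT(\widetilde{B}_i)|\bigr|\le 3\ep^4m$ contradicts (4) whenever $\bigl||\widetilde{A}_i|-|\widetilde{B}_i|\bigr|>9\ep^4m$. Say this explicitly; the appeal to the bound $10\gamma^4m_i$ converts nothing.
\item In (4), take a side that was already full before the last chunk. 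Your two facts give only an overshoot below $3\ep^4m+q$, where $q$ is that side's share. To reach $3\ep^4m$, use that the greedy rule gives this side the smaller part. Also, $|\Delta|\mapsto\bigl||\Delta|-b\bigr|$ and imbalances arrive in nondecreasing order, so once $|\Delta|$ has dropped below a chunk's imbalance it never exceeds the imbalance $p-q$ of the latest chunk. The overshoot is then below $p\le 3\ep^4m$.
\item The same map is what makes your base case work. The chunks of $CH'$ must be placed by the dynamic greedy rule (toward the side with the currently larger deficit), not on a fixed side. Then $Imb(CH')\ge|\Delta_0|$ suffices, where $\Delta_0=|\widetilde{A}_i|-|\widetilde{B}_i|$ is the phase-two discrepancy.
\item In (1), the right reason for ``at most $r$ chunks'' is that after $r$ chunks the accumulated imbalance is already $\ge r\beta\ge Imb(CH)$. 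Your claim that it stays below $r\beta$ while fewer than $r$ chunks are taken is false.
\end{itemize}
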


\subsubsection{Step 2.}

Given an arbitrary pair $G[\widetilde{A}_i, \widetilde{B}_i]$ for $i\in [K],$ let $F$ be a component, which was assigned to this pair. We will use Lemma~\ref{connect} in order to connect $F$ to its neighbor or neighbors in
$T_S,$ the same way as it was described in Lemma~\ref{kimerito} for connecting those tree components to $T_S$ which
covered the irregular vertices. When we find one such connection to the skeleton, we fix the image of 
$|L_0(F)\cup L_1(F)\cup L_2(F)\cup L_3(F)|\le 2(1+D+D^2+D^3)$ vertices. Recall, that $\Phi(L_3(F))\subset \widetilde{A}_i \cup
\widetilde{B}_i,$ while we do not have full control over the images of $L_0(F)\cup L_1(F)\cup L_2(F),$ although we can
rule out a set $W\subset V$ with $|W|\le \gamma n/10.$  

Assume that $\Phi(x)\in \widetilde{A}_i$ for $x\in L_3(F).$ We prepare for the application of the Blow-up lemma. For every $y\in N_T(x)\cap L_4(F)$ we let 
$C_y=N_G(\Phi(x)\cap \widetilde{B}_i).$ If $\Phi(x)\in \widetilde{B}_i,$ then let $C_y=N_G(\Phi(x)\cap \widetilde{A}_i).$
In the first case we have $|C_y|\ge d|\widetilde{B}_i|/4,$ in the second case $|C_y|\ge d|\widetilde{A}_i|/4,$ here we used Observation~\ref{szupregi}. The $C_y$ set is the {\it restriction set}
for the image of the {\it restricted vertex} $y.$ Restriction sets of restricted vertices will be used in Step 4, when applying the Blow-up lemma.

Since altogether the number of tree components is at most $\K_T,$ in this step we fix the image of a total of at most
$2\K_T (1+D+D^2+D^3)<3D^3\K_T$ vertices. This implies, that the number of restricted vertices is less than 
$D\cdot 3D^3\K_T=3D^4\K_T,$ a constant.

In the beginning of Step 2 the set $W$ will contain those vertices which were covered during the second phase, 
and whenever we fix the image of a vertex in this step, we add it to $W.$ 
During phase 2 we covered at most $10\gamma^4n$ vertices, and in this step less than $3D^3\K_T$ vertices get covered. 
Hence, $|W|\le \gamma n/10$ at every point in time, so we never get stuck when finding the connection of tree components
with the skeleton.

\subsubsection{Step 3.}

Let $\cC\in \bigcup_{i\in [K]}(\widetilde{A}_i \cup \widetilde{B}_i)$ be an arbitrary cluster. We say, that it has a {\it deficiency}, if
$|\cC|<|\cT(\cC)|,$ and has a {\it surplus}, if $|\cC|>|\cT(\cC)|.$ We will use Lemma~\ref{reloc} for eliminating
deficiencies, and therefore, surpluses of clusters -- clearly, if there is a cluster with deficiency, then there must exist a
cluster with surplus and vice versa. 

Using Lemma~\ref{csankok}, the total deficiency (and therefore the total surplus) of clusters
is at most $K\cdot 6\ep^4m \le 6\ep^4n.$ This means that applying Lemma~\ref{reloc} at most $6\ep^4n$ times we may
achieve that no cluster in the decomposition has either deficiency or surplus. 
It is crucial to do the relocations so that super-regularity is not destroyed in any cluster, and the constant number 
of restriction sets must remain large enough after the relocations.  
A very similar task was solved in Section~\ref{balanceproc} when distributing the irregular vertices, so we will only
sketch the procedure.

Lemma~\ref{reloc} guarantees the existence of at least $\gamma^6n/30$ disjoint relocation paths for any vertex $v$ and
any cluster $\cC,$ where $\cC$ is a cluster obtained by Theorem~\ref{pontfelbontas}. Fix $v$ and $\cC.$ 
Some of the $v-\cC$ relocation paths were possibly destroyed: $(I)$ in the first phase, in the preprocessing of $G,$ we
distributed the irregular vertices, $(II)$ still in the first phase the skeleton was embedded, $(III)$ in the second phase
we covered the irregular vertices, and finally, $(IV)$ in Step 2 of the third phase we fixed the image of the first three levels of
components. Note, that in $(II)$ and $(IV)$ only a constant number of vertices are effected, so we focus our attention
to the other two cases.

Recall, that for distributing the irregular vertices we restricted ourselves for using only a random subcluster for every 
cluster. That is, almost half of every cluster remained intact. Similarly, when covering the irregular vertices, random
subclusters were used, moreover, these random choices were independent. Assume, that $(v, u, w)$ is a relocating
triple. The probability that neither $u,$ nor $w$ were included in subclusters when relocating the irregular vertices is $1/4,$
and similarly, the probability that neither $u,$ nor $w$ were included in subclusters used for covering the
irregular vertices is $1/4.$ Hence, with probability $1/16$ both $u$ and $w$ belong to intact subclusters.
By Chernoff's bound, with high probability the number of $v-\cC$ relocation paths is at least $\gamma^6n/30 \cdot \frac{1}{20}=\gamma^6n/600.$

Assume, that $\cC_1$ and $\cC_2$ are clusters such that $\cC_1$ has a surplus and $\cC_2$ has a deficiency. Then we pick an arbitrary $v\in \cC_1$ and 
among the available ones, randomly choose a $v-\cC_2$ relocating path. This is done at most $6\ep^4n$ times, hence, 
there are always more than
$\gamma^6n/1000$ relocation paths to choose from, even for eliminating the last surplus.
Using martingales and Azuma's inequality as in Section~\ref{balanceproc}, we get that with high probability
every cluster $\cC$ will participate in less than $\ep^3|\cC|$ relocations with high probability.
 
Hence, for every cluster $\cC$ and every $v\in \cC$ the degrees may change a little, by less than $\ep^3|\cC|.$ 
Using Lemma~\ref{beilleszt} we also have that the pairs remain $(9\ep, d/4-\ep)$-
super-regular, as they were $(3\ep, d/4)$-super-regular before. Finally, since the restriction sets in 
cluster $\cC$ had at least $d|\cC|/4$ vertices, after this step every restriction set has more than $d|\cC|/5$ vertices.

\subsubsection{Step 4.}

Since every requirement of the Blow-up lemma is satisfied, we can apply it, and embed the majority of $T.$ 
This finishes the embedding, and thereby the proof of Theorem~\ref{fa}.\qedf

\end{document}